\definecolor{ududff}{rgb}{0.30196078431372547,0.30196078431372547,1}
\newtheorem{theorem}{Theorem}[section]
\newtheorem{lemma}[theorem]{Lemma}
\newtheorem{proposition}[theorem]{Proposition}
\newtheorem*{ThmA}{Theorem A}
\newtheorem*{ThmB}{Theorem B}
\newtheorem*{ThmC}{Theorem C}
\newtheorem*{CorD}{Corollary D}
\theoremstyle{definition}
\newtheorem{rem}[theorem]{Remark}
\newenvironment{enumeratei}{\begin{enumerate}[\upshape (a)]}
    {\end{enumerate}}
\def\irr#1{{\rm Irr}(#1)}
\def\cent#1#2{{\bf C}_{#1}(#2)}
\def\syl#1#2{{\rm Syl}_#1(#2)}
\def\nor{\trianglelefteq\,}
\def\oh#1#2{{\bf O}_{#1}(#2)}
\def\zent#1{{\bf Z}(#1)}
\def\V#1{{V}(#1)}
\def\sbs{\subseteq}
\def\fit#1{{\bf F}(#1)}
\def\frat#1{{\bf \Phi}(#1)}
\newcommand{\F}{{\mathbb F}}
\def\irr#1{{\rm Irr}(#1)}
\def\cent#1#2{{\bf C}_{#1}(#2)}
\def\syl#1#2{{\rm Syl}_#1(#2)}
\def\nor{\trianglelefteq\,}
\def\norm#1#2{{\bf N}_{#1}(#2)}
\def\oh#1#2{{\bf O}_{#1}(#2)}
\def\zent#1{{\bf Z}(#1)}
\def\sbs{\subseteq}
\def\fit#1{{\bf F}(#1)}
\def\lay#1{{\bf E}(#1)}
\def\fitg#1{{\bf F^*}(#1)}
\def\SL#1{{\rm SL}_{2}(#1)}
\def\PSL#1{{\rm PSL}_{2}(#1)}
\def\V#1{{\rm V}(#1)}
\def\E#1{{\rm E}(#1)}
\def\irr#1{{\rm Irr}(#1)}
\def\cd#1{{\rm cd}(#1)}
\def\cent#1#2{{\bf C}_{#1}(#2)}
\def\syl#1#2{{\rm Syl}_#1(#2)}
\def\oh#1#2{{\bf O}_{#1}(#2)}
\def\zent#1{{\bf Z}(#1)}
\def\ker#1{{\rm ker}(#1)}
\def\norm#1#2{{\bf N}_{#1}(#2)}
\def\sbs{\subseteq}
\def \o#1{\overline{#1}}
\mathchardef\coso="2023
\def \nq{\mathcal{N}_q}
\begin{document}

\title[Degree graphs with a cut-vertex. I]{Non-solvable groups whose character degree graph has a cut-vertex. I}

\author[]{Silvio Dolfi}
\address{Silvio Dolfi, Dipartimento di Matematica e Informatica U. Dini,\newline
Universit\`a degli Studi di Firenze, viale Morgagni 67/a,
50134 Firenze, Italy.}
\email{silvio.dolfi@unifi.it}

\author[]{Emanuele Pacifici}
\address{Emanuele Pacifici, Dipartimento di Matematica e Informatica U. Dini,\newline
Universit\`a degli Studi di Firenze, viale Morgagni 67/a,
50134 Firenze, Italy.}
\email{emanuele.pacifici@unifi.it}

\author[]{Lucia Sanus}
\address{Lucia Sanus, Departament de Matem\`atiques, Facultat de Matem\`atiques, \newline Universitat de Valencia, Burjassot,
46100 Valencia, Spain.}
\email{lucia.sanus@uv.es}

\author[]{Victor Sotomayor}
\address{Victor Sotomayor, Departamento de Matematicas y Estadistica,\newline
Centro Universitario EDEM, La Marina de Valencia s/n,
46024 Valencia, Spain.}
\email{vsotomayor@edem.es}

\thanks{The research of the first and second authors is partially supported by INDAM-GNSAGA. The research of the third and fourth authors is supported by Proyecto AICO/2020/298, Generalitat Valenciana (Spain). The third author is also partially supported by the Spanish Ministerio de Ciencia e Innovaci\'on PID2019-103854GB-I00.
The fourth author is also supported by Proyecto PGC2018-096872-B-I00 (MCIU/AEI/FEDER, UE), and he acknowledges the financial support of Centro Universitario EDEM (Valencia). This research has been carried out during a visit of the third and fourth authors at the Dipartimento di Matematica e Informatica ``Ulisse Dini" (DIMAI) of Universit\`a degli Studi di Firenze. They thank DIMAI for the hospitality.}

\subjclass[2020]{20C15}

\begin{abstract} 
Let $G$ be a finite group. Denoting by $\cd{G}$ the set of degrees of the irreducible complex characters of $G$, we consider the \emph{character degree graph} of \(G\): this is the (simple undirected) graph whose vertices are the prime divisors of the numbers in $\cd{G}$, and two distinct vertices $p$,  $q$ are adjacent if and only if $pq$ divides some number in $\cd{G}$. In the series of three papers starting with the present one, we analyze the structure of the finite non-solvable groups whose character degree graph possesses a \emph{cut-vertex}, i.e. a vertex whose removal increases the number of connected components of the graph.
\end{abstract}

\maketitle

\section{Introduction}
Given a finite group $G$, let $\cd{G} = \{ \chi(1) \mid \chi \in \irr{G} \}$ be the set of the degrees of the irreducible
complex characters of $G$. This  set has been extensively studied, on its own account as well as in relation to the structure of the group $G$.
The \emph{character degree graph} (or \emph{degree graph}) $\Delta(G)$ is the simple undirected graph related to the set
$\cd{G}$  as follows: the vertex set $\V G$ of $\Delta(G)$ consists of all the prime numbers dividing the degree of some irreducible character of
$G$, and two distinct vertices \(p\), \(q\) are adjacent in $\Delta(G)$ if and only if the product $pq$ divides some irreducible character degree of $G$.
Several results in the literature illustrate that  graph-theoretical features of $\Delta(G)$ are significantly linked to
the structure of $G$. For a detailed account on this subject, we refer  to the expository paper~\cite{L_survey}. 

As an important example, we mention that the number of connected components of $\Delta(G)$ is in general at most three
(\cite{MSW}), and at most two if $G$ is solvable.  
The structure of the finite groups whose degree graph is disconnected is described in~\cite{L} for the solvable case, and 
in \cite{LW} for the non-solvable case.

As a further step in the study of the connectivity properties of the degree graph, it is natural to  consider the question about the existence
of a \emph{cut-vertex}, i.e. a vertex whose removal yields a resulting graph with more connected components than the original one. A graph that is connected and has a cut-vertex is said to have   \emph{connectivity degree $1$}, which is  the smallest degree of connectivity of a connected graph. 
The finite solvable groups $G$ such that $\Delta(G)$ has connectivity degree $1$ are investigated in \cite{LM}; among other things,
it is shown (\cite[Theorem 1.1]{LM}) that in this case $\Delta(G)$ has a unique cut-vertex. 

We also mention that the finite groups \(G\) whose degree graph \(\Delta(G)\) has connectivity degree $1$, and which satisfy the strong assumption (studied in  \cite{ACDPS}, and implying non-solvability)
that the complement graph of $\Delta(G)$ is non-bipartite, are described in \cite{EKM}. 

In the series of three papers starting with the present one, we give a  complete classification of all finite \emph{non-solvable} groups whose degree graph has a cut-vertex
(see Theorems A, B and C in Section~2). It turns out that the structure of the relevant groups, as well as of the corresponding degree graphs,
is significantly restricted and, quite surprisingly, does not fall too far from  the structure of the finite non-solvable groups
with a disconnected degree graph (see \cite{LW} or Theorem~\ref{LewisWhite}).
As an  interesting feature, we mention that in all cases (similarly to the situation for solvable groups) a 
degree graph $\Delta(G)$ with connectivity degree $1$ has a unique cut-vertex (Corollary~D). 

The structure of the finite non-solvable groups whose character degree graph has connectivity degree $1$ can be generally described as
follows. Such a group $G$ has a unique non-solvable composition factor $S$, belonging to the following short list of isomorphism types:
${\rm{PSL}}_2(t^a)$, ${\rm{Sz}}(2^a)$, ${\rm{PSL}}_3(4)$,   ${\rm{M}}_{11}$, ${\rm{J}}_1$; moreover, if  
$S \not\cong {\rm{PSL}}_2(t^a)$, then  $G$ has a (minimal) normal subgroup isomorphic to $S$ (Theorem~\ref{main}). Finally, denoting by $R$ the solvable radical of $G$, the vertex set of $\Delta(G)$ consists exactly of the  prime divisors of the
order of the almost-simple group $G/R$ and, if not already there, the cut-vertex of $\Delta(G)$ (Theorem~\ref{0.2}).  
We also remark that in all cases, except  possibly when the non-abelian  simple section $S$ is isomorphic to the Janko group ${\rm{J}}_1$,
the cut-vertex $p$ of $\Delta(G)$ is a complete vertex (i.e. it is adjacent to all other vertices) of $\Delta(G)$; moreover, the graph
$\Delta(G) - p$ obtained from $\Delta(G)$ by removing the vertex $p$ (and all the edges incident to $p$) has exactly two
connected components, which are complete graphs. Rather remarkably, one of them consists of a single vertex, and hence our
results give also a description of the finite non-solvable groups $G$ such that $\Delta(G)$ has a \emph{cut-edge}, i.e. an edge whose removal increases the number of connected components of the graph.  

In Section~\ref{results} of this paper we present the complete statement of our main results Theorem~A,  Theorem~B and Theorem~C, as well as a discussion concerning the degree graphs associated to the relevant groups with some figures and comments. After another preliminary section, 
in Section~\ref{proofmain} we  prove Theorem~A  under the assumption that the non-abelian simple section $S$ of the group is not isomorphic to a projective special linear group ${\rm PSL}_2(t^a)$.
The case $S \cong {\rm PSL}_2(t^a)$  requires a much longer and rather technical analysis, which will be carried out in the
forthcoming papers~\cite{DPS2} and~\cite{DPS3}, where the proof of Theorem~A and Theorem~B will be completed. 
Finally, in Section~\ref{proofC} we prove Theorem~C, which provides the  classification the finite non-solvable groups $G$ such that $\Delta(G)$
is not connected and has a cut-vertex, and that is rather directly derived from the classification in~\cite{LW}.  

We close this introductory section by recalling that results about the degree graph of finite groups often have a counterpart concerning the prime graph built on the set of \emph{conjugacy class sizes} of groups. The analogue of the problem here considered is studied in \cite{DPSV}, where a classification of the finite groups whose prime graph on conjugacy class sizes has a cut-vertex is provided. However, the situation in that context turns out to be significantly different from the present one, since the relevant groups turn out to be solvable of Fitting heigth at most three and, moreover, there are cases in which the relevant graphs have two cut-vertices.

Not surprisingly, in this paper we use results that depend on the classification of the finite simple groups. Every group considered in the following discussion is tacitly assumed to be a finite group.

\section{The classification}\label{results}
Given a group $G$, we denote by $R = R(G)$ the \emph{solvable radical} (i.e. the largest solvable normal subgroup), and
by $K = K(G)$ the \emph{solvable residual} (i.e. the smallest normal subgroup with a solvable factor group) of $G$. Equivalently, $K(G)$ is the last term of the derived series of $G$. Also, as customary, $\pi(n)$ denotes the set of all prime divisors of a positive integer $n$, and we write $\pi(G)$ for $\pi(|G|)$.

This section is entirely dedicated to presenting the statement of the main theorems (Theorem~A, Theorem~B and Theorem~C), the proof of which begins in the present paper as specified in the Introduction. The first two theorems treat the connected case, and can be regarded as two parts of the same result (in fact, they are stated separately only to improve clarity of the exposition and readability): the former deals with groups with no composition factors isomorphic to \(A_5\), whereas the latter covers the complementary situation. On the other hand, the disconnected case is analyzed in Theorem~C. The rest of the section also presents the associated degree graphs, with some figures and comments.

In order to clarify the statements we also mention that, for $H =  {\rm SL}_2(t^a)$ (where $t$ is a prime),  an $H$-module $V$
over the field \(\mathbb{F}_t\) with \(t\) elements is called \emph{the natural module for $H$} if $V$ is isomorphic to the standard module for ${\rm SL}_2(t^a)$ (or any of its Galois conjugates)  seen as an $\mathbb{F}_t[H]$-module.
We will freely use this terminology also referred to the conjugation action of a group on a suitable elementary abelian normal subgroup.

\begin{ThmA}
  Let $G$ be a non-solvable group with no composition factors isomorphic to \(\SL{4}\cong A_5\), let $R$ and $K$ be, respectively,
 the solvable radical and the solvable residual  of \(G\), and let \(p\) be a prime number.
  Then the graph $\Delta(G)$ is connected and has  cut-vertex $p$,  if and only if $G/R$ is an almost-simple group, $\V G = \pi(G/R)\cup\{p\}$ and, denoting by \(S\) the socle of \(G/R\), one of the following holds.
\begin{enumeratei}
\item {\emph{$S$ is isomorphic to ${\rm{Sz}}(2^a)$.}} In this case, \(K\cong S\) is a minimal normal subgroup of \(G\),
 $a\geq 3$ is a prime, \(p=2^a-1\) and $\V{G/K}\sbs \{p\}$.
\item {\emph{$S$ is isomorphic to ${\rm{PSL}}_3(4)$.}} In this case, \(K\cong S\) is a minimal normal subgroup of \(G\),
  $|G:KR|\in\{1,3\}$, \(p=5\) and $\V{G/K}\sbs \{5\}$.
\item {\emph{$S$ is isomorphic to ${\rm{M}}_{11}$.}} In this case, \(K\cong S\) is a minimal normal subgroup of $G=K\times R$, \(p=5\) and $\V R\sbs\{5\}$.
\item {\emph{$S$ is isomorphic to ${\rm{J}}_1$}}. In this case, \(K\cong S\) is a minimal normal subgroup of $G=K\times R$, \(p=2\) and $\V R\sbs\{2\}$.
\item {\emph{$S$ is isomorphic to $\PSL{t^a}$, where \(t\) is an odd prime and \(t^a>5\).}} In this case $|G/KR|$ is not a multiple of $t$,  $p\neq t$, and one of the following holds.
\begin{enumeratei}
\item[{\rm{(i)}}] \(K\) is isomorphic to \(S\cong\PSL{t^a}\) or to \(\SL{t^a}\), and $\V{G/K}=\{p\}$.
\item[{\rm{(ii)}}] \(K\) contains a minimal normal subgroup \(L\) of \(G\) such that  \(K/L\) is isomorphic to \(\SL{t^a}\) and \(L\) is the natural module for \(K/L\); moreover, $\V{G/K}=\{p\}$.
 \item[{\rm{(iii)}}] \(t^a=13\) and $p=2$. \(K\) contains a minimal normal subgroup \(L\) of \(G\) such that \(K/L\) is isomorphic to \(\SL{13}\), and \(L\) is one of the two \(6\)-dimensional irreducible modules for \(\SL{13}\) over \(\F_3\). Moreover, $\V{G/K}\subseteq\{2\}$.
 \end{enumeratei}
 \item {\emph{$S$ is isomorphic to $\SL{2^a}$, where \(a> 2\)}}, and one of the following holds.
  \begin{enumeratei}
 \item[{\rm{(i)}}] \(K\cong S\) is a minimal normal subgroup of \(G\); also, \emph{either} $\V{G/K}\cup\pi(G/KR)=\{2\}$ and $p=2$, \emph{or} $G/KR$ has odd order, $p\neq 2$ and $\V{G/K}=\{p\}$.
\item[{\rm{(ii)}}] \(K\) contains a minimal normal subgroup \(L\) of \(G\) such that \(K/L\) is isomorphic to \(\SL{2^a}\) and \(L\) is the natural module for \(K/L\); also, $G/KR$ has odd order, $p\neq 2$, $\V{G/K}=\{p\}$ and, for a Sylow $2$-subgroup $T$ of $G$, we have $T'=(T\cap K)'$.
\end{enumeratei}
\end{enumeratei}
In all cases except in case~{\rm{(d)}} when \(R\) is abelian, \(p\) is is a complete vertex of $\Delta(G)$; furthermore, in all cases, $p$ is the unique cut-vertex of \(\Delta(G)\). 
\end{ThmA}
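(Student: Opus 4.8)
By this stage in the proof of Theorem~A we are already in one of the cases (a)--(f), with the structure of $G$ --- and, crucially, the edge set of $\Delta(G)$ --- determined above; in particular $\V G=\pi(G/R)\cup\{p\}$, and $\Delta(G)-p$ is a disjoint union of cliques (exactly two, one of them a single vertex, in cases (a)--(c), (e) and (f)). The plan is to read the two remaining assertions off this data. I first record the only graph-theoretic input needed: \emph{if $\Gamma$ is connected, $p$ is a vertex for which $\Gamma-p$ is a disjoint union of at least two cliques, and $p$ is adjacent to every other vertex, then $p$ is the unique cut-vertex of $\Gamma$}; indeed $\Gamma-p$ is disconnected, while $\Gamma-v$ is connected for every $v\neq p$ since each surviving vertex is joined to $p$. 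As the present paper settles Theorem~A only when the simple section is not of type $\PSL{t^a}$, I carry out cases (a)--(d); cases (e) and (f) follow by the same two steps in \cite{DPS2,DPS3}.

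For completeness of $p$ in cases (a), (b) and (c) the point is to join $p$ to every other vertex, and the character table of the socle $S$ supplies the witnesses. In (a) the Suzuki degrees $\frac{r}{2}(q-1)$, $(q-r+1)(q-1)$ and $(q+r+1)(q-1)$, with $q=2^a$ and $r^2=2q$, show that $p=q-1$ is adjacent to $2$ and to every prime divisor of $q^2+1=(q-r+1)(q+r+1)$; in (b) the degrees $20=2^2\cdot 5$, $45=3^2\cdot 5$ and $35=5\cdot 7$ join $p=5$ to $2,3,7$; in (c) the degrees $10=2\cdot 5$, $45=3^2\cdot 5$ and $55=5\cdot 11$ join $p=5$ to $2,3,11$. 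The hypothesis $\V{G/K}\sbs\{p\}$ (respectively $\V R\sbs\{p\}$ when $G=K\times R$) guarantees that the remaining structure contributes only degrees whose prime content away from $p$ is already realised inside $K$: it creates no edge between two vertices distinct from $p$, and can only enlarge the neighbourhood of $p$, while any vertex of $\V G$ lying outside $\pi(S)$ stems from the outer automorphisms permitted in each case and is likewise adjacent to $p$ by the degrees catalogued above. Hence $p$ is complete in (a)--(c), and the graph lemma identifies it as the unique cut-vertex.

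Case (d) is the delicate one and explains the exception in the statement. Here $G=K\times R$ with $K\cong{\rm{J}}_1$ and $p=2$: the degrees $120=2^3\cdot 3\cdot 5$, $56=2^3\cdot 7$ and $76=2^2\cdot 19$ make $2$ adjacent to $3,5,7,19$, while $77=7\cdot 11$, $133=7\cdot 19$ and $209=11\cdot 19$ turn $\{7,11,19\}$ into a clique and $\{3,5\}$ into a clique, but \emph{no} irreducible degree of ${\rm{J}}_1$ is divisible by $22$, so $2$ is not adjacent to $11$. Thus if $R$ is abelian then $\cd G=\cd{{\rm{J}}_1}$ and $p=2$ fails to be a complete vertex, $\Delta(G)-2$ being the disjoint union of the two cliques $\{3,5\}$ and $\{7,11,19\}$; this is precisely the excluded configuration. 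If instead $R$ is non-abelian, then $\V R\sbs\{2\}$ forces a degree $2^k$ with $k\geq 1$ in $\cd R$, whence $2^k\cdot 77\in\cd G$ is divisible by $2\cdot 11$; this is the single edge that was missing, it makes $2$ adjacent to $11$, and $p$ becomes complete.

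It remains to see that $p=2$ is the unique cut-vertex in case (d) in either situation. When $R$ is non-abelian, $2$ is complete and $\Delta(G)-2=\{3,5\}\sqcup\{7,11,19\}$ is unchanged (multiplying a degree by a power of $2$ never joins two odd primes), so the graph lemma applies. When $R$ is abelian, one checks directly that $\Delta(G)-v$ is connected for every $v\neq 2$: deleting $3$ or $5$ leaves the other joined to $2$ and leaves $\{7,11,19\}$ joined to $2$ through $7$ and $19$; deleting any of $7,11,19$ leaves the remaining two adjacent and at least one of $7,19$ still joined to $2$, with $\{3,5\}$ still joined to $2$. Hence $2$ is the unique cut-vertex. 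The main obstacle throughout is not the combinatorics but having the exact edge set in hand: for (a)--(c) the required products $pq$ are read off the simple groups, but for ${\rm{J}}_1$ everything turns on the lone missing product $2\cdot 11$ and on how a non-abelian solvable factor reinstates it, and for the $\PSL{t^a}$ cases the analogous edge bookkeeping --- followed by the same completeness-then-clique-lemma argument --- is deferred to the sequels.
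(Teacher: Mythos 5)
There is a genuine and substantial gap: your proposal proves only the final sentence of Theorem~A (completeness and uniqueness of the cut-vertex), while explicitly assuming ``the structure of $G$ --- and, crucially, the edge set of $\Delta(G)$ --- determined above''. But the statement to be proved is the full equivalence, and the heart of the paper's proof (Theorem~\ref{main}) is precisely the material you take as given. In the ``only if'' direction one must first invoke Theorem~\ref{0.2} to get that $G/R$ is almost simple with socle in $\mathcal{L}_0$, and then --- this is the bulk of the argument --- prove that $L=K\cap R$ is trivial, so that $K\cong S$ is a minimal normal subgroup. That step requires, case by case: Schur multiplier computations ruling out $K$-invariant characters of a chief factor $L/L_0$ (e.g. a perfect central extension of ${\rm Sz}(8)$ by $C_2$ has a character of degree $40$); the maximal subgroup lists of ${\rm Sz}(q)$ and ${\rm PSL}_3(4)$ to force the condition $\mathcal{N}_2$ on the action on the dual of $L/L_0$, which Lemma~\ref{Nqvari} excludes; and for ${\rm M}_{11}$ and ${\rm J}_1$ the regular-orbit theorems of K\"ohler--Pahlings and Fawcett--M\"uller--O'Brien--Wilson together with explicit GAP computations on the low-dimensional modules. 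After that one still has to derive the arithmetic conditions ($p=2^a-1$ a Mersenne prime, $|G:KR|\in\{1,3\}$, $G=K\times R$ for ${\rm M}_{11}$ and ${\rm J}_1$) and the constraint $\V{G/K}\sbs\{p\}$, the latter via extendibility of the Steinberg character and Gallagher's theorem. None of this appears in your proposal.

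Even within the ``if'' direction, which you do attempt, the decisive step is asserted rather than proved. Your claim that the hypothesis $\V{G/K}\sbs\{p\}$ ``creates no edge between two vertices distinct from $p$'' is false as stated: in case (a) with $|G:KR|=a$, the prime $a$ becomes adjacent in $\Delta(G)$ to every vertex of $\pi(K)-\{2\}$ (Theorem~1.1 of the reference \cite{G} in the paper), producing genuinely new edges among vertices distinct from $p$. What actually must be shown --- and what the paper proves --- is that no new edge appears \emph{at the vertex $2$} except $\{2,p\}$: every $\chi\in\irr G$ of even degree lies over the Steinberg character $\theta$ of $K$ (since $|G/KR|$ is odd and $\V R\sbs\{p\}$), $\theta$ extends to $G$, and Gallagher's theorem gives $\chi(1)=\theta(1)\beta(1)$ with $\beta\in\irr{G/K}$, so $\chi(1)$ is a power of $2$ times at most $p$. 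Without this argument your premise that $\Delta(G)-p$ splits into the stated cliques is unsupported, and the cut-vertex conclusion does not follow. Your treatment of case (d) (the missing product $2\cdot 11$ for ${\rm J}_1$, reinstated exactly when $R$ is non-abelian via a degree $2^k\cdot 77$) does match the paper, and your graph-theoretic lemma and uniqueness check are fine; but they are the easy final ten percent of the proof.
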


\begin{ThmB}
  Let $G$ be a (non-solvable) group having a composition factor isomorphic to \(\SL{4}\cong A_5\), let $R$ and $K$ be, respectively,
   the solvable radical and the solvable residual  of \(G\), and let \(p\) be a prime number.
  Then the graph $\Delta(G)$ is connected and has a cut-vertex  $p$,  if and only if $G/R$ is an almost-simple group with socle isomorphic to \(\SL{4}\), $\V G = \{2,3,5\}\cup\{p\}$, and one of the following holds.
\begin{enumeratei} 
\item $K$ is isomorphic either to $\SL 4$ or to $\SL 5$, and $\V {G/K} =\{p\}$; if $p=5$, then $K\cong\SL 4$ and $G=K\times R$.
  \item  $K$ contains a minimal normal subgroup \(L\) of $G$ with $|L| = 2^4$. Moreover,  $G = KR$ and
\begin{enumeratei}
 \item[{\rm{(i)}}]either $L$  is  the natural module for \(K/L\), $p \neq 2$ and $\V{G/K}=\{p\}$, 
 \item[{\rm{(ii)}}] or $L$   is isomorphic to  the restriction  to   \(K/L\), embedded  as  $\Omega_4^-(2)$ into
   $\rm{SL}_4(2)$,  of the standard module of $\rm{SL}_4(2)$. Moreover $p = 5$, $G=K\times R_0$ where $R_0=\cent G K$, and  $\V{R_0}=\V{G/K} \subseteq  \{ 5\}$.
\end{enumeratei}
\item $K$ contains a minimal normal subgroup \(L\) of \(G\) such that $K/L$ is isomorphic to $\SL{5}$, and 
\begin{enumeratei}
 \item[{\rm{(i)}}]  either $L $ is the natural module for \(K/L\), $p\not =5$ and $\V {G/K}=\{p\}$,
  \item[{\rm{(ii)}}] or \(L\) is isomorphic to the restriction to  \(K/L\),  embedded in  \({\rm{SL}}_4(3)\), of the standard module of $\rm{SL}_4(3)$,  $p=2$ and $\V {G/K}\sbs \{2\}$.
\end{enumeratei}
\end{enumeratei} 
In all cases, \(p\) is a complete vertex and the unique cut-vertex of \(\Delta(G)\).
\end{ThmB}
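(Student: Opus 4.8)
The plan is to prove both implications, concentrating the effort on the "only if" direction. First I would reduce to the almost-simple quotient: since $A_5\cong\SL 4$ belongs to the admissible list of non-abelian composition factors, the general structural results quoted in the Introduction (Theorem~\ref{main} and Theorem~\ref{0.2}) apply and give that $G/R$ is almost-simple with socle $\SL 4$, that $KR/R$ equals this socle, and that $\V G=\pi(G/R)\cup\{p\}=\{2,3,5\}\cup\{p\}$. Because $\aut{A_5}\cong S_5$, we have $G/KR\in\{1,C_2\}$, and I would record at the outset the two "top" degree graphs we build upon: $\Delta(A_5)$, with vertices $2,3,5$ and no edges (from $\cd{A_5}=\{1,3,4,5\}$), and $\Delta(S_5)$, with the single edge $\{2,3\}$ coming from the degree $6$ and with $5$ isolated.

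Next I would analyse the perfect group $K$. As $K$ is the solvable residual we have $K/(K\cap R)\cong A_5$, and since the Schur multiplier of $A_5$ is $C_2$ the possibilities for $K$ modulo its solvable radical are $A_5\cong\SL 4$ and $2.A_5\cong\SL 5$. The decisive step is to control the chief factors of $G$ lying inside $K\cap R$: I would show that there is \emph{at most one} such factor $L$, that it is elementary abelian, a minimal normal subgroup of $G$, and an irreducible module for $K/L$. This reduces the classification to deciding which modules $L$ are compatible with $\Delta(G)$ having a cut-vertex. For each candidate faithful irreducible $\F_r[K/L]$-module (for the relevant primes $r$ and the covers $A_5$ and $2.A_5$), I would compute via Clifford theory the orbits of $K/L$ on $\irr L$, the associated inertia subgroups, and hence $\cd K$, and then pass to $G$ using the action of $G/K$ and Gallagher's theorem. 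The point is that only the natural modules, together with the two exceptional restrictions from ${\rm SL}_4(2)$ and ${\rm SL}_4(3)$ singled out in cases~(b)(ii) and~(c)(ii), produce the distinctive pattern in which, among $\{2,3,5\}$, exactly one edge survives while one vertex remains isolated; every other module either joins all three primes or fails to separate them, precluding a cut-vertex.

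For the "if" direction I would run the same Clifford-theoretic computation in reverse for each listed configuration, exhibiting the degrees that realise the claimed edges. For instance, in case~(b)(i) the single orbit of $\SL 4$ on the $15$ nonzero vectors of the natural module has inertia of order $4$, producing a degree-$15$ character, so that $3$ and $5$ become joined while the degree $4=2^2$ leaves $2$ isolated inside $\{2,3,5\}$; the hypothesis $\V{G/K}=\{p\}$ with $p\neq 2$ then forces, through the structure of $G=KR$, that $p$ is adjacent to each of $2,3,5$, whence $p$ is complete and $\Delta(G)-p$ splits into the edge $\{3,5\}$ and the isolated vertex $2$. Completeness of $p$ and the fact that $\Delta(G)-p$ has exactly two connected components (one a single vertex) are read off from these degree computations in every case, and uniqueness of the cut-vertex follows because neither a vertex of a two-vertex complete component nor the isolated vertex can be a cut-vertex.

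The main obstacle is the module-by-module analysis in the classification direction, and especially the non-split cases $K/L\cong\SL 5=2.A_5$ together with the two non-natural modules. There the orbit structure of the group on the dual module and the structure of the inertia subgroups are no longer transparent, and the characters lying over a fixed $\lambda\in\irr L$ must be understood through projective (twisted) representations of the inertia quotients; tracking which primes enter the resulting degrees—so as to prove that precisely the listed modules yield a cut-vertex and that all others are excluded—is the technically heaviest part, and is where the detailed computations of~\cite{DPS2} and~\cite{DPS3} are needed.
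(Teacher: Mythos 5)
You should first be aware that this paper does not actually prove Theorem~B: the introduction states explicitly that the case where the non-abelian composition factor is of type $\PSL{t^a}$ --- which includes $\SL 4\cong A_5\cong \PSL 4\cong\PSL 5$ --- is deferred to the sequels \cite{DPS2} and \cite{DPS3}, so there is no in-paper proof to measure your argument against. What \emph{can} be checked is the reduction step, and there your outline agrees with the paper's framework, with one correction: the reduction comes from Theorem~\ref{0.2} and Proposition~\ref{nonabeliansocle} alone, not from Theorem~\ref{main}, whose hypothesis excludes composition factors of type $\PSL{t^a}$ and which therefore says nothing about the present situation. Granting Theorem~\ref{0.2}, your conclusions that $G/R$ is almost simple with socle $A_5$, that $\V G=\{2,3,5\}\cup\{p\}$ and $|G:KR|\leq 2$, your descriptions of $\Delta(A_5)$ and $\Delta(S_5)$, the degree-$15$ character in case (b)(i), and the derivation of uniqueness of the cut-vertex from the final shape of the graph are all correct.

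Judged as a proof rather than a road map, however, the proposal has a substantive gap --- essentially the one the paper itself deliberately leaves to \cite{DPS2} and \cite{DPS3}. Everything that distinguishes Theorem~B from a generic Clifford-theoretic exercise is left unproved: that $K\cap R$ has the very restricted structure implicit in cases (a)--(c) (your claim that there is ``at most one'' chief factor of $G$ inside $K\cap R$ is not literally true in case (c), where $K\cap R/L$ is a further central chief factor of order $2$, and ruling out longer chief series is genuinely part of the work); that $G=KR$ must hold in cases (b) and (c); that $p=5$ in case (a) forces $K\cong\SL 4$ and $G=K\times R$; and, above all, that among all faithful irreducible modules for $A_5$ and $2.A_5$ over the various primes only the natural modules and the two restrictions singled out in (b)(ii) and (c)(ii) are compatible with a cut-vertex. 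Note also that the analogue of the $L=1$ reduction in the proof of Theorem~\ref{main} fails here precisely because nontrivial $L$ \emph{does} occur, so the argument cannot simply mimic Section~4; the module analysis is not a routine verification but the core of the theorem. Your proposal correctly identifies where this work lives, but does not supply it.
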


Finally, in order to complete the discussion, we provide a characterization of the non-solvable groups whose degree graph is disconnected and has a cut-vertex.

\begin{ThmC}
\label{disconnected_case}
Let \(G\) be a non-solvable group, $R$  the solvable radical of $G$,  $K$ the solvable residual of $G$ and $p$ a prime number.
Then \(\Delta(G)\) is a disconnected graph with cut-vertex $p$ if and only if  $K$ is isomorphic to either $\PSL{t^a}$ or
$\SL{t^a}$ (where \(t\) is a prime and \(t^a\geq 4\)), $G/K$ is abelian,  $|G:KR| = p^b$ with $b \geq 0$, and one of the following holds. 
\begin{enumeratei}
\item  $t$ is odd, $p=2$,  $t^a\not=9$ and $t^a$ is neither a Fermat nor a Mersenne prime. 
\item $t=2$, $t^a \neq 4$,  $p \neq 2$ and $b \geq 1$.  
\end{enumeratei}
Moreover, \(p\) is the unique cut-vertex and a complete vertex of \(\Delta(G)\).
\end{ThmC}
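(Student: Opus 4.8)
The plan is to derive Theorem~C from the Lewis--White classification of the non-solvable groups with disconnected degree graph, recorded in this paper as Theorem~\ref{LewisWhite}. That classification already fixes the global shape of any non-solvable $G$ with $\Delta(G)$ disconnected: the solvable residual $K$ is isomorphic to $\SL{t^a}$ or $\PSL{t^a}$ with $q:=t^a\ge 4$, the quotient $G/K$ is abelian, and $\Delta(G)$ has exactly two connected components, whose vertex sets are controlled by the primes in $\{t\}$, $\pi(q-1)$ and $\pi(q+1)$ together with the primes dividing $|G:KR|$ and the primes of the solvable radical $R$. So my first step is to read off, case by case from Theorem~\ref{LewisWhite}, the exact partition of $\V G$ into the two components, tracking where the defining characteristic $t$, the outer primes dividing $|G:KR|$, and the primes of $R$ land.

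Second, I would record the elementary graph-theoretic reduction that a disconnected graph has a cut-vertex if and only if one of its connected components, viewed on its own, has a cut-vertex: deleting a vertex $v$ of a component $C$ leaves the other components untouched and replaces $C$ by $C-v$, so the total number of components increases precisely when $C-v$ is disconnected. Combined with the observation that $\pi(q-1)$, $\pi(q+1)$ and $\{t\}$ each induce complete subgraphs of $\Delta(G)$ (any two primes dividing a common degree are adjacent, and $q-1$, $q+1$ are themselves degrees of $K$), this reduces the problem to deciding which of the two Lewis--White components fails to be a single clique, and to locating the one vertex whose deletion breaks it.

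Third, I would split on the characteristic. If $t$ is odd, the Steinberg degree $q=t^a$ is a $t$-power and no other degree is divisible by $t$, so $\{t\}$ is an isolated vertex, while the second component has vertex set $\pi(q^2-1)$ (enlarged by the outer and radical primes) and the prime $2$, dividing both $q-1$ and $q+1$, is adjacent to every odd prime dividing $q^2-1$; hence $2$ is a complete vertex of that component, and it is a genuine cut-vertex exactly when both $\pi(q-1)\setminus\{2\}$ and $\pi(q+1)\setminus\{2\}$ are non-empty. This is precisely the arithmetic content of case~(a): $q-1$ and $q+1$ both have an odd prime divisor if and only if neither is a power of $2$, that is, $q\ne 9$ and $q$ is neither a Fermat nor a Mersenne prime, and then $p=2$. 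If instead $t=2$, the group $\PSL{2^a}=\SL{2^a}$ already has the three cliques $\{2\}$, $\pi(q-1)$, $\pi(q+1)$ as the components of its own degree graph, and now the connecting role is played by the prime $p$ dividing $|G:KR|=p^b$: the field automorphisms of $p$-power order create degrees of $G$ joining $\pi(q-1)$ to $\pi(q+1)$ through $p$ while keeping $\{2\}$ in a component of its own, so $\Delta(G)$ has two components and $p$ is the cut-vertex of the larger one. The requirements $q\ne 4$, $p\ne 2$ and $b\ge 1$ are exactly what makes this joining occur and both sides survive the removal of $p$. In each surviving configuration one then checks that no other vertex disconnects its component, so that $p$ is the unique cut-vertex, and that it is adjacent to every other vertex of the component containing it, which is the asserted completeness.

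The main obstacle is the second half of the characteristic-$2$ analysis. One must compute explicitly enough the degree graph of the extensions of $\PSL{2^a}$ by field automorphisms of order $p^b$ (and, once $R$ is present, of the further extensions by $R$) to prove that $p$ genuinely becomes a complete cut-vertex linking $\pi(q-1)$ and $\pi(q+1)$, that it is not silently absorbed into one of these sets, and that the characteristic vertex $2$ stays isolated from the rest. This rests on the detailed information supplied by Theorem~\ref{LewisWhite} about the behaviour of the characters of $K$ under the relevant automorphisms, supplemented by the number-theoretic bookkeeping needed for the Fermat/Mersenne and small-order exclusions. Once these adjacencies are settled, the converse direction --- that every configuration in~(a) and~(b) does produce a disconnected graph with the stated unique complete cut-vertex $p$ --- drops out by reading the same computations backwards.
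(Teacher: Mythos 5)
Your overall route is the same as the paper's (reduce to Theorem~\ref{LewisWhite}, then analyse the two components using the structure of $\Delta(\PSL{t^a})$ and the adjacency of outer primes), but there is a genuine gap at the very first step. You assert that the Lewis--White classification ``already fixes'' that the solvable residual $K$ is isomorphic to $\SL{t^a}$ or $\PSL{t^a}$. It does not: condition~(d) of Theorem~\ref{LewisWhite} explicitly allows a third possibility, namely that $K$ contains a minimal normal subgroup $L$ of $G$ with $K/L\cong\SL{t^a}$ and $L$ the natural module for $K/L$. Excluding this configuration is a substantive part of the forward direction and requires an argument, not a citation: since the natural module is self-dual, the stabiliser in $K$ of any non-trivial irreducible character of $L$ is a Sylow $t$-subgroup, so by Clifford theory every prime in $\pi(t^{2a}-1)$ divides some degree lying over such a character; combined with Lemma~\ref{W} this makes $\Delta(G)-t$ a complete graph, which is incompatible with the existence of a cut-vertex. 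Without this step your proof would wrongly admit groups that the theorem excludes. A second, smaller omission of the same kind: Theorem~\ref{LewisWhite} also allows $\Delta(G)$ to have \emph{three} connected components ($G\cong\SL{2^a}\times A$ with $A$ abelian), and you must observe that three complete components leave no room for a cut-vertex before you may assume there are exactly two.

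Two further points need to be made explicit rather than assumed. First, the condition $|G:KR|=p^b$ is a conclusion of the theorem, not an input: you must derive it from the cut-vertex hypothesis, e.g.\ by noting (Lemma~\ref{W}(a)) that any prime dividing $|G/KR|$ is adjacent in $\Delta(G)$ to every vertex other than $t$, so a second such prime distinct from $p$ would keep $\Delta(G)-p$ connected; the same mechanism forces $p=2$ when $t$ is odd. Second, in the converse for $t=2$ it is not enough that $p$ is adjacent to everything outside $\{2\}$; you must also check that no odd prime of $2^a-1$ is directly adjacent to an odd prime of $2^a+1$ (here the paper uses that $R$ is abelian, $G/KR$ is a cyclic $p$-group, and the Steinberg character extends, so every degree divides $\alpha(1)p^b$ for some $\alpha\in\irr K$), and similarly in the odd-characteristic converse one must verify that no degree of $G$ is divisible by an odd prime of $t^a-1$ and an odd prime of $t^a+1$ simultaneously. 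Your proposal gestures at these verifications but treats them as routine; they are exactly where the hypotheses $b\ge 1$, $p\ne 2$, and the Fermat/Mersenne exclusions earn their keep.
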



As a consequence of the above results and of Theorem~1.1 of~\cite{LM} we have:

\begin{CorD}
  For every group $G$, the degree graph $\Delta(G)$ has at most one cut-vertex.
\end{CorD}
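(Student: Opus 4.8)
The plan is to deduce Corollary~D directly from the main classification theorems together with the known solvable case, by a clean dichotomy on solvability. First I would observe that every finite group $G$ falls into exactly one of two cases: either $G$ is solvable, or it is non-solvable. If $\Delta(G)$ has no cut-vertex at all, there is nothing to prove, so I may assume $\Delta(G)$ does have a cut-vertex $p$; the goal is to show $p$ is unique.

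\medskip

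If $G$ is solvable, then the statement is precisely \cite[Theorem 1.1]{LM}, which asserts (as recalled in the Introduction) that a solvable group whose degree graph has connectivity degree $1$ has a unique cut-vertex. Invoking that result disposes of the solvable case immediately. If $G$ is non-solvable, then $\Delta(G)$ having a cut-vertex $p$ means exactly that the hypotheses of one of Theorems~A, B or C are met (according to whether $\Delta(G)$ is connected with no composition factor $A_5$, connected with a composition factor $A_5$, or disconnected). Each of these three theorems concludes with the explicit assertion that $p$ is \emph{the unique cut-vertex} of $\Delta(G)$. Hence in the non-solvable case uniqueness is read off directly from whichever of the three theorems applies.

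\medskip

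Thus the proof is essentially a two-line case split: solvable groups are handled by \cite{LM}, and non-solvable groups by Theorems~A--C, each of which already guarantees the cut-vertex is unique. I would phrase it as: let $G$ be arbitrary; if $\Delta(G)$ has a cut-vertex, then either $G$ is solvable and we apply \cite[Theorem 1.1]{LM}, or $G$ is non-solvable and we apply Theorem~A, Theorem~B or Theorem~C depending on the nature of $\Delta(G)$; in all instances the cut-vertex is unique.

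\medskip

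There is no genuine obstacle here, since the corollary is a formal consequence of results whose statements already contain the uniqueness conclusion. The only point requiring a word of care is that Theorems~A, B and C together must \emph{exhaust} all non-solvable groups whose degree graph has a cut-vertex: one should note that the three theorems cover, respectively, the connected case without an $A_5$ composition factor, the connected case with an $A_5$ composition factor, and the disconnected case, and that these three scenarios are mutually exclusive and jointly exhaustive for any non-solvable $G$ with a cut-vertex in $\Delta(G)$. Once this exhaustiveness is acknowledged, the corollary follows at once.
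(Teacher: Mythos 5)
Your argument is correct and is precisely the derivation the paper intends: the corollary is stated "as a consequence of the above results and of Theorem~1.1 of~\cite{LM}", i.e. the solvable case is \cite[Theorem 1.1]{LM} and the non-solvable case is read off from the uniqueness assertions in Theorems~A, B and C, whose hypotheses exhaust all non-solvable groups with a cut-vertex. Nothing further is needed.
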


As regards the (connected) graphs associated with the groups of Theorem~A, they are represented in Table~1 \emph{except for those related to case} (e) \emph{subcase} (i), \emph{and to case} (f) \emph{subcase} (i) \emph{when} $p\neq 2$ (note that the dashed edge represented in Table~1 case (d) is actually in the graph if and only if \(\V R=\{2\}\)). 
For the two missing families of graphs we provide next a description.

Recall that, given two graphs \(\Gamma_1\) and \(\Gamma_2\) with
vertex sets \(\V{\Gamma_1}\) and \(\V{\Gamma_2}\) respectively, the \emph{join} of \(\Gamma_1\) and \(\Gamma_2\) is defined as the graph \(\Gamma_1\ast\Gamma_2\) whose vertex set is 
\(\V{\Gamma_1} \cup\V{\Gamma_2}\), and two vertices are adjacent if and only if either one of them is in \(\V{\Gamma_1}\) and the other one in \(\V{\Gamma_2}\), or they are vertices of the same \(\Gamma_i\) and they are adjacent in \(\Gamma_i\). 

Now, consider a group \(G\) as in case~(e) subcase~(i) of Theorem~A, and adopt the notation of that theorem. Defining \(\Gamma_1\) to be the complete graph whose vertex set is \(\{2\}\cup\pi(G/KR)\), and \(\Gamma_2\) to be the graph consisting of two complete connected components with vertex sets \(\pi(t^a-1)-\V{\Gamma_1}\) and \(\pi(t^a+1)-\V{\Gamma_1}\) respectively, it turns out that \(\Delta(G/R)\) is a graph with two connected components: one of them consists of \(t\) only, whereas the other is \(\Gamma_1\ast\Gamma_2\) (this follows, for instance, from~\cite[Theorem~A]{W1}). Given that, it is not difficult to see that \(\Delta(G)\) is obtained starting from this \(\Delta(G/R)\) and then connecting the vertex \(p\) (both if it is already in \(\pi(G/R)\) or not) with all the other vertices. 

Similarly, if \(G\) is as in case (f) subcase (i) of Theorem~A for $p\neq 2$, one defines \(\Gamma_1\) to be the complete graph whose vertex set is \(\pi(G/KR)\), and \(\Gamma_2\) to be the graph with two complete connected components of vertex sets \(\pi(2^a-1)-\V{\Gamma_1}\) and \(\pi(2^a+1)-\V{\Gamma_1}\) respectively. Then \(\Delta(G/R)\) is as above (with \(2\) playing the role of \(t\), of course) and \(\Delta(G)\) is also obtained in the same way as above. 


\smallskip
Moving to Theorem~B, the graphs arising from it are displayed in Table~2 only in the case when the cut-vertex \(p\) is not already in the set \(\pi(G/R)=\{2,3,5\}\). The missing graphs can be easily described: they are all the paths of length \(2\) with vertex set \(\{2,3,5\}\). Of course there are three of them and they all occur: for instance, all these graphs occur as \(\Delta(G)\) for \(G\) as in case (a) of Theorem~B (it is enough to consider the direct product \(\SL 4\times R\) where \(R\) is a non-abelian \(q\)-group, for \(q\in\{2,3,5\}\)). It is also clear that case (b) subcase (ii) is associated to the path \(2-5-3\), and case (c) subcase (ii) to the path \(3-2-5\). 

Finally, the (disconnected) graphs related to the groups of Theorem~C are displayed in Table~3.

\begin{center}

\begin{tikzpicture}[line cap=round,line join=round,>=triangle 45,x=1.0cm,y=1.0cm]
\clip(-5.670283183081512,-9.037673478219853) rectangle (8.7,4.99);
\draw [line width=1.2pt] (0.6545454545454547,1.994545454545455)-- (2.,2.);
\draw [line width=1.2pt] (3.,3.)-- (2.,2.);
\draw [line width=1.2pt] (2.,2.)-- (3.,1.);
\draw (0.4,1.9) node[anchor=north west] {$2$};
\draw (1.8263049552828954,1.9) node[anchor=north west] {$5$};
\draw (2.833309332078114,3.55) node[anchor=north west] {$3$};
\draw (2.8,1.0) node[anchor=north west] {$7$};
\draw [line width=1.2pt] (4.672727272727274,1.9763636363636368)-- (6.,2.);
\draw [line width=1.2pt] (7.,3.)-- (6.,2.);
\draw [line width=1.2pt] (6.,2.)-- (7.,1.);
\draw (4.4,1.9) node[anchor=north west] {$3$};
\draw (5.809566712383983,1.9) node[anchor=north west] {$5$};
\draw (6.838948964219096,3.584848919639319) node[anchor=north west] {$2$};
\draw (6.8,1.0) node[anchor=north west] {$11$};
\draw [line width=1.2pt] (-3.232,-1.0082)-- (-2.,-2.);
\draw [line width=1.2pt] (-1.0163636363636446,-1.06)-- (-2.,-2.);
\draw [line width=1.2pt] (-2.,-2.)-- (-1.,-3.);
\draw (-3.387739929012349,-2.99) node[anchor=north west] {$5$};
\draw (-2.2,-1.99) node[anchor=north west] {$2$};
\draw [line width=1.2pt] (-1.0163636363636446,-1.06)-- (-1.,-3.);

\draw (-1.1723303000628675,-0.5) node[anchor=north west] {$7$};
\draw (-1.217086050142655,-2.99) node[anchor=north west] {$19$};
\draw (-3.0429841789325616,-3.8236285939245875) node[anchor=north west] {\small Case (d):  $S\cong \text{J}_1$};
\draw (0.3,0.6) node[anchor=north west] {\small Case (b): $S\cong \text{ PSL}_3(4)$};
\draw (4.4,0.6) node[anchor=north west] {\small Case (c): $S\cong \text{M}_{11}$};
\draw [line width=0.8pt] (-4.963430220573079,4.189049680478253)-- (7.709297052154194,4.158140589569161);
\draw [line width=0.8pt] (-4.888084930942077,-0.1857019171304883)-- (7.784642341785204,-0.2166110080395793);
\draw (-4.081454055249056,4.9) node[anchor=north west] {TABLE 1: Graphs arising from Theorem A $\;(\pi=\pi(G/R)\cup \{p\}$)};
\draw [line width=1.2pt] (7.,3.)-- (7.,1.);
\draw [line width=1.2pt] (3.,1.)-- (3.,3.);

\draw [line width=1.2pt] (-3.44,1.94)-- (-2.79,1.9454545454545484);

\draw (-3.8,1.8) node[anchor=north west] {$2$};
\draw (-2.8,1.8) node[anchor=north west] {$p$};

\draw [line width=1.2pt] (-1.67,1.85) circle (1.1402030583337489cm);

\draw (-2.29,3.569180794040912) node[anchor=north west] {\small (clique)};
\draw (-2.29,2.535180794040912) node[anchor=north west] {\small $\pi-\{2\}$};

\draw [line width=0.8pt] (-4.9582,-4.5302)-- (7.3,-4.5611090909090874);
\draw (-3.8,0.6) node[anchor=north west] {\small Case (a): $S\cong \text{ Sz}(2^a)$};
\draw [line width=1.2pt] (-3.232,-2.9882)-- (-2.,-2.);
\draw [line width=1.2pt] (-3.232,-1.0082)-- (-3.232,-2.9882);
\draw [line width=1.2pt] (-1.0163636363636446,-1.06)-- (0.,-2.);
\draw [line width=1.2pt] (-1.,-3.)-- (0.,-2.);
\draw (-3.4324956790921366,-0.5) node[anchor=north west] {$3$};
\draw (0.01,-1.5410853398554154) node[anchor=north west] {$11$};
\draw [line width=1.2pt] (2.208,-2.0482)-- (3.0,-2.0427454545454475);
\draw (2.0,-2.13339859073308) node[anchor=north west] {$t$};
\draw (2.90,-2.13339859073308) node[anchor=north west] {$p$};


\draw (1.99887670280948265,-3.5641817179272414) node[anchor=north west] {\small Case (e)(ii), and Case (f)(ii):};

\draw (1.890083705681833,-3.9788728438447996) node[anchor=north west] {\small $S\cong \text{ PSL}_2(t^a)$, $t^a>5$, $t^a\not =13$};

\draw [line width=0.8pt] (-5.019673455998493,-8.74424507201958)-- (7.653053816728782,-8.775154162928668);
\draw [line width=1.2pt] (-3.7843106117082583,-6.438879183114817)-- (-2.4388560662537127,-6.433424637660272);
\draw [line width=1.2pt] (-1.4388560662537127,-5.433424637660272)-- (-2.4388560662537127,-6.433424637660272);
\draw [line width=1.2pt] (-2.4388560662537127,-6.433424637660272)-- (-1.4388560662537127,-7.433424637660274);
\draw (-3.99,-6.5) node[anchor=north west] {$3$};
\draw (-1.7,-7.49) node[anchor=north west] {$13$};
\draw [line width=1.2pt] (-1.4388560662537127,-7.433424637660274)-- (-1.4388560662537127,-5.433424637660272);
\draw (1.4458810796047015,-14.296474112594906) node[anchor=north west] {$2$};
\draw (-1.6422656759006364,-4.8) node[anchor=north west] {$7$};
\draw (-2.6045143026160678,-6.5) node[anchor=north west] {$2$};
\draw [line width=1.2pt,dash pattern=on 5pt off 5pt] (-2.,-2.)-- (0.,-2.);

\draw (-4.247186805009693,-8.1) node[anchor=north west] {\small Case (e)(iii): $S\cong \text{ PSL}_2(13)$};

\draw (1.6,-6.4) node[anchor=north west] {\footnotesize  $\pi(2^a-1)\cup \{2\}$};
\draw (3.9,-6.4) node[anchor=north west] {\footnotesize  $\pi(2^a+1)\cup \{2\}$};
\draw (3.7,-5.7) node[anchor=north west] {$2$};
\draw [line width=1.2pt] (2.7484751307761712,-6.8172986232771535) circle (1.1348626463258962cm);
\draw [line width=1.2pt] (5.036121399355275,-6.753747596342545) circle (1.1348626463258977cm);

\draw (3.5391512134418255,-0.38) node[anchor=north west] {(\small clique)};
\draw (3.5391512134418255,-1.4) node[anchor=north west] {$\pi$};
\draw [line width=1.2pt] (4.059994740799066,-2.1333232805455307) circle (1.1348626463258986cm);

\draw (1.8768846367697825,-5.1) node[anchor=north west] {(\small clique)};

\draw (4.546800920423833,-5.1) node[anchor=north west] {\small (clique)};

\draw (1.4,-8.1) node[anchor=north west] {\small Case (f)(i): $S\cong \text{ SL}_2(2^a)$, $p=2$};

\begin{scriptsize}
\draw [fill=black] (0.6545454545454547,1.994545454545455) circle (2.2pt);
\draw [fill=black] (2.,2.) circle (2.2pt);
\draw [fill=black] (3.,3.) circle (2.2pt);
\draw [fill=black] (3.,1.) circle (2.2pt);
\draw [fill=black] (4.672727272727274,1.9763636363636368) circle (2.2pt);
\draw [fill=black] (6.,2.) circle (2.2pt);
\draw [fill=black] (7.,3.) circle (2.2pt);
\draw [fill=black] (7.,1.) circle (2.2pt);
\draw [fill=black] (-3.232,-1.0082) circle (2.2pt);
\draw [fill=black] (-2.,-2.) circle (2.2pt);
\draw [fill=black] (-1.0163636363636446,-1.06) circle (2.2pt);
\draw [fill=black] (-1.,-3.) circle (2.2pt);

\draw [fill=black] (-3.54,1.94) circle (2.2pt);

\draw [fill=black] (-2.80,1.9454545454545484) circle (2.2pt);
\draw [fill=black] (-3.232,-2.9882) circle (2.2pt);
\draw [fill=black] (0.,-2.) circle (2.2pt);
\draw [fill=black] (2.208,-2.0482) circle (2.2pt);
\draw [fill=black] (2.93,-2.04466906312302) circle (2.2pt);
\draw [fill=black] (-3.7843106117082583,-6.438879183114817) circle (2.2pt);
\draw [fill=black] (-2.4388560662537127,-6.433424637660272) circle (2.2pt);
\draw [fill=black] (-1.4388560662537127,-5.433424637660272) circle (2.2pt);
\draw [fill=black] (-1.4388560662537127,-7.433424637660274) circle (2.2pt);
\draw [fill=black] (3.8937680831318615,-6.765506140871638) circle (2.2pt);
\end{scriptsize}
\end{tikzpicture}
\end{center}


\begin{center}
\begin{tikzpicture}[line cap=round,line join=round,>=triangle 45,x=1.0cm,y=1.0cm]
\clip(-0.1818181818181825,-1.0963636363636373) rectangle (13.472727272727278,5.303636363636367);
\draw [line width=1.2pt] (0.6545454545454547,1.994545454545455)-- (2.,2.);
\draw [line width=1.2pt] (3.,3.)-- (2.,2.);
\draw [line width=1.2pt] (2.,2.)-- (3.,1.);
\draw (0.48,1.9) node[anchor=north west] {$2$};
\draw (1.8,1.9) node[anchor=north west] {$p$};
\draw (2.836363636363637,3.5) node[anchor=north west] {$3$};
\draw (2.8,1.0) node[anchor=north west] {$5$};
\draw [line width=1.2pt] (4.672727272727274,1.9763636363636368)-- (6.,2.);
\draw [line width=1.2pt] (7.,3.)-- (6.,2.);
\draw [line width=1.2pt] (6.,2.)-- (7.,1.);
\draw (4.49,1.9) node[anchor=north west] {$5$};
\draw (5.74,1.9) node[anchor=north west] {$p$};
\draw (6.836363636363638,3.5) node[anchor=north west] {$3$};
\draw (6.8,1.0) node[anchor=north west] {$2$};
\draw [line width=1.2pt] (8.56286759390931,2.021974094242668)-- (9.908322139363856,2.0274286396972134);
\draw [line width=1.2pt] (10.908322139363856,3.0274286396972134)-- (9.908322139363856,2.0274286396972134);
\draw [line width=1.2pt] (9.908322139363856,2.0274286396972134)-- (10.908322139363856,1.0274286396972132);

\draw (8.4,2.0) node[anchor=north west] {$2$};
\draw (9.7,2.0) node[anchor=north west] {$p$};
\draw (10.7,3.55) node[anchor=north west] {$3$};
\draw (10.7,1.0) node[anchor=north west] {$5$};

\draw (8.92727272727273,0.5945454545454547) node[anchor=north west] {\small Case (b)(i)};
\draw (0.1,0.5945454545454547) node[anchor=north west] {\small Case (a),  $G=\SL 4\times R$};
\draw (4.3,0.5945454545454547) node[anchor=north west] {\small Case (a),  $G\not =\SL 4\times R$};
\draw (5.1,0.2) node[anchor=north west] {\small Case(c)(i)};
\draw [line width=0.8pt] (0.3272727272727267,4.030909090909092)-- (13.,4.);
\draw [line width=0.8pt] (0.35454545454545466,-0.3145454545454545)-- (13.027272727272724,-0.34545454545454546);
\draw (1.2909090909090908,4.7) node[anchor=north west] {TABLE 2: Graphs arising from Theorem B when $p\not \in \{2,3,5\}$};
\draw [line width=1.2pt] (7.,3.)-- (7.,1.);
\draw [line width=1.2pt] (10.908322139363856,3.0274286396972134)-- (10.908322139363856,1.0274286396972132);
\begin{scriptsize}
\draw [fill=black] (0.6545454545454547,1.994545454545455) circle (2.2pt);
\draw [fill=black] (2.,2.) circle (2.2pt);
\draw [fill=black] (3.,3.) circle (2.2pt);
\draw [fill=black] (3.,1.) circle (2.2pt);
\draw [fill=black] (4.672727272727274,1.9763636363636368) circle (2.2pt);
\draw [fill=black] (6.,2.) circle (2.2pt);
\draw [fill=black] (7.,3.) circle (2.2pt);
\draw [fill=black] (7.,1.) circle (2.2pt);

\draw [fill=black] (8.56286759390931,2.021974094242668) circle (2.5pt);
\draw [fill=black] (9.908322139363856,2.0274286396972134) circle (2.5pt);
\draw [fill=black] (10.908322139363856,3.0274286396972134) circle (2.5pt);
\draw [fill=black] (10.908322139363856,1.0274286396972132) circle (2.5pt);


\end{scriptsize}
\end{tikzpicture}
\end{center}


\begin{center}
\begin{tikzpicture}[line cap=round,line join=round,>=triangle 45,x=1.0cm,y=1.0cm]
\clip(-4.8540813442494395,-9.020055864908686) rectangle (8.794900594331786,-3.17598472500909);
\draw (-2.6045266597314396,-3.1) node[anchor=north west] {TABLE 3: Graphs arising from Theorem C};
\draw [line width=0.8pt] (-5.057671754670062,-3.711641226649691)-- (7.615055518057221,-3.742550317558779);
\draw [line width=0.8pt] (-5.019673455998493,-8.74424507201958)-- (7.653053816728782,-8.775154162928668);
\draw (-4.277363797548824,-8.09) node[anchor=north west] {\small Case (a): $S\cong \text{ PSL}_2(t^a)$, $t\not =2$};
\draw (2.6432467628585576,-8.09) node[anchor=north west] {\small Case (b): $S\cong \text{ SL}_2(2^a)$};
\draw (-4.00822894242187,-4.7) node[anchor=north west] {(\small clique)};
\draw (-1.3207608000355636,-4.7) node[anchor=north west] {\small (clique)};
\draw (-2.25,-5.505959238918146) node[anchor=north west] {$2$};
\draw (-3.931333269528455,-5.925005030948703) node[anchor=north west] {$\pi(t^a-1)$};
\draw (-1.547567409832579,-5.925005030948703) node[anchor=north west] {$\pi(t^a+1)$};
\draw (2.60479892641185,-4.7) node[anchor=north west] {\small (clique)};
\draw (5.2,-4.7) node[anchor=north west] {\small (clique)};
\draw (2.1,-5.925005030948703) node[anchor=north west] {\footnotesize  $\pi(2^a-1)\cup \{p\}$};
\draw (4.49,-5.944228949172056) node[anchor=north west] {\footnotesize  $\pi(2^a+1)\cup \{p\}$};
\draw (-2.2011806294266094,-4.1) node[anchor=north west] {$t$};
\draw (4.3,-4.1) node[anchor=north west] {$2$};
\draw (4.265727648290342,-5.505959238918146) node[anchor=north west] {$p$};

\draw [line width=1.2pt] (3.296859982452588,-6.424826904755905) circle (1.1348626463258997cm);

\draw [line width=1.2pt] (5.584506251031695,-6.424826904755905) circle (1.1348626463258997cm);
\draw [line width=1.2pt] (-3.1816004588176554,-6.424826904755905) circle (1.1348626463258997cm);
\draw [line width=1.2pt] (-0.8939541902385486,-6.424826904755905) circle (1.1348626463258997cm);
\begin{scriptsize}

\draw [fill=black] (-2.0473892836397787,-4.694674264654051) circle (2.2pt);

\draw [fill=black] (4.431071157630464,-6.328707313639136) circle (2.2pt);
\draw [fill=black] (4.5079668305238805,-4.694674264654051) circle (2.2pt);

\draw [fill=black] (-2.0262176630488398,-6.328707313639136) circle (2.2pt);
\end{scriptsize}
\end{tikzpicture}

\end{center}


\section{Preliminaries}\label{prel}
A simple undirected graph $\Delta = (V, E)$ is defined by its  vertex set $V = \V{\Delta}$ and its edge set $E = \E{\Delta}$, which is a subset of the set $V^{[2]}$ (consisting of the subsets of size $2$ of $V$).
The graph $\Delta$ is said to be  \emph{complete} if $E = V^{[2]}$. 
When $\Delta = \Delta(G)$ is the degree graph of a group $G$, we will write $\V G$ for $\V{\Delta}$ and $\E G$ for $\E{\Delta}$. 

A graph $\Delta_0 = (V_0, E_0)$ is a \emph{subgraph} of $\Delta = (V, E)$ if $V_0 \sbs V$ and
$E_0 \sbs E \cap V_0^{[2]}$;  $\Delta_0$ is an \emph{induced subgraph} of $\Delta$ if   
$E_0 =  E \cap V_0^{[2]}$; we remark that an induced subgraph of $\Delta$ is uniquely determined by the choice of its vertex set $V_0$.
For $X \subset V$, we denote by $\Delta - X$ the subgraph of $\Delta$ induced by the vertex set $V - X$ (i.e. the subgraph obtained by `deleting' the vertices in $X$ and all the edges incident to them). If $X = \{v\}$, we simply write 
$\Delta -v$. 

A graph $\Delta$ is \emph{connected} if  every pair of  vertices $u$ and $v$ is linked by a path in  $\Delta$.
For $k \geq 0$, $\Delta$ is \emph{$k$-connected} if $|\V{\Delta}| >k$ and $\Delta - X$ is connected for every $X \subseteq V$ such
that $|X| = k$. The largest integer $k$ such that $\Delta$ is $k$-connected is called the \emph{connectivity degree} of
$\Delta$.
So, a disconnected graph has connectivity degree $0$, while a graph $\Delta$ has connectivity degree $1$ if and only if
$|\V{\Delta}| \geq 2$, $\Delta$ is connected, and  there exists a vertex $v \in \V{\Delta}$ such that $\Delta - v$ is disconnected; in this case, $v$ is called a \emph{cut-vertex} of $\Delta$.  
A  vertex $v$ of a graph $\Delta$ is said to be \emph{complete} if $v$ is adjacent to every other vertex of $\Delta$.
It is easily seen that a graph with at least three vertices and  that posseses at least two complete vertices is $2$-connected. A complete subgraph is called a \emph{clique}.
We refer to~\cite{Di} for further background in graph theory.

\smallskip
Throughout the paper some standard  facts of character theory, such as Clifford correspondence, Gallagher's theorem, properties of character extensions  (see \cite{Is}) and coprime actions  (see \cite{A}) will be used without references.

A basic result for the subject at hand, is the following consequence of the Ito-Michler's theorem, that describes the set of the vertices of the degree graph of a finite group.

\begin{proposition}\label{Ito}
Let $G$ be a group and $p$ a prime number. Then $p \not\in \V{G}$ if and only if $G$ has a normal abelian Sylow $p$-subgroup. 
\end{proposition}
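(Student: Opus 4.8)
This proposition is the Ito--Michler theorem, and its two implications are of completely different natures. For the implication ``$G$ has a normal abelian Sylow $p$-subgroup $\Rightarrow p\notin\V G$'', the plan is to invoke Ito's theorem on character degrees: if $P\in\syl p G$ is both normal and abelian, then every $\chi\in\irr G$ has $\chi(1)$ dividing the index $|G:P|$, which is a $p'$-number; hence no irreducible character degree is divisible by $p$, and $p\notin\V G$ by the definition of the vertex set. This direction is elementary and classification-free.

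The reverse implication, $p\notin\V G\Rightarrow P\in\syl p G$ normal and abelian, is the substantial half. I would first isolate a \emph{heredity} principle coming from Clifford theory: for $N\nor G$, any $\vartheta\in\irr N$ lying under some $\chi\in\irr G$ satisfies $\vartheta(1)\mid\chi(1)$, so $p\notin\V G$ forces $p\notin\V N$; dually $\irr{G/N}\sbs\irr G$ gives $p\notin\V{G/N}$. Granting normality of $P$, abelianness is then immediate, for the heredity principle applied to $N=P$ yields $p\notin\V P$, and a $p$-group has all its irreducible degrees prime to $p$ precisely when it is abelian (a nonlinear irreducible character of a $p$-group has degree a positive power of $p$). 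Thus the whole weight of the theorem rests on proving that $P$ is \emph{normal}.

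For normality I would induct on $|G|$, choosing a minimal normal subgroup $N$. Applying the heredity principle and the inductive hypothesis to $G/N$ gives that $PN/N$ is a normal abelian Sylow $p$-subgroup of $G/N$, whence $PN\nor G$ and $[P,P]\sbs N$. If $p$ divides $|N|$, then $N$ must be an elementary abelian $p$-group --- the alternative, $N\cong S^k$ with $S$ nonabelian simple and $p\mid|S|$, is excluded because a character of $S$ of degree divisible by $p$ produces, as an outer tensor power, a character in $\irr N$ of degree divisible by $p$, contradicting $p\notin\V N$ --- and then $N\leq P$, so $P=PN\nor G$. If instead $p\nmid|N|$, then $[P,P]\sbs P\cap N=1$, so $P$ is abelian and $PN=N\rtimes P$; now if $P$ acted nontrivially on $N$ it would act nontrivially on $\irr N$ (Brauer's permutation lemma, using coprimality), and a character of $PN$ lying over a $\vartheta\in\irr N$ with nontrivial $P$-orbit would have degree divisible by $|P:I_P(\vartheta)|$, a nontrivial power of $p$, again contradicting $p\notin\V{PN}$. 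Hence $P$ centralizes $N$, so $P=\oh p{PN}$ is characteristic in $PN\nor G$ and therefore normal in $G$, closing the induction.

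The decisive obstacle, and the sole point where the classification of finite simple groups is unavoidable, is the input used to rule out the nonabelian minimal normal subgroup: one must know that \emph{every} prime dividing the order of a nonabelian simple group $S$ divides some degree in $\irr S$, equivalently that $p\in\V S$ whenever $p\mid|S|$. This is verified family by family --- in the defining characteristic of a group of Lie type the Steinberg character already has degree equal to the full $p$-part of $|S|$, while the cross-characteristic primes, the alternating groups, and the sporadic groups are handled by separate (known) computations --- and it is precisely this case analysis that makes the proposition depend on the classification.
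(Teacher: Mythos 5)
Your argument is correct, but note that the paper does not prove this proposition at all: it is quoted as a known black-box consequence of the It\^o--Michler theorem, so there is no ``paper proof'' to match. What you have written is, in effect, the standard proof of It\^o--Michler itself: the easy direction via It\^o's theorem on degrees over a normal abelian subgroup, and the hard direction by Clifford-theoretic heredity to normal subgroups and quotients, induction through a minimal normal subgroup $N$, the dichotomy $p\mid |N|$ versus $p\nmid |N|$, and the final appeal to Michler's classification-dependent fact that a non-abelian simple group $S$ with $p\mid |S|$ always has $p\in\V S$. This reduction is sound, and you correctly locate the only CFSG input. One step is compressed: Brauer's permutation lemma by itself only converts ``$P$ fixes every $\vartheta\in\irr N$'' into ``$P$ fixes every conjugacy class of $N$ setwise''; to conclude $[N,P]=1$ you still need the coprime orbit-counting argument showing that each $x\in P$ has a fixed point in every class, so that $\cent Nx$ meets every conjugacy class of $N$ and hence equals $N$ (no proper subgroup of a finite group meets every conjugacy class). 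That supplement is standard and elementary, so this is a gloss to fill in rather than a gap, but as stated the lemma you cite does not by itself deliver triviality of the action on $N$.
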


We also mention that, as a consequence of Clifford theory, if $H$ is isomorphic either to a normal subgroup or to a factor group
of a group $G$ (even more generally, to a subnormal section of $G$), then $\Delta(H)$ is a subgraph of $\Delta(G)$. 

\smallskip
As an important reference for our discussion, we recall some properties of the degree graphs of the non-abelian simple groups (\cite{W}). 

\begin{lemma}[\mbox{\cite[Theorem~5.2]{W}}]
  \label{PSL2}
Let $S \cong \PSL{t^a}$ or $S \cong \SL{t^a}$, with $t$ prime and $a \geq 1$. 
Let $\pi_{+} = \pi(t^a+1)$ and $\pi_{-} = \pi(t^a-1)$. For a subset $\pi$ of vertices 
of $\Delta(S)$, we denote by $\Delta_{\pi}$ the subgraph of $\Delta = \Delta(S)$ induced
by the subset $\pi$.
Then 
\begin{enumeratei}
\item if $t=2$ and $a \geq 2$, then $\Delta(S)$ has three connected components, $\{t\}$, $\Delta_{\pi_{+}}$ and 
$\Delta_{\pi_{-}}$, and each of them is a complete graph.  
\item if $t > 2$ and $t^a > 5$, then  $\Delta(S)$ has two connected components, 
$\{t\}$ and  $\Delta_{\pi_{+} \cup \pi_{-}}$; moreover, both  $\Delta_{\pi_{+}}$ and $\Delta_{\pi_{-}}$ are
complete graphs, no vertex  in $\pi_{+} - \{ 2 \}$ is adjacent to any vertex  in  
$\pi_{-} - \{ 2\}$ and $2$ is a complete vertex of $\Delta_{\pi_{+} \cup \pi_{-}}$. 
\end{enumeratei}
\end{lemma}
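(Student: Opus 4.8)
The plan is to read off the entire statement from the known list of irreducible character degrees of $S$, and then decide all adjacencies by elementary divisibility arguments. I would begin by recalling the classical description of $\cd S$ (available in the standard references for the character tables of these groups): writing $q=t^a$, the non-trivial degrees of $\PSL{q}$ are the Steinberg degree $q$ together with $q-1$, $q+1$, and---according to the congruence of $q$ modulo $4$---one of the two ``halved'' degrees $(q\pm1)/2$; for $\SL{q}$ with $t$ odd one has in addition both degrees $(q-1)/2$ and $(q+1)/2$. In every case the only degree divisible by $t$ is $q$, while every other non-trivial degree divides $q^2-1=(q-1)(q+1)$. Since $\gcd(t,q^2-1)=1$, it follows at once that $\V S=\{t\}\cup\pi_+\cup\pi_-$ and that $t$ is an isolated vertex of $\Delta(S)$: it divides only the prime-power degree $q$, hence is adjacent to no other prime. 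This disposes of the component $\{t\}$ in both parts.

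Next I would treat the two parities of $t$ separately. If $t=2$, then $q-1$ and $q+1$ are both odd and coprime, so $\pi_+\cap\pi_-=\emptyset$ and $2\notin\pi_+\cup\pi_-$, and the only non-trivial degrees are $q$, $q-1$, $q+1$. Any two primes of $\pi_+$ are adjacent because they both divide $q+1$, so $\Delta_{\pi_+}$ is complete, and likewise $\Delta_{\pi_-}$. There is no edge between $\pi_+$ and $\pi_-$, since such an edge would require a degree divisible simultaneously by a prime of $q+1$ and a prime of $q-1$, which is impossible for each of $q$, $q-1$, $q+1$. This yields the three complete connected components of part~(a).

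For $t$ odd and $q>5$ I would argue in the same spirit, the only new feature being the common prime $2$. Here $q\pm1$ are both even with $\gcd(q-1,q+1)=2$, so $\pi_+\cap\pi_-=\{2\}$; as before $\Delta_{\pi_+}$ and $\Delta_{\pi_-}$ are complete (through $q+1$ and $q-1$ respectively), and $2$ is adjacent to every prime of $\pi_+\setminus\{2\}$ via $q+1$ and to every prime of $\pi_-\setminus\{2\}$ via $q-1$, so $2$ is a complete vertex of $\Delta_{\pi_+\cup\pi_-}$, which is therefore connected. It then remains to check that no prime $r\in\pi_+\setminus\{2\}$ is adjacent to any prime $s\in\pi_-\setminus\{2\}$: the degrees divisible by $r$ lie among $\{q+1,(q+1)/2\}$, all of which are coprime to $q-1$ and hence to $s$, so no witnessing degree exists. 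Together with the isolated vertex $\{t\}$, this gives the two components of part~(b).

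The one point I would treat most carefully, and the only real subtlety, is the bookkeeping for the halved degrees $(q\pm1)/2$: one must confirm both that they are included correctly (their occurrence depends on $q\bmod 4$ for $\PSL{q}$, whereas both appear for $\SL{q}$) and, crucially, that they never create an edge across $\pi_+\setminus\{2\}$ and $\pi_-\setminus\{2\}$. The latter follows immediately from the fact that an odd prime cannot divide both $q-1$ and $q+1$, so $(q+1)/2$ is supported on $\pi_+$ and $(q-1)/2$ on $\pi_-$. Everything else is a direct translation of the degree list into divisibility relations, carried out uniformly for $\PSL{q}$ and $\SL{q}$.
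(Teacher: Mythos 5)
The paper itself offers no proof of this lemma: it is imported verbatim from White \cite{W} (Theorem~5.2 there), so there is no internal argument to compare against. Your proposal is the standard — and correct — way to verify it: read the vertex set and all adjacencies directly off the known list of irreducible character degrees of ${\rm PSL}_2(q)$ and ${\rm SL}_2(q)$, using that $q$ is the unique degree divisible by $t$ (so $t$ is isolated), that the degrees $q-1$ and $q+1$ make $\Delta_{\pi_-}$ and $\Delta_{\pi_+}$ complete, that an odd prime cannot divide both $q-1$ and $q+1$ (so no edges cross $\pi_+ - \{2\}$ and $\pi_- - \{2\}$, even through the halved degrees), and that in odd characteristic $2$ divides both $q\pm 1$ and is therefore a complete vertex of $\Delta_{\pi_+\cup\pi_-}$. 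The one point I would ask you to make explicit is the behaviour of the generic degree list at the boundary of the hypotheses: for $q=5$ the group ${\rm PSL}_2(5)$ has \emph{no} character of degree $q+1=6$ (the number of principal-series characters of degree $q+1$ is $(q-5)/4$ when $q\equiv 1\pmod 4$), which is precisely why part~(b) excludes $t^a=5$; for all $q>5$ odd, and for all $q=2^a$ with $a\ge 2$, both $q-1$ and $q+1$ do occur as degrees, so every prime of $\pi_+\cup\pi_-$ really is a vertex and the completeness claims hold. Since you only invoke the full list under the stated hypotheses, nothing in your argument breaks, but the exceptional case at $q=5$ should be acknowledged rather than silently absorbed into ``the classical description of $\cd S$''.
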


\begin{lemma}[\mbox{\cite[Theorem~4.1]{W}}]
  \label{Sz}
  Let $S \cong {\rm Sz}(q)$, where $q = 2^a \geq 8$ and $a$ is odd. Then  $\Delta(S)$  has vertex set $\pi(S) = \{2 \} \cup \pi_0$,
  where  $\pi_0 = \pi((q-1)(q^2+1))$,  the set $\pi_0$ induces a complete subgraph of $\Delta(G)$ and  the vertex $2$
  is adjacent only to  the primes in $\pi_1 = \pi(q-1)$ in $\Delta(S)$. 
\end{lemma}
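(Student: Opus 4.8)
The plan is to read the entire statement off the classical list of irreducible character degrees of $S={\rm Sz}(q)$. Write $q=2^a$ with $a=2m+1\ge 3$ and set $r=2^{m+1}$, so that $r^2=2q$. Then $\cd S$ consists of the numbers
\[
1,\qquad q^2,\qquad q^2+1,\qquad (q-1)(q-r+1),\qquad (q-1)(q+r+1),\qquad \tfrac{r}{2}(q-1).
\]
The only number-theoretic facts I need about these are the factorization $q^2+1=(q-r+1)(q+r+1)$ together with a few coprimality statements: since $q$ is even, $q-1$ and $q^2+1$ are odd, and $q^2+1\equiv 2\pmod{q-1}$ forces $\gcd(q-1,q^2+1)$ to divide $2$, hence to equal $1$; likewise the two factors $q-r+1$ and $q+r+1$ are odd with difference $2r$ a power of $2$, so they too are coprime. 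From this the set of prime divisors of $|S|=q^2(q-1)(q^2+1)$ is the disjoint union of $\{2\}$, $\pi(q-1)$ and $\pi(q^2+1)$; since each of these primes already occurs among the degrees above (e.g.\ $\{2\}$ in $q^2$, the primes of $\pi(q-1)$ in $\tfrac{r}{2}(q-1)$, and those of $\pi(q^2+1)$ in $q^2+1$), we obtain $\V S=\pi(S)=\{2\}\cup\pi_0$ with $\pi_0=\pi((q-1)(q^2+1))$.

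Next I would settle the edges incident to the vertex $2$. Among the listed degrees, the only even ones are $q^2$, a power of $2$ contributing no odd neighbour, and $\tfrac{r}{2}(q-1)=2^m(q-1)$; since $a\ge 3$ gives $m\ge 1$, the odd part of the latter is precisely $q-1$. Hence $2p$ divides a degree exactly when $p\in\pi(q-1)$, so $2$ is adjacent to every prime of $\pi_1=\pi(q-1)$ and to no prime of $\pi(q^2+1)$, which is the final assertion of the lemma.

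It then remains to prove that $\pi_0$ induces a complete subgraph, and here I would distinguish three cases for a pair of distinct primes $p,p'\in\pi_0$. If $p,p'\in\pi(q-1)$, then $pp'\mid q-1$, which divides the degree $\tfrac{r}{2}(q-1)$. If $p,p'\in\pi(q^2+1)$, then $pp'$ divides the degree $q^2+1$ itself. Finally, if $p\in\pi(q-1)$ while $p'\in\pi(q^2+1)$, the coprime factorization above places $p'$ in exactly one of $\pi(q-r+1)$, $\pi(q+r+1)$, and accordingly $pp'$ divides $(q-1)(q-r+1)$ or $(q-1)(q+r+1)$; both are character degrees. In each case $pp'$ divides some element of $\cd S$, so $p$ and $p'$ are adjacent.

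The substantive ingredient, and the only real obstacle, is the degree list itself: once it is in hand, everything reduces to elementary bookkeeping with those six numbers and their prime factorizations. In a self-contained argument this list is supplied by Suzuki's determination of the character table of ${\rm Sz}(q)$, so I would simply invoke it (as the statement does, citing \cite[Theorem~4.1]{W}) rather than reconstruct it.
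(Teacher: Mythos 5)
Your argument is correct, and it is essentially the intended one: the paper states this lemma as a direct citation of \cite[Theorem~4.1]{W}, whose proof is exactly this bookkeeping with Suzuki's character degree list $1$, $q^2$, $q^2+1$, $(q-1)(q\pm r+1)$, $\tfrac{r}{2}(q-1)$ together with the factorization $q^2+1=(q-r+1)(q+r+1)$ and the coprimality observations you make. There is nothing to add; the only external input is the character table of ${\rm Sz}(q)$, which both you and the paper rightly invoke rather than rederive.
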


\begin{theorem}[\cite{W}]
    \label{CDsimple}
    Let $G$ be  a non-abelian simple group.
    \begin{enumeratei}
    \item If  $\Delta(G)$ is not a  complete graph, then
    $G$ is isomorphic to one of the following groups:
    ${\rm M}_{11}$, ${\rm M}_{23}$, ${\rm J}_1$, ${\rm A}_5 \cong {\rm PSL}_2(4) \cong {\rm PSL}_2(5)$,
    ${\rm A}_6 \cong {\rm PSL}_2(9)$, ${\rm A}_8 \cong {\rm PSL}_4(2)$, ${\rm PSL}_2(t^a)$ ($t$ prime, $t^a\geq 4$), ${\rm PSL}_3(t^a)$ for suitable values of \(t^a\) ($t$ prime), ${\rm PSU}_3(t^{2a})$ for suitable values of \(t^{a}\) ($t$ prime, $t^a> 2$),  ${\rm Sz}(2^a)$ ($a$ odd, $a \geq 3$). 
   \item Either  $\Delta(G)$ is  $2$-connected or  $G$ is isomorphic to one of the following groups:
     ${\rm M}_{11}$, ${\rm J}_1$, ${\rm PSL}_2(t^a)$ ($t$ prime, $t^a\geq 4$), ${\rm PSL}_3(4)$, ${\rm Sz}(2^a)$ ($a$ odd, $a \geq 3$ and $2^a-1$  a  prime number).
     \item $\Delta(G)$ is disconnected if and only if $G \cong {\rm PSL}_2(t^a)$ ($t$ prime, $t^a\geq 4$).
   \end{enumeratei}
  \end{theorem}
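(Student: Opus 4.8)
The plan is to run a case analysis over the non-abelian simple groups furnished by the classification of finite simple groups, since all three assertions are really statements about which simple groups produce small anomalies in an otherwise complete degree graph. Before splitting into cases I would record two uniform structural facts. First, by Proposition~\ref{Ito} a prime $p$ fails to be a vertex of $\Delta(G)$ exactly when $G$ has a normal abelian Sylow $p$-subgroup; as $G$ is simple and non-abelian, no Sylow subgroup is normal, so $\V{G}=\pi(G)$. Second, by Burnside's $p^aq^b$-theorem every non-abelian simple group has at least three distinct prime divisors, whence $|\V{G}|\geq 3$. Combining this with the criterion recalled in the Preliminaries (a graph on at least three vertices possessing two complete vertices is $2$-connected), I note that a \emph{complete} degree graph is automatically $2$-connected, hence connected. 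Thus establishing completeness of $\Delta(G)$ settles all three parts for that $G$ at once, and the exceptional groups in (a) and (b), as well as the disconnected examples in (c), can only be found among the simple groups whose degree graph is not complete.

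The bulk of the work is to prove that, outside an explicit short list, $\Delta(G)$ is complete. I would organise this by family and Lie rank. For the groups of Lie type of large rank, and for the alternating groups $A_n$ with $n=7$ or $n\geq 9$, one exhibits enough irreducible character degrees — using the Steinberg character (whose degree is the full defining-characteristic part of $|G|$), unipotent and semisimple degrees read off from the generic degree polynomials in the Lie case, and hook/partition degrees in the alternating case — to show that any two primes in $\pi(G)$ divide a common degree. This is exactly where the classification is indispensable: it both bounds the families to be inspected and supplies the uniform degree information. The sporadic groups are then disposed of directly from their known character tables, leaving ${\rm M}_{11}$, ${\rm M}_{23}$ and ${\rm J}_1$ as the only sporadic groups with a non-complete graph; the exceptional isomorphisms $A_5\cong\PSL 4\cong\PSL 5$, $A_6\cong\PSL 9$ and $A_8\cong{\rm PSL}_4(2)$ must be tracked carefully so that these groups are counted with the correct family and not twice.

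What remains are the low-rank families $\PSL{t^a}$, ${\rm PSL}_3(t^a)$, ${\rm PSU}_3(t^{2a})$ and ${\rm Sz}(2^a)$, where completeness genuinely fails and the fine structure depends on number-theoretic properties of $t^a$. For the simple groups $\PSL{t^a}$ I would invoke Lemma~\ref{PSL2} (which also covers $\SL{t^a}$, relevant here only through the coincidence $\SL{2^a}=\PSL{2^a}$): it shows the graph splits off the isolated vertex $\{t\}$, hence is disconnected, and it describes the remaining complete or two-clique structure. This simultaneously places every $\PSL{t^a}$ in the list of (a), supplies the forward direction of (c), and shows these groups are never $2$-connected, so they belong to the list of (b). For ${\rm Sz}(2^a)$ I would use Lemma~\ref{Sz}: the vertex $2$ is joined only to $\pi(2^a-1)$ inside the clique $\pi_0$, so the graph is connected, and it is $2$-connected precisely when $2$ has at least two neighbours, i.e. unless $2^a-1$ is a (Mersenne) prime $r$, in which case $r$ is a cut-vertex; this yields the Suzuki entries in (a) and (b) and confirms that ${\rm Sz}(2^a)$ is always connected, consistent with (c). The groups ${\rm PSL}_3(t^a)$ and ${\rm PSU}_3(t^{2a})$ require the explicit character-degree lists of these groups: one checks that their graphs are connected (so they do not enter (c)), identifies for which parameters the graph is non-complete (entering (a)), and singles out ${\rm PSL}_3(4)$ as the only such group whose graph has a cut-vertex (entering (b)).

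The reverse implication in (c) — that a disconnected degree graph forces $G\cong\PSL{t^a}$ — then follows by elimination, since every other simple group has been shown either to have a complete graph or, among the low-rank exceptions, to be connected via Lemma~\ref{Sz} and the ${\rm PSL}_3/{\rm PSU}_3$ analysis. I expect the main obstacle to be precisely the uniform completeness argument for the generic groups of Lie type together with the parameter-dependent analysis of ${\rm PSL}_3(t^a)$ and ${\rm PSU}_3(t^{2a})$: deciding adjacency there amounts to deciding when the relevant degrees (built from $t$, $t^a-1$, $t^a+1$, $t^{2a}+t^a+1$, and the like) share prime divisors, which is a delicate blend of character-theoretic and elementary number-theoretic bookkeeping rather than a single clean computation.
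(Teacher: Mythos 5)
Your strategy is sound, but it diverges from the paper in scope: this theorem is attributed to \cite{W}, and the paper's proof of part (a) is simply a citation of White's classification of the simple groups with non-complete degree graph, while part (c) is quoted from \cite{LW}. The only genuinely new content in the paper's argument is part (b), which it obtains by inspecting the graphs of the groups on White's list. You instead propose to re-derive the whole of (a) by running through the classification of finite simple groups and proving completeness of $\Delta(G)$ for the generic families via Steinberg, unipotent/semisimple and hook-length degrees; that is essentially a reconstruction of \cite{W} itself, and it is the one part of your plan that is only sketched rather than executed (it is a full paper's worth of case analysis, which the citation buys for free). For part (b) your treatment of $\PSL{t^a}$ and ${\rm Sz}(2^a)$ matches the paper's use of Lemma~\ref{PSL2} and Lemma~\ref{Sz}, including the reduction of ``$2^a-1$ a prime power'' to ``$2^a-1$ and $a$ prime''; for ${\rm PSL}_3(t^a)$ and ${\rm PSU}_3(t^{2a})$ the paper is slicker than your appeal to explicit degree lists: it observes that every prime in $\pi((q+1)(q^2+q+1))$ (respectively $\pi((q+1)(q^2-q+1))$) is a complete vertex and that there are at least two of them, so the graph is $2$-connected by the criterion you yourself recalled. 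Two small points you should make explicit if you pursue your route: you must also verify that $\Delta({\rm A}_8)$ and $\Delta({\rm M}_{23})$ are $2$-connected (they appear in (a) but must be excluded from (b)), and your elimination argument for the converse of (c) needs the connectedness of the ${\rm PSL}_3$/${\rm PSU}_3$ graphs for \emph{all} admissible parameters, not just the non-complete ones.
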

  \begin{proof}
    The list (a)  of the simple groups with a  non-complete degree graph is given in~\cite{W}, as well as the structure of the relevant graphs. In particular, one can immediately check that $\Delta({\rm{A}}_8)$ and $\Delta({\rm M}_{23})$ are $2$-connected.  
If $G =  {\rm PSL}_3(q)$, where $q= t^a \neq 4$, then  $\Delta(G)$  has vertex set $\{t \} \cup \pi_0$, where
    $\pi_0 = \pi((q-1)(q+1)(q^2+q+1))$; the set $\pi_0$ induces a complete subgraph of $\Delta(G)$, while the vertex $t$ is adjacent only to
    the primes in $\pi_1 = \pi((q+1)(q^2+q+1))$; so, all the vertices in $\pi_1$ are complete vertices of $\Delta(G)$ and,
    since $|\pi_1| \geq 2$, $\Delta(G)$ is $2$-connected.
    If $G =  {\rm PSU}_3(q^2)$, where $q= t^a >2$, then  $\Delta(G)$  has vertex set $\{t \} \cup \pi_0$, where
    $\pi_0 = \pi((q-1)(q+1)(q^2-q+1))$; the set $\pi_0$ induces a complete subgraph of $\Delta(G)$, while the vertex $t$ is adjacent only to
    the primes in $\pi_1 = \pi((q+1)(q^2-q+1))$; again, all the vertices in $\pi_1$ are complete vertices of $\Delta(G)$ and,
    since $|\pi_1| \geq 2$, $\Delta(G)$ is $2$-connected.
    Finally, if $G =  {\rm Sz}(q)$, where $q= 2^a \geq 8$ and $a$ is odd, then  by Lemma~\ref{Sz}  all the vertices in
    $\pi_1= \pi(q-1)$ are complete vertices of $\Delta(G)$.
    Hence, $\Delta(G)$ is not  $2$-connected if and only if $2^a-1$ is a prime power: as it is well known (see for instance \cite[Proposition 3.1]{MW}), this implies that both  $2^a-1$ 
    and $a$ are  prime numbers. This proves (b). Finally, (c) is Theorem 2.1 of~\cite{LW}. 
  \end{proof}

  
We recall that the character degree graphs of ${\rm M}_{11}$, ${\rm J}_1$ and ${\rm PSL}_3(4)$ are as follows (\cite{atlas}). 
  
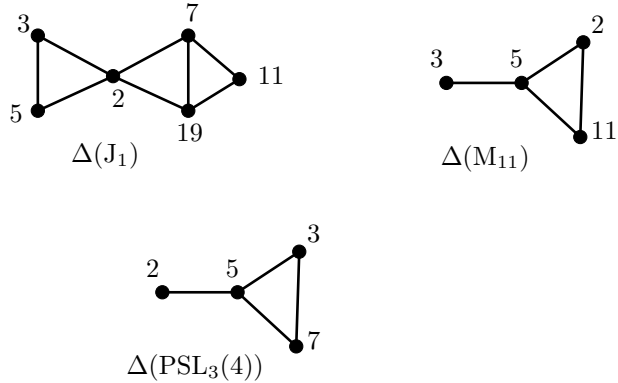
\begin{figure}[h]\label{JMP}
\caption{Degree graphs of ${\rm J}_1$, ${\rm M_{11}}$, ${\rm{PSL}}_3(4)$.}
\definecolor{ududff}{rgb}{0.30196078431372547,0.30196078431372547,1.}
\begin{minipage}{0.37\textwidth}
\vspace*{-7mm}
\begin{tikzpicture}[line cap=round,line join=round,>=triangle 45,x=1.0cm,y=1.0cm]
\clip(-0.08,1.54) rectangle (4.46,5.92);
\draw [line width=1.pt] (1.,4.)-- (1.,3.);
\draw [line width=1.pt] (2.,3.46)-- (1.,3.);
\draw [line width=1.pt] (1.,4.)-- (2.,3.46);
\draw [line width=1.pt] (2.,3.46)-- (3.,4.);
\draw [line width=1.pt] (3.,4.)-- (3.68,3.42);
\draw [line width=1.pt] (3.,3.)-- (3.68,3.42);
\draw [line width=1.pt] (2.,3.46)-- (3.,3.);
\draw [line width=1.pt] (3.,4.)-- (3.,3.);
\draw (0.6,4.4) node[anchor=north west] {$3$};
\draw (0.5,3.2) node[anchor=north west] {$5$};
\draw (1.86,3.4) node[anchor=north west] {$2$};
\draw (2.84,4.55) node[anchor=north west] {$7$};
\draw (2.72,2.95) node[anchor=north west] {$19$};

\draw (1.32,2.70) node[anchor=north west] {$\Delta({\rm{J}}_1)$};

\draw (3.8,3.7) node[anchor=north west] {$11$};
\begin{scriptsize}
\draw [fill=black] (1.,4.) circle (2.5pt);
\draw [fill=black] (1.,3.) circle (2.5pt);
\draw [fill=black] (2.,3.46) circle (2.5pt);
\draw [fill=black] (3.,4.) circle (2.5pt);
\draw [fill=black] (3.,3.) circle (2.5pt);
\draw [fill=black] (3.68,3.42) circle (2.5pt);
\end{scriptsize}
\end{tikzpicture}
\end{minipage}  \begin{minipage}{0.26\textwidth}
\begin{tikzpicture}[line cap=round,line join=round,>=triangle 45,x=1.0cm,y=1.0cm]
\clip(0.46,3.82) rectangle (3.98,6.20);
\draw [line width=1.pt] (1.,5.)-- (2.,5.);
\draw [line width=1.pt] (2.,5.)-- (2.82,5.54);
\draw [line width=1.pt] (2.82,5.54)-- (2.78,4.28);
\draw [line width=1.pt] (2.,5.)-- (2.78,4.28);
\draw (0.66,5.56) node[anchor=north west] {$3$};
\draw (1.72,5.56) node[anchor=north west] {$5$};
\draw (2.8,6.02) node[anchor=north west] {$2$};
\draw (2.8,4.6) node[anchor=north west] {$11$};

\draw (0.80,4.30) node[anchor=north west] {$\Delta({\rm{M}}_{11})$};

\begin{scriptsize}
\draw [fill=black](1.,5.) circle (2.5pt);
\draw[fill=black] (2.,5.) circle (2.5pt);
\draw [fill=black] (2.82,5.54) circle (2.5pt);
\draw [fill=black](2.78,4.28) circle (2.5pt);
\end{scriptsize}
\end{tikzpicture}
\end{minipage}   \begin{minipage}{0.36\textwidth}
\begin{tikzpicture}[line cap=round,line join=round,>=triangle 45,x=1.0cm,y=1.0cm]
\clip(0.46,3.82) rectangle (3.98,5.92);
\draw [line width=1.pt] (1.,5.)-- (2.,5.);
\draw [line width=1.pt] (2.,5.)-- (2.82,5.54);
\draw [line width=1.pt] (2.82,5.54)-- (2.78,4.28);
\draw [line width=1.pt] (2.,5.)-- (2.78,4.28);
\draw (0.66,5.56) node[anchor=north west] {$2$};
\draw (1.72,5.56) node[anchor=north west] {$5$};
\draw (2.8,6.02) node[anchor=north west] {$3$};
\draw (2.8,4.6) node[anchor=north west] {$7$};
\draw (0.4,4.3) node[anchor=north west] {$\Delta({\rm{PSL}}_3(4))$};

\begin{scriptsize}
\draw[fill=black](1.,5.) circle (2.5pt);
\draw  [fill=black](2.,5.) circle (2.5pt);
\draw [fill=black] (2.82,5.54) circle (2.5pt);
\draw  [fill=black] (2.78,4.28) circle (2.5pt);
\end{scriptsize}
\end{tikzpicture}
\end{minipage}
\end{figure}
  

\medskip

Recall that, for $a$ and $n$ integers larger than $1$, a prime divisor $q$ of $a^n-1$ is called a \emph{primitive prime divisor} if $q$ does not divide $a^b -1$ for all $1 \leq b <n$. In this case, $n$ is the order of $a$ modulo $q$, so $n$ divides $q-1$. 
It is known (\cite[Theorem~6.2]{MW}) that $a^n - 1$ always has primitive prime divisors except when $n = 2$ and $a= 2^c -1$ for some integer $c$, or when $n=6$ and $a= 2$.

We will make use of the following  well-known facts about linear groups.

\begin{lemma}\label{singer}
  Let $t$ be a prime number and $a  \geq 2$ an integer. Let $p$ be a primitive prime divisor of $t^a -1$ and let $X$ 
  be a subgroup of $G = {\rm GL}_a(t)$ such that $|X| = p$. Then $\cent GX$ is a cyclic group of order $t^a -1$.  
\end{lemma}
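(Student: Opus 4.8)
The plan is to realize $\cent{G}{X}$ as the unit group of an endomorphism ring which turns out to be a field with $t^a$ elements. Since $X$ has prime order $p$, I fix a generator $x$ of $X$; then $\cent{G}{X}=\cent{G}{x}$, so it suffices to compute the centralizer of the single element $x$. I regard the natural module $V=\F_t^a$ as an $\F_t\gen{x}$-module via the embedding $X\le \GL{a}{t}$. Because $p$ divides $t^a-1$ we have $p\neq t$, so $x$ is a $p'$-element: the action of $\gen{x}$ on $V$ is coprime, hence completely reducible, and $x$ is a semisimple transformation.

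Next I would determine the irreducible constituents of $V$. The minimal polynomial of $x$ over $\F_t$ divides $T^p-1=(T-1)\Phi_p(T)$, where $\Phi_p$ is the $p$-th cyclotomic polynomial. Over $\F_t$ each irreducible factor of $\Phi_p$ has degree equal to the multiplicative order of $t$ modulo $p$; since $p$ is a primitive prime divisor of $t^a-1$, that order is exactly $a$. Thus every nontrivial irreducible $\F_t\gen{x}$-module (one on which $x$ acts nontrivially) has $\F_t$-dimension $a$, whereas the trivial module has dimension $1$. As $x\neq 1$, the module $V$ has at least one nontrivial constituent, of dimension $a$; but $\dim_{\F_t}V=a$, which forces $V$ to be exactly this single irreducible constituent. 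Hence $V$ is an irreducible $\F_t\gen{x}$-module of dimension $a$, on which $x$ acts as multiplication by a primitive $p$-th root of unity.

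Finally, since $V$ is irreducible, Schur's lemma gives that $D:=\mathrm{End}_{\F_t\gen{x}}(V)$ is a finite division ring, hence a field; concretely $D=\F_t[x]\cong \F_t[T]/(m(T))\cong \F_{t^a}$, where $m$ is the irreducible degree-$a$ minimal polynomial of $x$. An element of $\GL{a}{t}$ centralizes $x$ precisely when it is an $\F_t\gen{x}$-module automorphism of $V$, that is, an invertible element of $D$; therefore $\cent{G}{x}=D^\times\cong \F_{t^a}^\times$, which is cyclic of order $t^a-1$, as claimed. The only genuinely delicate point is the dimension count in the previous paragraph: it is exactly the primitivity of $p$ (forcing the constituent degree to equal $a=\dim V$) that pins $V$ down as a single irreducible module and thereby produces the nonsplit torus $\F_{t^a}^\times$, while the remaining steps are routine applications of semisimplicity and Schur's lemma.
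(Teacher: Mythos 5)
Your proof is correct. It shares its opening move with the paper's argument: both observe that the primitivity of $p$ forces $X$ to act irreducibly on the natural module $V$, and both then invoke Schur's lemma to see that $\mathbb{K}=\mathrm{End}_{\mathbb{F}_t[X]}(V)$ is a finite field containing $\cent GX$ in its multiplicative group, whence $\cent GX$ is cyclic. Where you diverge is in the finishing step. The paper stops short of computing $\mathbb{K}$ and instead imports the Singer cycle: citing \cite[II.7.3]{Hu}, it takes a maximal cyclic subgroup $C\leq G$ of order $t^a-1$ with $\cent GC=C$, notes that the $p$-part of $|G|$ equals the $p$-part of $|C|$ so that $X$ may be conjugated into $C$, and concludes $C\leq \cent GX$ and hence $\cent GX=C$ by maximality. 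You instead identify the commutant explicitly: the constituent-degree count shows $V$ is one-dimensional over $\mathbb{F}_t[x]\cong\mathbb{F}_{t^a}$, so $\mathbb{K}=\mathbb{F}_t[x]$ and $\cent GX$ is \emph{all} of $\mathbb{K}^{\times}$, giving the upper and lower bounds on $|\cent GX|$ simultaneously. Your route is more self-contained (no appeal to the existence and maximality of Singer cycles, no conjugation argument) and in effect reconstructs the Singer cycle as $\mathbb{F}_t[x]^{\times}$; the paper's route is shorter on the page because it outsources exactly that construction to Huppert. Both are complete and correct.
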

\begin{proof}
  Let $V$ be the natural module for $G$. 
  We observe that, as $p$ does not divide $t^b -1$ for every positive integer $b <a$, the subgroup  $X$ acts irreducibly on $V$.
  Hence,  by Schur's lemma, the ring $\mathbb{K} = {\rm{End}}_{\mathbb{F}_t[X]}(V)$ of the $\mathbb{F}_t[X]$-endomorphisms of $V$  
  is a finite field and, since $\cent GX$ is contained in the multiplicative group of $\mathbb{K}$, it follows that $\cent GX$
  is cyclic.
  By~\cite[II.7.3]{Hu} $G$  contains a cyclic subgroup $C$ of order $t^{a} - 1$ (a Singer cycle) and  $\cent GC = C$; so,
  $C$ is a maximal cyclic subgroup of $G$. 
  Observing that the $p$-part of $|G|$ coincides with the $p$-part of $|C|$, we can assume that $X$ is a subgroup of $C$.
  Hence $C \subseteq \cent GX$ and, since $\cent GX$ is cyclic, we conclude that $\cent GX = C$.
\end{proof}

\begin{lemma}\label{clg}
  For a positive integer $a$ and a prime $t$, let $G = {\rm GL}_{2a}(t)$ and let $K \leq L \leq G$ be such that
  $L \cong {\rm GL}_{2}(t^a)$ and   $K \cong {\rm SL}_{2}(t^a)$. Then $\cent GK = \zent L$ is a cyclic group of order $t^a -1$.  
\end{lemma}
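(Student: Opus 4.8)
The plan is to view $G = \GL{2a}{t}$ as acting on its natural module $V = \F_t^{2a}$, and to exploit the fact that $L \cong \GL{2}{t^a}$ naturally endows $V$ with the structure of a $2$-dimensional vector space $W$ over the larger field $\F_{t^a}$. Concretely, the embedding $L \hookrightarrow G$ arises from restricting scalars: $W \cong \F_{t^a}^2$ becomes $V \cong \F_t^{2a}$ by regarding $\F_{t^a}$ as an $a$-dimensional $\F_t$-space. First I would record that $\zent{L}$ consists of the scalar matrices $\lambda \cdot \mathrm{id}_W$ with $\lambda \in \F_{t^a}^\times$; since $K \cong \SL{2}{t^a}$ acts absolutely irreducibly on $W$ over $\F_{t^a}$, these scalars lie in $\cent{G}{K}$, and $\zent{L} \cong \F_{t^a}^\times$ is cyclic of order $t^a - 1$. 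This gives the inclusion $\zent L \sbs \cent GK$ together with the claimed order, so the content of the lemma is the reverse inclusion $\cent GK \sbs \zent L$.

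The key step is to pin down $\mathrm{End}_{\F_t[K]}(V)$. The natural module $W$ for $K \cong \SL{2}{t^a}$ over $\F_{t^a}$ is absolutely irreducible, hence $\mathrm{End}_{\F_{t^a}[K]}(W) = \F_{t^a}$ by Schur's lemma. I would then argue that, because the $\F_t[K]$-module $V$ is obtained from the absolutely irreducible $\F_{t^a}[K]$-module $W$ by restriction of scalars, its $\F_t$-endomorphism algebra is exactly $\F_{t^a}$ acting as field multiplication; that is, $\mathrm{End}_{\F_t[K]}(V) \cong \F_{t^a}$, and every such endomorphism is a scalar $\lambda \cdot \mathrm{id}_W$ with $\lambda \in \F_{t^a}$. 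Now any $g \in \cent GK$ is an invertible $\F_t[K]$-endomorphism of $V$, hence equals some $\lambda \cdot \mathrm{id}_W$ with $\lambda \in \F_{t^a}^\times$; but such a map is precisely an element of $\zent{L}$. This yields $\cent GK \sbs \zent L$ and completes the proof.

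The main obstacle I expect is the endomorphism-algebra computation $\mathrm{End}_{\F_t[K]}(V) \cong \F_{t^a}$: one must be careful that restricting scalars can a priori enlarge the endomorphism ring beyond $\F_{t^a}$ if $W$ were to fail to remain irreducible over $\F_t$, or if $W$ admitted extra $\F_t$-semilinear symmetries. The clean way to handle this is to note that $V$ remains an \emph{irreducible} $\F_t[K]$-module (the $\F_{t^a}$-irreducible module $W$ has no proper $K$-invariant $\F_t$-subspace, since any such subspace would generate a proper $\F_{t^a}$-submodule) and that its endomorphism field, which by Schur's lemma is some extension of $\F_t$, must coincide with the field $\F_{t^a}$ that already acts. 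Since $V$ has $\F_t$-dimension $2a$ and $W$ has $\F_{t^a}$-dimension $2$, a dimension count forces the endomorphism field to be exactly $\F_{t^a}$, with no room for a larger field; this rules out the unwanted enlargement and secures the argument.
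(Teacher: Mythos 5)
Your route is genuinely different from the paper's. The paper argues via Singer cycles: it takes a Singer cycle $C$ of $L$ (cyclic of order $t^{2a}-1$, maximal cyclic in $G$), picks $X\le K\cap C$ of prime order a primitive prime divisor of $t^{2a}-1$, applies Lemma~\ref{singer} to get $\cent GK\le\cent GX=C\le L$, and then uses $\cent LK=\zent L$; a separate argument handles the Zsigmondy exception $t^{2a}=2^6$ (and the trivial case $a=1$). Your approach through $\mathrm{End}_{\F_t[K]}(V)$ avoids primitive prime divisors and the exceptional case entirely, and the overall plan is viable; the inclusion $\zent L\subseteq\cent GK$ and the reduction to computing the endomorphism algebra are fine. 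However, two of the justifications you give for the key step are incorrect as written.

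First, the irreducibility of $V$ as an $\F_t[K]$-module. You assert that a proper $K$-invariant $\F_t$-subspace of $W$ ``would generate a proper $\F_{t^a}$-submodule'': this implication is false, since a proper $\F_t$-subspace of $W$ can perfectly well span all of $W$ over $\F_{t^a}$ (already an $\F_t$-hyperplane can do so). The conclusion is true but needs a different argument; the quickest here is that $\SL{t^a}$ acts transitively on the nonzero vectors of its natural module, so any nonzero $K$-invariant $\F_t$-subspace contains the whole orbit $W-\{0\}$ and hence equals $W$ (alternatively, invoke that the $a$ Frobenius twists of the natural module are pairwise non-isomorphic). Second, the concluding dimension count: the endomorphism field $E$ is an extension of $\F_{t^a}$ with $[E:\F_t]$ dividing $\dim_{\F_t}V=2a$, so $[E:\F_t]\in\{a,2a\}$ --- there \emph{is} room for $\F_{t^{2a}}$, contrary to what you claim. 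You must exclude it separately, e.g.\ by observing that $\dim_EV=1$ would embed $K$ into $E^{\times}$ and force $K$ to be abelian. Finally, note that you assume without comment that $L$ sits inside $G$ as the standard restriction-of-scalars copy of ${\rm GL}_2(t^a)$, i.e.\ that $V|_K$ is the natural $\F_{t^a}[K]$-module read over $\F_t$; the lemma only posits abstract isomorphism types, and the paper's Singer-cycle argument never needs to identify the embedding. In the paper's applications the embedding is indeed the natural one, but if your proof relies on this you should either say so or justify it.
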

\begin{proof}
  Let  $C$ be a Singer cycle of $L$, so, $|C| = t^{2a} - 1$  and  $C$ is a maximal cyclic subgroup of $G$.
  Assume first that there exists a primitive prime divisor $p$ of $t^{2a}-1$. Hence,  $p$ divides $|K|$ and we consider a subgroup
  $X \leq K \cap C$ with $|X| = p$.
As $C \leq \cent GX$, by Lemma~\ref{singer} we conclude that $\cent GX = C$.
  Thus, $\cent GK \leq \cent GX \leq L$ and it is well known that $\cent GK = \cent LK = \zent L$ is cyclic of order $t^a -1$.

  If there is no primitive prime divisor of $t^{2a}-1$, then either $a=1$ and  we are done as $L = G$, or
  $t^{2a} = 2^6$ and we can argue as above with a cyclic subgroup $X$ of $K$ such that  $|X| = 9$.
\end{proof}

As another important reference, we recall the characterization of the non-solvable groups whose character degree graph is disconnected (\cite{LW}).

\begin{theorem}[\mbox{\cite[Theorem 4.1 and Theorem~6.3]{LW}}]
\label{LewisWhite}
Let \(G\) be a non-solvable group. Then:
\begin{description}
\item[(I)] \(\Delta(G)\) has two connected components if and only if there exist normal subgroups \(N\subseteq K\) of $G$ such that, setting \(C/N=\cent{G/N}{K/N}\), the following conditions hold.
  \begin{enumeratei}
  \item \(K/N\cong\PSL{t^a}\), where \(t\) is a prime with \(t^a\geq 4\).
  \item \(G/K\) is abelian.
  \item If \(t^a\neq 4\), then \(t\) does not divide \(|G/CK|\).
  \item If \(N\neq 1\), then either \(K\cong\SL{t^a}\) or there exists a minimal normal subgroup \(L\) of \(G\) such that \(K/L\cong\SL{t^a}\) and \(L\) is isomorphic to the natural module for \(K/L\).
  \item If \(t=2\) or \(t^a=5\), then either \(CK\neq G\) or \(N\neq 1\).
  \item If \(t=2\) and \(K\) is an in {\rm{(d)}} in the case \(K\not\cong \SL{t^a}\), then every non-principal character in \(\irr L\) extends to its inertia subgroup in \(G\).
\end{enumeratei}
\item[(II)] \(\Delta(G)\) has three connected components if and only if $G \cong \SL{2^a} \times A$, where $a \geq 2$ and
  $A$ is an abelian group. 
\end{description}
\end{theorem}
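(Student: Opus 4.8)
The plan is to use that, by the Manz--Staszewski--Willems bound, $\Delta(G)$ has at most three connected components, and to treat the two- and three-component cases separately, leaning on the classification of finite simple groups through Theorem~\ref{CDsimple}. Since the degree graph of any subnormal section of $G$ is a subgraph of $\Delta(G)$, and since an abelian normal Sylow subgroup contributes no vertex by Proposition~\ref{Ito}, I would first locate the solvable residual $K=K(G)$ and analyse the non-abelian chief factors. By Theorem~\ref{CDsimple}(c) the only non-abelian simple groups with a disconnected degree graph are the $\PSL{t^a}$, and a disconnection of $\Delta(G)$ forces the non-solvable part of $G$ to be built on a single such factor; this is what makes $\PSL{t^a}$ appear in (a) and in the hypothesis of part (II).

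For the three-component statement (II), I would exploit the rigidity of having three components. By Lemma~\ref{PSL2}(a) the group $\SL{2^a}$ already realizes three complete components $\{2\}$, $\Delta_{\pi_+}$, $\Delta_{\pi_-}$, so the task is to show that no further degree of $G$ can bridge two of them. I would argue that any non-central action of the solvable radical, any outer (e.g.\ field-automorphism) contribution, or any non-split extension produces a character degree divisible by primes from two different components, forcing a merge; ruling these out leaves $G$ a central, hence direct, product $\SL{2^a}\times A$ with $A$ abelian. The converse is immediate, since an abelian direct factor adds neither vertices nor edges.

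The core is the two-component case (I). For the ``only if'' direction I would take $N$ to be an appropriate $G$-normal subgroup inside $K$ with $K/N\cong\PSL{t^a}$, giving (a); condition (b) is automatic since $K$ is the solvable residual; and (c) follows because a contribution of the defining characteristic $t$ from $\out{\PSL{t^a}}$ would, via Ito--Michler, create an edge at $t$ and so reconnect the isolated vertex $\{t\}$ of Lemma~\ref{PSL2}. The delicate point is (d): when $N\neq 1$ I would analyse $K$ as a (possibly non-split) extension of $\PSL{t^a}$ by $N$ and, using coprime action and the modular representation theory of $\SL{t^a}$, show that either $K$ is the full covering group $\SL{t^a}$ or the relevant minimal normal subgroup $L$ is forced to be the natural module. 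Excluding every other $\F_t[\SL{t^a}]$-module by an orbit-and-degree analysis of the semidirect product $L\rtimes\SL{t^a}$ is, I expect, the main technical obstacle of the whole theorem.

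Finally, for the ``if'' direction and for conditions (e) and (f), I would compute the degrees of $G$ by Clifford theory over $L$. The stabilizer in $\SL{t^a}$ of a non-principal $\lambda\in\irr L$ is an abelian unipotent subgroup of order $t^a$, so the characters of $G$ over such $\lambda$ have degrees divisible by the index $(t^a-1)(t^a+1)$. For $t=2$ this number is odd, so it \emph{merges} the a priori separate cliques $\pi(t^a-1)$ and $\pi(t^a+1)$ of Lemma~\ref{PSL2}(a) while leaving the vertex $2$ untouched, turning three components into the required two; condition (e) (for $t=2$ or $t^a=5$) guarantees that such merging data is genuinely present rather than leaving three components. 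The role of (f) is to rule out an \emph{even} degree over $\lambda$: when $\lambda$ extends to its inertia subgroup, Gallagher's theorem writes the degrees over $\lambda$ as $(t^a-1)(t^a+1)\,\beta(1)$ with $\beta$ controlled by the inertia quotient, and this is what forbids an edge at the vertex $2$ that would otherwise collapse $\Delta(G)$ to a single component. The ``if'' direction then reduces to assembling these degree sets and checking, with Lemma~\ref{PSL2} and Ito--Michler, that exactly two components survive in every listed case.
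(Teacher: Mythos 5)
This statement is not proved in the paper: it is quoted verbatim from Lewis and White (\cite{LW}, Theorems 4.1 and 6.3), so there is no internal proof to compare your plan against. Judged as a reconstruction of the Lewis--White argument, your outline does hit the right landmarks: the Manz--Staszewski--Willems bound of three components, the fact that \(\PSL{t^a}\) is the only non-abelian simple group with disconnected degree graph, Clifford theory over \(L\) with point stabilizer a Sylow \(t\)-subgroup of order \(t^a\) and orbit size \((t^a-1)(t^a+1)\), and the intended roles of conditions (e) and (f).

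There is, however, a concrete error and two substantial deferrals. The error: condition (b) is \emph{not} ``automatic since \(K\) is the solvable residual.'' The solvable residual only gives that \(G/K\) is solvable; the abelianness of \(G/K\) is a genuine conclusion of the classification and has to be extracted from the disconnectedness hypothesis (roughly, a nonlinear irreducible character of \(G/K\), inflated to \(G\) and combined via Gallagher with an extension of the Steinberg character or with characters over \(L\), produces degrees whose prime divisors bridge the would-be components). Note also that the theorem as stated does not even posit \(K\) as the solvable residual --- that identification is an observation made afterwards in Remark~\ref{verticesdisc}. The deferrals: the step you yourself call the main technical obstacle, namely excluding every \(\F_t[\SL{t^a}]\)-module other than the natural one in (d), and the case-by-case verification of the ``if'' direction, are announced but not carried out. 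So the proposal is a plausible roadmap for the Lewis--White theorem, but it is not a proof, and as written it would need the (b) step repaired before the rest could be filled in.
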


\begin{rem}\label{verticesdisc}
Let \(G\) be a non-solvable group whose character degree graph \(\Delta(G)\) has two connected components. Hence the structure of \(G\) is described by the conditions (a)--(f) in the above statement; observe also that (adopting the notation of Theorem~\ref{LewisWhite}) \(K=G'\) coincides with the last term in the derived series of \(G\), whereas \(C\) coincides with the solvable radical \(R\) of \(G\) and \(N=K\cap R\). Then, we claim that \emph{the vertex set of \(\Delta(G)\) consists of the primes in \(\pi(G/R)\)}. (Note that this claim is obviously true in the case when \(\Delta(G)\) has three connected components, so it holds in fact whenever \(\Delta(G)\) is disconnected.)

In fact, since \(G/R\) is an almost-simple group, every prime in \(\pi(G/R)\) is a vertex of \(\Delta(G/R)\), hence of \(\Delta(G)\). 
On the other hand, let \(u\) be a prime that does not lie in \(\pi(G/R)\), and let \(U\) be a Sylow \(u\)-subgroup of \(G\) (which lies in \(R\)). Assume that \(G\) is as in (d) of Theorem~\ref{LewisWhite} (so \(N\neq 1\)), in the case when there exists a minimal normal subgroup \(L\) of \(G\) such that \(K/L\cong\SL{t^a}\) and \(L\) is isomorphic to the natural module for \(K/L\). Since \(R/N\cong KR/K\subseteq G/K\) is abelian, \(UN/N\) is abelian and normal in \(R/N\), hence it is an abelian normal Sylow \(u\)-subgroup of \(G/N\) and, by Proposition~\ref{Ito}, \(u\not\in\V{G/N}\) (note that, by this argument, our claim is proved if \(N\) is assumed to be trivial instead). As \(N/L\) is a central subgroup of order at most \(2\) of \(G/L\), it is immediate to see that \(UL/L\) is abelian and normal in \(R/L\) (hence we are done if we are in case (d) with \(K\cong\SL{t^a}\)). Finally, since \([K/N,R/N]=1\), clearly we have \(\cent{K/N}U=K/N\); but in fact, by coprimality, we have \(K/N\cong(K/L)/(N/L)=\cent{K/L}{U}/(N/L)\), whence \(K/L=\cent{K/L}{U}\). In particular, setting \(\o{G}=G/\cent G L\) and adopting the bar convention, we get \([\o U, \o K]=1\) because obviously \(L\subseteq\cent G L\). Now \(\o G\) embeds in \({\rm{GL}}_{2a}(t)\), and \(\o K\) is a subgroup of \(\o G\) isomorphic to \(\SL{t^a}\): hence we are in a position to apply Lemma~\ref{clg} obtaining that \(\o U\) must be trivial, because \(t^a-1\) is a divisor of $|G/R|$ and is therefore coprime with \(u\).  It easily follows that \(U\) is an abelian normal Sylow \(u\)-subgroup of \(G\), so \(u\not\in\V G\), and our claim is proved.
\end{rem}





The following result will come into play later on.

\begin{theorem}\label{MoretoTiep}
  Let $G$ be an almost-simple group with socle $S$, and let $\delta = \pi(G) - \pi(S)$.
  If $\delta \neq \emptyset$, then $S$ is a simple group of Lie type, and every vertex in \(\delta\) is adjacent to every other vertex of $\Delta(G)$ that is not the characteristic of $S$.
\end{theorem}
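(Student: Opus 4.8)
The plan is to first invoke the classification of finite simple groups to force $S$ to be of Lie type, and then, for each $r\in\delta$ and each vertex $s\neq p$ of $\Delta(G)$ (where $p$ denotes the defining characteristic of $S$), to produce an irreducible character of $G$ of degree divisible by $rs$. For the reduction: if $S$ is alternating or sporadic then $\out S$ has order dividing $4$, and in every such case each prime divisor of $|\out S|$ already divides $|S|$; hence $\pi(G)=\pi(S)$ and $\delta=\emptyset$. Thus $\delta\neq\emptyset$ forces $S$ to be of Lie type, say over $\F_{q}$ with $q=p^{f}$. Fixing $r\in\delta$, I note that the prime divisors of the diagonal and graph parts of $\out S$ all divide $|S|$ (indeed $2\in\pi(S)$ always, and the triality prime $3$ divides $|S|$ when it occurs), while $r\nmid|S|$; hence the order-$r$ element produced by Cauchy's theorem in $G/S$ projects to an order-$r$ element of the cyclic group of field automorphisms. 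So $G$ contains an element $g$ inducing a field automorphism $\sigma$ of order $r$ on $S$, with $\cent{S}{\sigma}$ a subgroup of the same type over $\F_{p^{f/r}}$; and since $r\nmid|S|$, the Sylow $r$-subgroup $P$ of $G$ is cyclic, meets $S$ trivially, and acts \emph{coprimely} on $S$.

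The core of the argument is a Clifford-theoretic reduction. It suffices to produce $\theta\in\irr S$ with $s\mid\theta(1)$ for which $r\mid[G:I_G(\theta)]$: writing $I=I_G(\theta)$, every $\chi\in\irr G$ lying over $\theta$ satisfies both $\theta(1)\mid\chi(1)$ and $[G:I]\mid\chi(1)$, whence $s\mid\chi(1)$ and $r\mid\chi(1)$, so that $rs\mid\chi(1)$ and the edge $rs$ lies in $\E G$ (here $r\neq s$ as distinct vertices). A sufficient condition for $r\mid[G:I]$ is that the cyclic $r$-group $P$ act without fixed points on the $G$-orbit of $\theta$; equivalently, no $G$-conjugate of $\theta$ is $\sigma$-invariant, for then $I/S$ contains no Sylow $r$-subgroup of $G/S$. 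By Proposition~\ref{Ito}, every prime in $\pi(S)$ is a vertex of $\Delta(S)\subseteq\Delta(G)$, since a non-abelian simple group has no normal Sylow subgroup, so such a degree-$s$-divisible $\theta$ exists in abundance; coprimality confirms that the \emph{$\sigma$-invariant} characters are precisely the useless ones, as each extends to $\langle S,\sigma\rangle$ with unchanged degree and therefore never supplies the factor $r$.

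It remains to show that $\sigma$ does not fix every $\theta\in\irr S$ with $s\mid\theta(1)$, and this is the main obstacle, where the detailed representation theory of groups of Lie type is unavoidable. I would argue through Lusztig's Jordan decomposition: the semisimple characters of $S$ are parametrised by the $S^{*}$-classes of semisimple elements of the dual group, on which $\sigma$ acts via the dual Frobenius $F_0$, the degree attached to a class $[t]$ being the $p'$-part of $[S^{*}:\cent{S^{*}}{t}]$. Since $\cent{S}{\sigma}$ is a proper subfield subgroup, not every semisimple class is $F_0$-stable, and one selects $t$ whose class is moved by $F_0$ and for which $s$ divides the relevant index, yielding the required non-$\sigma$-invariant character of $s$-divisible degree. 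The identical mechanism settles the case $s\in\delta$: there one instead arranges the orbit of a semisimple class under the two dual field automorphisms of orders $r$ and $s$ to have size divisible by $rs$, giving $rs\mid[G:I]$ directly.

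Finally, this approach explains the exclusion of $s=p$ built into the statement: the factor $r$ arises from a $P$-orbit whose size is coprime to $p$, whereas the $p$-divisible characters of $S$—the unipotent and Steinberg characters—are fixed by field automorphisms, so the method produces no character of degree divisible by $rp$. Thus the hardest step is the type-by-type verification, via Deligne--Lusztig theory, that a moved semisimple class with the prescribed prime dividing its character degree can always be found; once that input is granted, the Clifford reduction of the second paragraph delivers all the required edges uniformly.
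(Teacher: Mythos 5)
Your reduction is set up correctly: the CFSG argument forcing $S$ to be of Lie type, the observation that a prime $r\in\delta$ can only arise from field automorphisms, and the Clifford-theoretic criterion (find $\theta\in\irr S$ with $s\mid\theta(1)$ such that no $G$-conjugate of $\theta$ is fixed by the Sylow $r$-subgroup, so that $r\,s$ divides $[G:I_G(\theta)]\,\theta(1)$, which divides $\chi(1)$ for any $\chi$ over $\theta$) are all sound. But the proof has a genuine gap exactly where you say the ``main obstacle'' lies: you never establish that, for each vertex $s$ of $\Delta(G)$ other than the defining characteristic, such a non-invariant character of $s$-divisible degree actually exists. The sentence ``one selects $t$ whose class is moved by $F_0$ and for which $s$ divides the relevant index'' is an assertion, not an argument; the fact that a semisimple class can be chosen so as to control \emph{both} the divisibility of the degree by an arbitrary prescribed prime $s\in\pi(S)\setminus\{p\}$ \emph{and} the non-invariance under the field automorphism is precisely the hard content of the statement, and it requires a case analysis over the types of $S$. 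The same remark applies to your one-sentence treatment of the case $s\in\delta$ (the clique on $\delta$). As it stands, the proposal produces no edge of $\Delta(G)$ at all without this missing input.

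The paper does not attempt this computation: it quotes Theorem~2.7 of \cite{MT} for the adjacency of each $r\in\delta$ to every prime in $\pi(S)$ other than the characteristic, and Lemmas~2.10 and~2.6 of \cite{MT} (a central cyclic Hall $\delta$-subgroup of $G/S$, lifted by Schur--Zassenhaus, has a \emph{regular} orbit on $\irr S$) for the clique on $\delta$. Your plan is essentially a sketch of how one might reprove those results of Moret\'o and Tiep, so to be complete it would have to reproduce their type-by-type verification. Two smaller inaccuracies: the characters of $S$ of degree divisible by $p$ are in general not only the unipotent ones (already for ${\rm PSL}_3(q)$ there are non-unipotent characters of degree divisible by $q$), so your closing heuristic for why $p$ is excluded is not correct as stated; and the claim that a $\sigma$-invariant $\theta$ ``never supplies the factor $r$'' needs the observation that $I_G(\theta)$ then contains a full Sylow $r$-subgroup of $G$, which you use but do not quite say.
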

\begin{proof}
  Assume $\delta \neq \emptyset$ and let $p \in \delta$. Since $p$ divides $|{\rm{Aut}}(S)|$ but $p$ does not divide $|S|$,
  by the classification of the finite simple groups we deduce that $S$ is a simple group of Lie type in characteristic, say, $t$.
  By Theorem~2.7 of \cite{MT}, $p$ is adjacent in $\Delta(G)$ to every vertex $q\in \pi(S)$ such that $q\neq t$.
  Moreover, $G/S$ has a normal (in fact,  central) and cyclic Hall $\delta$-subgroup $D/S$ (\cite[Lemma 2.10]{MT}); so,
by Schur-Zassenhaus theorem $G$ has a cyclic Hall $\delta$-subgroup $A$ and by Lemma 2.6 of \cite{MT} $A$ has a regular orbit on $\irr S$. Hence, by Clifford correspondence, there is a $\chi \in \irr G$ such that $|A|$ divides $\chi(1)$ and we conclude that,
in particular, the subgraph of $\Delta(G)$ induced by $\delta$ is a clique. 
\end{proof}

To avoid any ambiguity in the previous statement, we mention that no simple group of Lie type in double characteristic has an outer automorphism whose order is coprime to the order of the group itself.



\smallskip
In the course of our analysis, we will also  make use of special types of actions of groups on modules. 
Let \(H\) and \(V\) be finite groups, and assume that \(H\) acts by automorphisms on \(V\). Given a prime number  \(q\), we say that \emph{the pair \((H,V)\) satisfies the condition \(\nq\) } if  $q$ divides $|H: \cent HV|$ and, for every non-trivial \(v\in V\), there exists a Sylow \(q\)-subgroup \(Q\) of \(H\) such that \(Q\trianglelefteq \cent H v\) (see \cite{C}).

If $(H, V)$ satisfies $\nq$ then \(V\) turns out to be an elementary abelian \(r\)-group for a suitable prime \(r\), and \(V\) is in fact an \emph{irreducible} module for \(H\) over the field $\mathbb{F}_r$ with \(r\) elements  (see~Lemma~4 of~\cite{Z}). 


\begin{lemma}\label{SL2Nq}
  Let $t$, $q$, $r$  be prime numbers, let $H = {\rm SL}_2(t^a)$ (with $t^a \geq 4$)  and  let  $V$ be an $\mathbb{F}_r[H]$-module.
  Then $(H, V)$ satisfies $\nq $ if and only if 
 either $t^a = 5$ and $V$ is the natural module for $H/\cent HV \cong {\rm SL}_2(4)$ or $V$ is faithful and 
  one of the following holds.
  \begin{description}
  \item[(1)] $t = q = r$ and $V$ is the natural $\mathbb{F}_r[H]$-module (so $|V| = t^{2a}$); 
  \item[(2)] $q = r = 3$ and $(t^a, |V|) \in \{(5, 3^4), (13, 3^6)\}$. 
  \end{description}
\end{lemma}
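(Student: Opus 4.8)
The plan is to prove this biconditional characterization of the $\nq$ condition for $H = \SL{t^a}$ acting on an $\F_r[H]$-module $V$ by treating the ``if'' direction first (verifying that each listed case genuinely satisfies $\nq$) and then the harder ``only if'' direction (showing the list is exhaustive). Throughout I would exploit the known subgroup structure of $\SL{t^a}$ and its natural and exceptional modules, together with the fact, recalled in the excerpt, that any pair satisfying $\nq$ forces $V$ to be an \emph{irreducible} $\F_r[H]$-module. Since the condition $\nq$ requires $q \mid |H : \cent H V|$, the action cannot be trivial, so I may immediately reduce to the case where $V$ is a faithful irreducible module for the quotient $H/\cent H V$; the special role of $t^a = 5$ (where $\SL{2}(5) \cong \SL{2}(4) \cdot 2$ introduces a non-faithful possibility via $\SL{2}(4)$) must be isolated at the outset.

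\smallskip
For the ``if'' direction I would verify $\nq$ directly in each case. In case (1), with $t = q = r$ and $V$ the natural module, the Sylow $q$-subgroup of $H$ is a single root subgroup $U$ (order $t^a$), which fixes a $1$-dimensional subspace pointwise; for a nonzero $v \in V$ the centralizer $\cent H v$ is a Borel-type subgroup (a conjugate of the stabilizer of a point or a line), and one checks that a suitable Sylow $t$-subgroup is normal in it, using that in $\SL{t^a}$ the unipotent radical of a Borel subgroup is exactly a Sylow $t$-subgroup and is normal there. In case (2), the two exceptional modules (the $4$-dimensional $\F_3$-module for $\SL{2}(5)$ and the $6$-dimensional $\F_3$-module for $\SL{2}(13)$) require a more hands-on verification that every nonzero vector has a point-stabilizer containing a normal Sylow $3$-subgroup; these are precisely the modules appearing in Theorem~A case (e)(iii) and Theorem~B, so their vector-stabilizer structure can be extracted from the orbit data of $\SL{2}(5)$ and $\SL{2}(13)$ on the $81$ and $729$ vectors respectively. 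The residual $t^a = 5$, $\SL{2}(4)$ natural-module case is handled by the same root-subgroup argument as case (1) applied to $\SL{2}(4) \cong \alt{5}$.

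\smallskip
The ``only if'' direction is where the main work lies, and I expect it to be the principal obstacle. Suppose $(H, V)$ satisfies $\nq$ with $V$ faithful irreducible over $\F_r$. The first constraint is on $q$: since $q \mid |H : \cent H V| = |H|$ and a Sylow $q$-subgroup $Q$ must be normal in the centralizer of \emph{every} nontrivial vector simultaneously, $Q$ cannot be too large, and in fact the condition severely restricts $q$ relative to the prime $t$. I would argue that $q$ must be either the defining characteristic $t$ (leading to case (1)) or a small prime forced by a cross-characteristic coincidence (leading to case (2)). The key leverage is that $\nq$ demands, for each $v \neq 0$, a full Sylow $q$-subgroup of $H$ normalized by $\cent H v$; counting vectors against the indices of vector-stabilizers, and comparing with the possible composition factors of $V$ as dictated by the representation theory of $\SL{t^a}$ in cross characteristic, should eliminate all but the exceptional small cases. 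Concretely, I would invoke the classification of irreducible $\F_r[\SL{t^a}]$-modules of small dimension (Brauer characters / the decomposition matrices in cross characteristic), bound the dimension of $V$ using the constraint that every nonzero vector's stabilizer contains a normal Sylow $q$-subgroup, and then check the finitely many surviving candidates by direct computation. The delicate point will be ruling out all cross-characteristic modules other than the two in (2): this is essentially a finite but intricate case analysis, and I anticipate it is where most of the technical effort — and any appeal to prior literature such as the work on $\nq$ in \cite{C} and \cite{Z} — will be concentrated.
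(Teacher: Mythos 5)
Your ``if'' direction is fine in outline (it is the easy half; the paper simply asserts it can be checked directly), though two small corrections: the stabilizer in $\SL{t^a}$ of a nonzero vector of the natural module is itself a Sylow $t$-subgroup rather than a Borel subgroup, which makes case (1) immediate; and the non-faithful exception at $t^a=5$ arises because $\SL 5$ is a central extension $2\cdot\SL 4$, not an upward extension $\SL 4\cdot 2$. The genuine gap is in the ``only if'' direction. Your plan --- bound $\dim V$, invoke the classification of small-dimensional cross-characteristic modules, and ``check the finitely many surviving candidates'' --- does not close, because $t^a$ ranges over infinitely many values and the condition $\nq$ gives no dimension bound uniform in $t^a$; the candidate list is finite only after $t^a$ itself has been pinned down, and that is exactly the step you have not supplied. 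The quantitative handle you need, and never state, is the partition identity: under $\nq$ the nonzero vectors of $V$ are the \emph{disjoint} union of the sets $\cent VQ-\{1\}$ as $Q$ runs over $\syl qH$ (disjoint because $\cent Hv$ has a unique, normal, Sylow $q$-subgroup), which yields $(r^d-1)/(r^b-1)=n_q(H)$ where $|V|=r^d$ and $|\cent VQ|=r^b$. This Diophantine constraint, not the classification of modules, is what drives the argument.

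For comparison, the paper disposes of all cases with $q$ odd by quoting Proposition~13 of \cite{C} (for $t\neq 2\neq q$) and Lemma~3.3 of \cite{ACDPS} (for $t=2$), and Proposition~8 of \cite{C} forces $r=q$ when $q=2$; so the only case needing real work is $q=r=2$. There, for $t$ odd and $t\neq 3$, any nontrivial element of a Sylow $t$-subgroup with a nonzero fixed vector would have to normalize a Sylow $2$-subgroup, which the subgroup structure of $\PSL{t^a}$ forbids; hence the Sylow $t$-subgroup acts fixed-point-freely and $a=1$. Then Frobenius subgroups $TP$ with $P$ of odd prime order dividing $t-1$ must have $\cent VP\neq 1$, forcing $P$ to normalize a Sylow $2$-subgroup and hence $t-1\in\{2^k,\,2^k\cdot 3\}$; the partition identity applied to $n_2(H)$ then eliminates everything except $t=5$. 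The case $t=3$ requires showing $H$ acts $\tfrac12$-transitively and invoking the theorem of Giudici--Liebeck--Praeger--Saxl--Tiep together with the classification of affine doubly transitive groups, and the case $t=q=r=2$ is settled by the same identity plus a recognition lemma for the natural module. None of these arithmetic restrictions on $t$ are recoverable from your sketch as written, so the only-if direction remains essentially unproved.
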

\begin{proof}
  First, we consider the ``only if'' part of the statement. 
  If $t \neq 2 \neq q$, this is Proposition~13 of~\cite{C}, while for $t = 2$ and $q$ odd it follows from Lemma 3.3 of~\cite{ACDPS}.
  We can therefore assume $q = 2$, thus also $r = 2$ by part (2) of Proposition~8 of \cite{C}. 
  Next, we  suppose that $t$ is odd, and we  prove that $t^a = 5$ and that $V$ is isomorphic to the natural module for
  $H/\cent HV \cong {\rm SL}_2(4)$.
  Observe that, by the condition $\mathcal{N}_2$, the kernel $\cent HV$ of the action is the cyclic subgroup $\zent H$ of order $2$. 
Let $T$ be a Sylow $t$-subgroup of $H$. If there exists a non-trivial element $x \in T$ such that $\cent Vx \neq 1$, 
then $x$ normalizes a Sylow $2$-subgroup of $H$ by the condition $\mathcal{N}_2$: in view of the structure of the subgroups of ${\rm PSL}_2(t^a)$ (\cite[Hauptsatz II.8.27]{Hu}), this can happen only if  $t= 3$.

Let us first assume $t \neq 3$. Hence $T$ acts fixed-point freely on $V$ and, as $T$ is elementary abelian, it follows that  $a= 1$.
Let $Q$ be a Sylow $2$-subgroup of $H$.
  It is well known that $Q$ is a  generalized quaternion group (since $H$ has a unique involution) and it easily follows
  that $\norm HQ = Q$ if $t^2 \equiv 1 \pmod{16}$ and, recalling 
  that
  $H/\zent H$ has subgroups isomorphic to $A_4$ as $t \neq 2$ (\cite[Hauptsatz II.8.27]{Hu}), that $\norm HQ \cong {\rm SL}_2(3)$ if $t^2 \not\equiv 1 \pmod{16}$. 
  Now, if  $p$ is an odd divisor of $t-1$, then  $H$ has a Frobenius subgroup $TP$, with kernel $T$ and $P$ cyclic of order $p$.
  By Theorem 15.16 of~\cite{Is} $\cent VP$ is non-trivial and hence, again by the condition $\mathcal{N}_2$, $P$ normalizes
  a Sylow $2$-subgroup of $H$.
  It follows that either $t-1 = 2^k \cdot 3$, with $k \leq 2$,  or $t-1 = 2^k$, $k \geq 2$.   
  The condition $\mathcal{N}_2$ implies that the set $V^{\#}$ of the non-trivial elements of $V$ is a disjoint union of the
  sets $\cent VQ - \{1 \}$, where $Q$ varies in the set of the Sylow $2$-subgroups of $H$.
  Hence, writing $|V| = 2^d$,  $|\cent VQ|= 2^b$ and denoting by $n_2(H)$ the number of Sylow $2$-subgroups of $H$, we have 
  \begin{equation}
    \label{eq:c}
  n_2(H) = \frac{2^d-1}{2^b -1} = 1 + 2^b + 2^{2b} + \cdots + 2^{(\frac{d}{b} -1)b}.   
  \end{equation}
   If $t = 7$, then $n_2(H) = 21$ and hence $d= 6$ (and $b= 2$), which is impossible as $H$ has no irreducible module of dimension $6$
  over $\mathbb{F}_2$.
  If $t = 13$, then $n_2(H) = 91$, which has no binary expansion of the form (\ref{eq:c}).
  Hence, $t = 2^k +1$, with $k \geq 2$, and $n_2(H) = t(t+1)/2 = 1 + 2^{k-1} + 2^k + 2^{2k-1}$.
  By~(\ref{eq:c}), it follows that $k= 2$ and $t = 5$, so $d = 4$ (and $b = 1$).
  Of  the two  $H$-module of dimension $4$ over $\mathbb{F}_2$, the only one
  that satisfies $\mathcal{N}_2$ is the natural module for $H/\cent HV \cong {\rm SL}_2(4)$. 

  On the other hand, assume $t = 3$; so $a \geq 2$ and a Sylow $3$-subgroup of $H$, being non-cyclic, cannot act fixed-point freely on $V$.
  Since the normalizer of a Sylow $3$-subgroup of $H$ acts transitively on its subgroups of order $3$,   all the subgroups of order $3$ of $H$ are conjugate in $H$.  It follows that each of them has a non-trivial
  centralizer in $V$ and hence, by the condition $\mathcal{N}_2$ and the discussion above, we deduce that
  $a$ is odd and that $\cent Hv \cong \rm{SL}_2(3)$ for every non-trivial $v \in V$.
  In particular, all the $H$-orbits of non-trivial vectors of $V$ have the same size (i.e. $H$ acts $\frac{1}{2}$-transitively on $V$).
  Hence, recalling that $H$ is non-solvable and that the orbit sizes are odd by the condition $\mathcal{N}_2$,
  Theorem~6 of \cite{GLPST} yields that $H$ acts transitively on the non-trivial elements of $V$.
  So, using the classification of the doubly transitive affine permutation groups (see Appendix~1 of~\cite{Li}) and
  recalling that $t \neq q$ (as $t= 3$ and $q= 2$), we deduce that the only possiblity is $H \cong \rm{SL}_2(9)$ and
  $|V| = 2^4$; but in this case, for any  non-trivial $v \in V$,  $\cent Hv \cong S_4$, a contradiction. 
  
  So, we are left with the case $t = q = r =  2$.
In this situation, equation  (\ref{eq:c}) gives
  $2^d - 1 = (2^a +1)(2^b -1) = 2^{a+b} +2^b - 2^a - 1$ and hence $a = b$ and $d = 2a$. 
By Lemma 3.12 of~\cite{PR}, we conclude again that $V$ is the natural $\mathbb{F}_2[H]$-module. 

\medskip
Conversely, it is easily checked that each isomorphism type of the modules described in the statement (there are two conjugacy classes both of subgroups isomorphic
to ${\rm SL}_2(5)$  in ${\rm GL}_4(3)$ and of subgroups isomorphic to ${\rm SL}_2(13)$  in ${\rm GL}_6(3)$)  satisfies the condition $\nq$. 
\end{proof}

\begin{rem}\label{remPSL}
  We observe that if $H = {\rm PSL}_2(t^a)$ and $V$ is an $\mathbb{F}_r [H]$-module such that $(H, V)$ satisfies the condition  $\mathcal{N}_q$,
  then, seeing  $V$ in a natural way  as a
  module for $K = {\rm SL}_2(t^a)$ (with $\zent K$ in the kernel of the action), $(K, V)$ satisfies $\mathcal{N}_q$ too.
  In fact, $|K:\cent Kv| = |H:\cent Hv|$ for every $v \in V$ and $\cent Kv$ contains a Sylow $q$-subgroup of $K$ as a normal subgroup, for every non-trivial $v \in V$.
We hence conclude that  Lemma~\ref{SL2Nq} describes also all possible module actions of ${\rm PSL}_2(t^a)$ that satisfy the condition $\mathcal{N}_q$, for any prime $q$.  
\end{rem}

\begin{lemma}\label{Nqvari}
  Let $K$ be a quasi-simple group and let $V$ be a finite and faithful $K$-module such that, for a prime $q$,
  $(K, V)$ satisfies the condition $\mathcal{N}_q$. Then $K/\zent K \not\cong J_1, {\rm{M}}_{11}, {\rm PSL}_3(4)$.
  Moreover, if $K/\zent K \cong {\rm Sz}(2^a)$, then $q \neq 2$. 
\end{lemma}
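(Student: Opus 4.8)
\emph{Strategy.} The engine is a counting identity forced by $\nq$, in the spirit of equation~(\ref{eq:c}) in the proof of Lemma~\ref{SL2Nq}. Since $(K,V)$ satisfies $\nq$, the module $V$ is irreducible over the prime field $\F_r$ for a suitable prime $r$; put $d=\dim_{\F_r}V$ and fix $Q\in\syl q K$. I first claim that the fixed spaces $\cent V{Q}$, as $Q$ runs over $\syl q K$, partition $V\setminus\{0\}$. Indeed, for $v\neq 0$ the condition $\nq$ furnishes a Sylow $q$-subgroup with $Q\nor\cent K v$, so $v\in\cent V Q$; and if a nonzero $v$ lay in $\cent V{Q_1}\cap\cent V{Q_2}$ with $Q_1\neq Q_2$, then $Q_1$ and $Q_2$ would be two distinct Sylow $q$-subgroups of $\cent K v$, contradicting the fact that $\cent K v$ possesses a \emph{normal} (hence unique) Sylow $q$-subgroup. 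As the members of $\syl q K$ are conjugate, the subspaces $\cent V Q$ share a common dimension $e$, with $1\leq e<d$ because $V$ is faithful (so $Q\neq 1$ acts non-trivially). Counting nonzero vectors then gives
\[
n_q(K)=|\syl q K|=\frac{r^d-1}{r^e-1}=1+r^e+\cdots+r^{(m-1)e},\qquad m=d/e\geq 2.
\]

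\emph{Bulk of the cases.} With this in hand, I would record that, $V$ being faithful and irreducible, $\zent K$ is cyclic and acts by scalars, so $r\nmid|\zent K|$; given that the Schur multipliers in play are trivial for $\mathrm{J}_1$ and $\mathrm{M}_{11}$, equal to $48$ for $\mathrm{PSL}_3(4)$, and a $2$-group only for $\mathrm{Sz}(8)$, this cuts the possibilities for $K$ down to a short list and, in particular, rules out central $q$-elements in the delicate defining-characteristic situations. For each admissible prime $q$ one then reads off $n_q(K)$ from the known subgroup structure (the \textsc{Atlas}~\cite{atlas}) and tests whether it can be a ``repunit'' $(s^m-1)/(s-1)$ for a prime power $s=r^e$ and some $m\geq 2$. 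When $n_q(K)$ fails this purely arithmetic test every characteristic $r$ is excluded at once; in the finitely many remaining cases one fixes $r,d,e$ and contradicts the identity using the minimal dimensions of the faithful irreducible $\F_r[K]$-modules (read off from the modular character tables), exactly in the style in which the value $d=6$ was discarded inside the proof of Lemma~\ref{SL2Nq}.

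\emph{The Suzuki statement and the defining characteristic.} The genuinely resistant cases are those in the defining characteristic, where $\cent V Q$ may be large and the repunit test too weak; this is precisely where the separate assertion for $\mathrm{Sz}(2^a)$ with $q=2$ belongs, and I would treat it by a local argument that I expect to be the template for the remaining characteristic-$r$ difficulties. In $\mathrm{Sz}(2^a)$ every involution $z$ satisfies $\cent K z=Q$ for the (unique) Sylow $2$-subgroup $Q$ containing it; combined with $\nq$ this forces $\cent V z=\cent V Q$, and indeed $\cent V x=\cent V Q$ for \emph{every} non-trivial $x\in Q$ (any such $x$ lies in a unique Sylow $2$-subgroup, because its square, or itself, does). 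If $r$ is odd, the involutions of $\zent Q$ are commuting semisimple involutions sharing the $1$-eigenspace $\cent V Q$; simultaneous diagonalisation forces them to coincide, contradicting $|\zent Q|=2^a\geq 8$. If $r=2$, then for an involution $z\in\zent Q$ one has $\operatorname{im}(z-1)\subseteq\ker(z-1)=\cent V Q$, whence $d\leq 2e$ and, as $m\geq 2$, we get $m=2$ and $e=d/2$; thus $\operatorname{im}(z-1)=\cent V Q=\operatorname{im}(x-1)$ for every non-trivial $x\in Q$, which gives $(x-1)^2=0$, i.e.\ $x^2=1$. Hence $Q$ would be elementary abelian, contradicting the fact that a Suzuki $2$-group has exponent $4$.

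\emph{Main obstacle.} I expect the defining-characteristic analysis --- marrying the partition identity with the local structure of a Sylow $q$-subgroup (and with the treatment of the exceptional covers of $\mathrm{PSL}_3(4)$ and $\mathrm{Sz}(8)$, dispatched via the scalar action of the centre) --- to be the crux; the non-defining primes should all fall to the repunit test.
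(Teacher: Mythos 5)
Your proposal is correct in substance. For $J_1$, ${\rm M}_{11}$ and ${\rm PSL}_3(4)$ it follows the same route as the paper: both arguments reduce to the identity $n_q(K)=(r^d-1)/(r^e-1)$ (the paper quotes Proposition~2.3 of~\cite{DKP}, whereas you derive the partition of $V\setminus\{0\}$ by the fixed spaces $\cent VQ$ directly, which is fine) and then dispose of these groups by checking that $n_q(K)$ is never a repunit in a suitable base; neither you nor the paper writes out that finite check, though the paper additionally invokes \cite[Theorem~2.5]{DKP} to restrict $r$ to $\pi(K)$, which shortens it. Where you genuinely diverge is the Suzuki case: the paper forces $r=2$ via \cite[Proposition~2.3(a)]{DKP}, computes $n_2(K)=2^{2a}+1$ to get $e=2a$ and $d=4a$, kills $\zent K$ using the Schur multiplier, and then appeals to Martineau's classification \cite{M} of the absolutely irreducible $2$-modular representations of ${\rm Sz}(2^a)$ to identify $V$ with the natural module and contradict $e=2a$. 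You replace this representation-theoretic input by the TI-property of the Sylow $2$-subgroups, which yields $\cent Vx=\cent VQ$ for every non-trivial $x\in Q$, and then a purely linear-algebraic contradiction (distinct commuting involutions with identical $\pm1$-eigenspace data when $r$ is odd; $Q$ elementary abelian when $r=2$). This is more elementary and self-contained, and it does not require computing $n_2(K)$ at all, only $m\geq 2$.

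One step of your $r=2$ argument needs repair: the assertion $\operatorname{im}(x-1)=\cent VQ$ for $x\in Q$ of order $4$ presupposes $\operatorname{im}(x-1)\subseteq\ker(x-1)$, i.e.\ $(x-1)^2=0$, which is exactly what you are trying to prove (for the involution $z$ the containment is automatic, but not for $x$). The fix is immediate: if $x^2\neq 1$, then $\ker\bigl((x-1)^2\bigr)=\ker(x^2-1)=\cent V{x^2}=\cent VQ=\ker(x-1)$; since $x-1$ is nilpotent, a stable kernel at the first step forces $\ker\bigl((x-1)^{j}\bigr)=\ker(x-1)$ for all $j$, hence $V=\cent Vx$, contradicting faithfulness. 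Thus $x^2=1$ for every $x\in Q$, and your contradiction with the exponent of a Suzuki $2$-group stands.
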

\begin{proof}
  Write $|V| = r^d$, where $r$ is a prime, and for a Sylow $q$-subgroup $Q$ of $K$ let $|\cent VQ| = r^b$.
  Since $(K, V)$ satisfies the condition $\mathcal{N}_q$, Proposition 2.3(b,c) of~\cite{DKP} yields that $b$ is a proper divisor of  $d$ and, setting $c = d/b$, 
  $(r^d - 1)/(r^b -1) = 1 + r^b + r^{2b} + \cdots + r^{(c -1)b}$ coincides with the  number $n_q(K)$ of the Sylow $q$-subgroups of $K$.
  Moreover, Theorem 2.5 of~\cite{DKP} implies that  $r$ divides $|K|$.
  A check of the $r$-adic representations of $n_q(K)$, for every choice of prime divisors  $r, q$ of $|K|$, excludes the cases    $K/\zent K \cong J_1, M_{11}, {\rm PSL}_3(4)$.

  Finally, working by contradiction we assume  that  $K/\zent K \cong {\rm Sz}(2^a)$ for an odd integer $a \geq 3$  and
  that $(K, V)$ satisfies the condition $\mathcal{N}_2$. 
  Then $r=2$ by~\cite[Proposition 2.3(a)]{DKP} and, as $n_2(K) = 2^{2a}+1$, 
  we deduce that $b = 2a$ and that $d = 4a$.
  The Schur multiplier of ${\rm Sz}(2^a)$ is a $2$-group for $a = 3$,  and it is trivial for $a \geq 5$ (\cite[Theorem 1 and Theorem 2]{AG}).
  Hence, as the action of $K$ on $V$ is faithful and satisfies the condition  $\mathcal{N}_2$, we conclude that in any case $\zent K = 1$. 
  By  the structure of the absolutely  irreducible modules for the Suzuki simple groups on fields of characteristic $2$
  (\cite[Lemma 1]{M}), we deduce that $V$ is (any Galois conjugate of) the natural $\mathbb{F}_{2^a}[K]$-module seen as $\mathbb{F}_2[K]$-module, and hence $|\cent VS|= 2^a$ and $b= a$, a contradiction. 
\end{proof}

Next, an improvement of Proposition~3.2 of \cite{ACDPS}.

\begin{proposition}\label{P3.12+}
  Let $H$ be a non-solvable group and let $V$ be a finite and faithful $H$-module.
  If $(H, V)$ satisfies the condition $\nq$ for some prime $q$, then the solvable radical of $H$ is cyclic. 
\end{proposition}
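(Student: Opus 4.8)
The plan is to analyze the solvable radical $R$ of $H$ through its action on $V$, exploiting the strong restriction that the $\nq$ condition imposes on centralizers. First I would recall that, as noted after the definition of $\nq$, the module $V$ is necessarily irreducible over $\F_r$ for a suitable prime $r$, and that $q$ divides $|H|$ (since $H$ acts faithfully and $q \mid |H:\cent HV|$). The key structural input is the layer-radical decomposition: since $H$ is non-solvable, I would look at a minimal normal non-solvable section, or better, reduce to the situation where $H$ acts with a quasi-simple component. The governing principle is that the $\nq$ hypothesis forces, for every non-trivial $v \in V$, that a Sylow $q$-subgroup $Q$ of $H$ sits normally inside $\cent H v$; this is an extremely rigid requirement that will squeeze $R$.

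The main step is to show that $R$ acts on $V$ in a way that leaves little room. I would first argue that the Fitting subgroup $\fit H$, being nilpotent and normal, acts on $V$; by coprimality considerations and the irreducibility of $V$, I expect to pin down that the relevant part of $R$ embeds into the centralizer of a suitably chosen element or into a field-like structure via a Schur's-lemma argument (as in Lemma~\ref{singer} and Lemma~\ref{clg}). The cyclicity of $R$ should ultimately come from showing that $R$ embeds into the multiplicative group of a finite field, hence is cyclic. Concretely, I would try to locate a non-trivial vector $v$ whose centralizer in $H$ is as large as possible, and use the normality of a Sylow $q$-subgroup in $\cent H v$ together with the non-solvable composition factor to force $R \cap \cent H v$ to be cyclic, then propagate this across all of $R$ by transitivity/conjugacy of the relevant orbits.

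Since this is billed as an improvement of Proposition~3.2 of~\cite{ACDPS}, I would structure the argument to first invoke or re-derive the weaker conclusion of that proposition (likely that $R$ is abelian, or metacyclic, or has some bounded structure), and then upgrade it to full cyclicity. The upgrade would use the classification-flavored results already available: Lemma~\ref{SL2Nq} and Remark~\ref{remPSL} describe exactly which $\SL 2(t^a)$- and $\PSL 2(t^a)$-actions satisfy $\nq$, and Lemma~\ref{Nqvari} rules out $J_1$, $\mathrm{M}_{11}$, $\PSL 3(4)$ and constrains $\mathrm{Sz}(2^a)$ for quasi-simple $K$. So I would reduce to the case where a quasi-simple component of $H$ is one of these listed groups acting on $V$, and then directly analyze how $R$ can act compatibly with such a component, using that $[\lay H, R] = 1$ (the layer centralizes the solvable radical) to confine $R$ inside a centralizer that Schur's lemma forces to be cyclic.

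The hard part will be handling the interface between the solvable radical and the non-solvable part uniformly, without a case-by-case explosion over the possible components. The delicate issue is that $\nq$ is a condition on the whole group $H$, not just on its non-solvable part, so I must ensure that the Sylow $q$-subgroup normalized in each point-centralizer is genuinely a Sylow subgroup of $H$ (not merely of a subgroup), and that the normality of $Q$ in $\cent H v$ transmits information back to $R$. I expect the cleanest route is to fix a primitive prime divisor $p$ (as in the discussion preceding Lemma~\ref{singer}) of the relevant $t^a \mp 1$, take $X \le H$ of order $p$, and show $\cent H X$ is cyclic via Lemma~\ref{singer}/\ref{clg}, then argue $R \le \cent H X$ because $R$ must centralize the large cyclic torus containing $X$; the obstruction is justifying that last containment when no convenient primitive prime divisor exists (the exceptional small cases $t^{2a} = 2^6$ and $t^a = 5, 13$ appearing in Lemma~\ref{SL2Nq}), which I would dispatch by a direct inspection using the explicit module structure listed there.
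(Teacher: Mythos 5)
There is a genuine gap, and it sits exactly where you flag the ``delicate issue'' and then wave it through. Your plan rests on two unproved assertions. First, you reduce to the situation where $H$ has a quasi-simple component acting on $V$ in one of the ways classified by Lemma~\ref{SL2Nq} and Lemma~\ref{Nqvari}; but a non-solvable group need not possess any component at all: one can have $\lay H=1$ and $\fitg H=\fit H$, for instance with an extraspecial $2$-group as generalized Fitting subgroup and a non-solvable quotient inside the corresponding symplectic group, and ruling out precisely this configuration is the heart of the proposition --- you cannot assume it away (indeed, the existence of a component only follows \emph{a posteriori}, once $R$ is known to be cyclic). Second, the containment $R\le\cent HX$ (equivalently, that $R$ centralizes the relevant torus) is asserted ``because $R$ must centralize the large cyclic torus containing $X$'' with no argument; this is the actual content of the result, not a routine verification. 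Note also that the primitive prime divisor you should be using is one of $|V|-1=q^{a}-1$, so that Lemma~\ref{singer} applies inside ${\rm GL}_{a}(q)$, rather than one of the torus orders $t^a\mp1$ of a putative ${\rm SL}_2(t^a)$-component.

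For comparison, the paper first invokes Proposition~3.2 of \cite{ACDPS} to reduce to the case $r=q$, disposes of $|V|=q^2$ and $|V|=2^6$ by direct linear-group arguments, and then, for a primitive prime divisor $t$ of $q^a-1$ with Sylow $t$-subgroup $T$, shows that $t$ does not divide $|R|$ and that $E=[R,T]$ is a normal $2$-subgroup all of whose characteristic abelian subgroups are cyclic; if $E$ were non-cyclic it would be extraspecial of order $2^{2n+1}$, and a combination of Hall--Higman-type bounds ($2^n$ divides $a$, $|T|$ divides $2^n\pm1$, forcing $t=2^n+1=a+1$), a Brauer-character computation showing $|\cent Vy|=|V|^{1/2}$ for a non-central involution $y\in E$, Thompson's $A\times B$ lemma, and an induction on $|V|$ applied to the pair $(\cent Hy,\cent Vy)$ produce the contradiction; only then does $[R,T]=1$ follow and $R\le\cent HT$ become cyclic via the Singer-cycle argument. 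None of this machinery appears in your outline, and without it the step ``hence $R$ is cyclic'' does not follow.
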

\begin{proof}
  By Lemma~4 of~\cite{Z} we know that $V$ is an irreducible module; write $|V| = r^a$, where $r$ is a prime and $a \geq 2$. 
  By Proposition~3.2 of \cite{ACDPS}, we can assume that $r = q$. Let $R$ be the solvable radical of $H$ and let $Q$ be a
  Sylow $q$-subgroup of $H$.

  We consider first the case $a = 2$. Then we can see $H$ as a subgroup of ${\rm GL}_2(q)$ and $H_0 = H \cap {\rm SL}_2(q)$ is
  non-solvable; moreover, $(H_0, V)$ satisfies $\nq$. If $H_0$ is a proper (non-solvable) subgroup of $ {\rm SL}_2(q)$, then the classification of the subgroups of ${\rm SL}_2(q)$ yields  $H_0 \cong A_5 \cong {\rm SL}_2(4)$:  Lemma~\ref{SL2Nq} yields now that   $|V| = 2^4$, not our case because we are assuming $|V|=r^2$.
  Hence, ${\rm SL}_2(q)=H_0$ and therefore $R \leq \cent{{\rm GL}_2(q)}{H_0} \leq \zent{{\rm GL}_2(q)}$ is cyclic.

  Next, we assume that $|V| = 2^6$. So, by Lemma 3.1 of ~\cite{ACDPS} $H$ has a normal subgroup isomorphic to ${\rm SL}_2(8)$.
  Since $\rm{GL}_6(2)$ has a unique conjugacy class of maximal subgroups having a section isomorphic to ${\rm SL}_2(8)$,
  and they are isomorphic to ${\rm GL}_2(8) : C_3 = (C_7 \times {\rm SL}_2(8)):C_3$, we conclude that  also in this case $R$
  is cyclic.

  We can hence assume that $|V| = q^a \neq q^2, 2^6$; so,  there exists a primitive prime divisor $t$ of $q^a -1$  (recall also that 
 $a$ divides $t-1$).
  Let $T$ be a Sylow $t$-subgroup of $H$. Since $(H, V)$ satisfies the condition  $\nq$, then $(|V|-1)/(|\cent VQ|-1) = |\syl qH|$ 
  divides $|H|$, thus $T \neq 1$.
  We also observe that $t$ does not divide $|R|$. In fact, if $T_0 = T \cap R \neq 1$, then $\zent{T_0}$,  a normal
  abelian subgroup of $\norm H{T_0}$,  acts irreducibly on $V$ and hence  by \cite[Theorem~2.1]{MW} $\norm H{T_0}$ is solvable, which is a contradiction as $H = R \norm H{T_0}$ by the Frattini argument.

  For every prime divisor $p$ of the order of $R$, there exists a $T$-invariant Sylow $p$-subgroup $P$ of $R$
  and by  Lemma~6 of \cite{C} we get $[P, T] = 1$ if $p \neq 2$. It follows that $R = \cent RT D$, where $D$ is a $T$-invariant Sylow $2$-subgroup of $R$.
  Write $E = [R, T]$; then $E = [\cent RT D, T] = [D,T] \leq D$ is a normal $2$-subgroup of $R$. So, $E \leq \fit R \leq \fit H$.
  We observe that $\cent HT \leq \cent{{\rm GL}_a(q)}T$ is cyclic by~Lemma~\ref{singer}.

  It is enough to prove that $E$ is cyclic: in this case, in fact, $T$ centralizes $E$ (as ${\rm Aut}(E)$ is
  a $2$-group and $t \neq 2$) and then by coprimality $[R, T] = [R, T, T] = [E, T] =1$, so $R \leq \cent HT$ is cyclic.

   Working by contradiction, we assume that $E$ is non-cyclic. 
   If $A$ is a characteristic abelian subgroup of $E$, then $V$ is an irreducible $AT$-module (because $t$ is a primitive prime divisor of \(q^a-1\)); moreover, the restriction of $V$ to $A$ is homogeneous, since otherwise it would have at least $t$ homogeneous components, yielding the contradiction $a\geq t>a$; thus, $A$ is cyclic. As a consequence, $T$ centralizes every characteristic abelian subgroup of $E$. Since $[E,T]=E$,  \cite[24.7]{A} yields that $E$ is an extraspecial $2$-group and $\zent E = \cent ET$.
   Write $|E| = 2^{2n +1}$.
   By applying~\cite[Corollary 2.6]{MW} to an irreducible constituent of the homogeneous module $V_E$, we see that $2^n$ divides the dimension $a$ of $V$ over $\mathbb{F}_q$ and then $2^n +1 \leq a +1 \leq t$.
   On the other hand, $|T|$ divides $2^n \pm 1$ by ~\cite[Satz V.17.13]{Hu} and we deduce that $t = 2^n+1 = a+1$.
   Also, by~\cite[Corollary 2.6]{MW} $V$ is an absolutely irreducible $E$-module.
   As $a > 2$, then $n \geq 2$ and there exists a non-central involution $y$ in $E$.
   Let $Y  = \cent Hy$ and $U = \cent Vy$. We remark  that $q \neq 2$, as $E$ acts 
faithfully and irreducibly on $V$. 
   Hence, it follows that  $|U| = |V|^{1/2}$. In fact, if 
$\phi$ is the Brauer character corresponding to $V$, then $\phi \in \irr E$,  $\phi(1) = 2^n$ and $\phi(y) = 0$. 
So,  $\dim_{\mathbb{F}_q}(U) = [\phi_{\langle y \rangle}, 1_{\langle y \rangle}] = \phi(1)/2 = a/2$. 

Now, if $\cent HU$ contains a Sylow $q$-subgroup of $H$, that (up to conjugation) we can assume to be $Q$,  then
$U = \cent VQ$. In fact, $U \leq  \cent VQ$ and $|\cent VQ| \leq |V|^{1/2}$, because  the condition $\nq$ implies that the centralizers
in $V$ of two distinct Sylow $q$-subgroups of $H$ have trivial intersection. 
So, $\cent V{\langle y \rangle} = \cent VQ$ and, in particular,  $y$ normalizes $Q$ because of the condition $\nq$.
As $y  \in \oh 2R \nor H$, it follows that $[y, Q] \leq \oh 2R \cap Q = 1$ and hence  
by  Thompson's $A\times B$ Lemma (\cite[24.2]{A}) we deduce that $y$ acts trivially
on $V$, a contradiction. 

Hence, $q$ divides $|H: \cent HU|$ and, since $q \neq 2$ and $\langle y \rangle \leq \oh 2R \nor H$,  \cite[Lemma~2.4]{DKP} yields that $(Y,U)$ satisfies the condition $\nq$, as well.
Set $\o Y = Y/\cent YU$. If $\o Y$ is solvable then, recalling that \(a\neq 2\), Proposition~9 in \cite{C} ensures that \(G\) is a group of semilinear maps on \(V\); now, $\fit{\o Y}$ is cyclic by Lemma 6.4 and Corollary 6.6 of~\cite{MW}.
If $\o Y$ is non-solvable, then  we anyway get  that $\fit{\o Y}$ is cyclic, working by induction on $|V|$.  
Now, $\cent E{\langle y \rangle} = \langle y \rangle \times E_0$, 
where $E_0$ is an extraspecial group of order $2^{2(n-1)+1}$. 
We observe that $E_0 \leq E \cap Y \leq \fit H \cap Y \leq \fit Y$ and that  $\cent {E_0}U  = 1$, as $\cent {E_0}U$ is a normal subgroup of $E_0$ and it intersects trivially $ \zent{E_0} = \zent E =  \langle z \rangle$, because $z$ acts as the inversion on $V$.
So $E_0 \cong \o{E_0} \leq \fit{\o Y}$, which is cyclic, the final contradiction. 
\end{proof}

We  now derive a first set of consequences of the assumption that $\Delta(G)$ has connectivity degree $1$, in the case that the group $G$ has a non-abelian minimal normal subgroup (case that, of course, includes all almost-simple groups). 

\begin{proposition}\label{nonabeliansocle}
  Let \(G\) be a group which has a non-abelian minimal normal subgroup $M$. If  \(\Delta(G)\) is connected and
 has a   cut-vertex \(p\), then the following conditions are satisfied.
\begin{enumeratei}
\item $\V G=\pi(G/{\cent G M})\cup\V {\cent G M}$.
\item $\V{\cent G M}\sbs \{p\}$, and \(R(G)=\cent G M\). Further, if $\V{\cent G M}= \{p\}$, then $p$ is a complete vertex of \(\Delta(G)\).
\item $M$ is a simple group.
\item $\Delta(M)$ is either disconnected or it is connected with cut-vertex $p$.
\item \(M\) is isomorphic to one of the groups of the following list:
  $$\mathcal{L} = \{ {\rm{PSL}}_2(t^a)\;({\rm{ with }}\;t^a \geq 4),\; {\rm{Sz}}(2^a)\; ({\rm{ with }}\;a \geq 3,\; a\; {\rm{ odd}}),\; {\rm{PSL}}_3(4),\;
  {\rm{M}}_{11},\; {\rm{J}}_1\}.$$ 
\end{enumeratei}
\end{proposition}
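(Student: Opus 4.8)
The plan is to exploit throughout that $M\cong S^k$ for a non-abelian simple group $S$ and some $k\geq 1$, and to set $C=\cent G M$. Since $\zent M=1$ we have $M\cap C=1$, so that $MC\cong M\times C\nor G$ and $\Delta(MC)$ is a subgraph of $\Delta(G)$; moreover $\V M=\pi(S)$ with $|\pi(S)|\geq 3$ (Burnside), and $\Delta(M)$ has vertex set $\pi(S)$, being complete when $k\geq 2$ and equal to $\Delta(S)$ when $k=1$. The engine of the whole argument is the ``join property'': because the degrees of $M\times C$ are the products of a degree of $M$ and a degree of $C$, every prime $u\in\pi(S)$ is adjacent in $\Delta(MC)$, hence in $\Delta(G)$, to every prime $w\in\V C$ with $w\neq u$. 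I will also use repeatedly that, since $p$ is a cut-vertex, $|\V G|\geq 3$ and $\Delta(G)$ has at most one complete vertex (two complete vertices would make it $2$-connected).

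For part (a), I would first prove $\pi(G/C)\cup\V C\subseteq\V G$ without appealing to simplicity: the primes of $\V C$ and of $\pi(S)=\V M$ are vertices because $\Delta(MC)\subseteq\Delta(G)$, and if some $d\in\pi(G/C)$ were not a vertex then, by Proposition~\ref{Ito}, $G$ would have a normal abelian Sylow $d$-subgroup $D$; minimality of $M$ forces $D\cap M=1$, whence $[D,M]=1$ and $D\leq C$, contradicting $d\mid|G:C|$. The reverse inclusion is again Proposition~\ref{Ito}: a vertex $v\notin\pi(G/C)$ has its Sylow subgroup inside $C$, and if moreover $v\notin\V C$ that Sylow subgroup is normal abelian in $C$, hence characteristic, hence normal abelian in $G$, so $v\notin\V G$.

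The core is parts (b) and (c), both obtained by contradiction from the cut-vertex hypothesis. Writing $\delta=\pi(G/C)-\pi(S)$, the join property and $|\pi(S)|\geq 3$ show that, as soon as either $k\geq 2$ (so $\pi(S)-\{p\}$ is a clique of size $\geq 2$) or $\V C\not\subseteq\{p\}$, the whole set $(\pi(S)\cup\V C)-\{p\}$ lies in a single connected component $X$ of $\Delta(G)-p$. To reach a contradiction I must show that the remaining vertices, which all lie in $\delta$, also attach to $X$; this is the step where the almost-simple structure is indispensable, via Theorem~\ref{MoretoTiep}, which forces each prime of $\delta$ to be adjacent to every prime of $\pi(S)$ except the characteristic $t$ (and $|\pi(S)-\{t,p\}|\geq 1$). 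For $k=1$ this closes the argument, yielding that $\Delta(G)-p$ is connected, a contradiction, and hence $\V C\subseteq\{p\}$. The case $k\geq 2$ is the main obstacle, since there $G/C$ is not almost simple and Theorem~\ref{MoretoTiep} does not apply directly: one must instead analyse characters induced from $M=S^k$ to check that the primes in $\delta$ (those coming from $\aut S$ and from the permutation action on the $k$ factors) still adjoin the clique $\pi(S)$, which produces at least two complete vertices and the desired $2$-connectivity. Once $\V C\subseteq\{p\}$ is established, all degrees in $\cd C$ are powers of $p$, so by Proposition~\ref{Ito} applied to each prime different from $p$ the group $C$ has a normal abelian $p$-complement and is solvable; combined with $R(G)\leq\cent G M=C$ (from $M\cap R(G)=1$ and $[M,R(G)]=1$) this gives $R(G)=C$, and when $\V C=\{p\}$ the join property together with Theorem~\ref{MoretoTiep} shows $p$ is complete.

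Finally, parts (d) and (e) follow once $M$ is simple and $\V C\subseteq\{p\}$. By part~(a), $\V G-\{p\}=(\pi(S)\cup\delta)-\{p\}$, and Theorem~\ref{MoretoTiep} places all of $\delta$ together with $\pi(S)-\{t,p\}$ in one component of $\Delta(G)-p$; since $\Delta(G)-p$ is disconnected, the only possibility is that the characteristic $t$ is separated, so $t$ is isolated from $\pi(S)-\{t,p\}$ in $\Delta(S)-p$ as well. Hence $\Delta(S)-p$ is disconnected, i.e. $\Delta(S)=\Delta(M)$ is disconnected or has cut-vertex $p$, which is (d). Then (e) is immediate from Theorem~\ref{CDsimple}(b): a non-abelian simple group whose degree graph is not $2$-connected must be isomorphic to one of ${\rm M}_{11}$, ${\rm J}_1$, $\PSL{t^a}$, ${\rm PSL}_3(4)$ or ${\rm Sz}(2^a)$, that is, it lies in $\mathcal{L}$.
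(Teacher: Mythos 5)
Your overall architecture (the join $\Delta(M\times C)=\Delta(M)\ast\Delta(C)$ for $C=\cent GM$, the reduction to attaching the primes $\delta=\pi(G/C)-\pi(M)$ via Theorem~\ref{MoretoTiep}, and the deduction of (d)--(e) from (a)--(c) and Theorem~\ref{CDsimple}) is sound, and parts (a), (d), (e) together with the case $M$ simple of (b) are essentially correct. But there is a genuine gap exactly where you flag ``the main obstacle'': the case $M\cong S^k$ with $k\geq 2$, which is assertion (c) itself and also the corresponding case of (b). There you write that ``one must instead analyse characters induced from $M=S^k$ to check that the primes in $\delta$ still adjoin the clique $\pi(S)$'' --- but this analysis is never carried out, and it is the whole content of the step: your join property only yields adjacencies between $\V M=\pi(S)$ and $\V C$, and Theorem~\ref{MoretoTiep} is unavailable because $G/C$ is not almost simple (indeed $\delta$ may contain primes coming from the transitive permutation action on the $k$ factors that do not even divide $|\aut S|$). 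The paper closes this with Proposition~2.10 of~\cite{DKP}: part~(b) of that result produces, for \emph{every} prime $q\in\pi(G/C)$ (not just $q\in\pi(S)$), a character $\theta\in\irr M$ such that $q$ divides $\chi(1)$ for all $\chi\in\irr{G\mid\theta}$, whence every prime of $\pi(G/C)$ is adjacent in $\Delta(G)$ to every prime of $\V C$; and part~(c) states that when $M$ is not simple the set $\pi(G/C)$ is a clique of $\Delta(G)$, which immediately contradicts the disconnectedness of $\Delta(G)-p$. Without this (or an equivalent argument) your proof of (c), and of (b) in the non-simple case, does not go through.

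The same weakness resurfaces in your proof that $p$ is a complete vertex when $\V C=\{p\}$: you invoke the join property for the primes of $\pi(S)$ and Theorem~\ref{MoretoTiep} for those of $\delta$, but the latter only concerns edges of $\Delta(G/C)$ and says nothing about adjacency of a $\delta$-prime to $p$ when $p\notin\pi(G/C)$, nor when $p$ equals the defining characteristic $t$ of $S$ (both of which can occur, e.g.\ when $C$ is a non-abelian $t$-group). The uniform adjacency statement $\pi(G/C)\times\V C\subseteq\E G$ from \cite[Proposition~2.10(b)]{DKP} is what makes this conclusion, and indeed the whole of part (b), work in one stroke: if some $r\in\V C-\{p\}$ existed, any two vertices of $\V G-\{p\}$ would be joined through $r$ or through any $q\in\pi(G/C)-\{p\}$, so $\Delta(G)-p$ would be connected regardless of whether $M$ is simple. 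I recommend replacing the join property by that stronger lemma; the rest of your argument then survives essentially unchanged.
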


\begin{proof} 
  Setting \(C=\cent G M\), we first prove that  \(\V G=\pi(G/C)\cup\V C\).
  Certainly $\pi(G/C)\cup\V C \sbs \V G$. If \(q\in \V G - \pi(G/C)\), then \(C\) contains a Sylow \(q\)-subgroup \(Q\) of \(G\). Now, if \(q\) does not belong to  \(\V C\), then \(Q\) is normal in \(C\) (thus, in \(G\)) and abelian, which yields the contradiction \(q\not\in\V G\). This proves (a).

Next, we show that for every choice of \(q \in \pi(G/C)\) and \(r \in \V C\), the vertices \(q\) and \(r\) are adjacent in \(\Delta(G)\). Observe in fact that, in view of \cite[Proposition~2.10(b)]{DKP}, for every \(q \in \pi(G/C)\) there exists \(\theta \in \irr M\) such that \(q\) divides the degree of every \(\chi\in\irr{G|\theta}\). Thus, if \(\phi\in\irr C\) is such that \(r\) divides \(\phi(1)\), then any irreducible character of \(G\) lying over \(\theta\times\phi\in\irr{MC}\) has a degree divisible by \(qr\), as claimed. 

Recalling that $\pi(G/C) = \V{G/C}$ contains at least three elements (as $G/C$ has a non-abelian composition factor),
we claim that the previous paragraph forces $\V{C} \subseteq \{p\}$.  To see this, assume that   \(\V C-\{p\}\) contains a prime  \(r\); 
let  $ q \in  \pi(G/C)-\{p\}$, and choose  two distinct primes \(s\) and \(u\) in \(\V G-\{p\}\). If one of them lies in \(\pi(G/C)\) and the other in \(\V C\), then we know that \(s\), \(u\) are adjacent in \(\Delta(G)\);  on the other hand, if both lie in \(\pi(G/C)\) or \(\V C\), then either \(s-r-u\) or \(s-q-u\) is a path in \(\Delta(G)\). This contradicts the fact that
$\Delta(G) - p$ is disconnected, so  $\V{C} \subseteq \{p\}$.
We also observe that, if $\V{C} = \{p\}$, then by the previous paragraph $p$ is a complete vertex of $\Delta(G)$. 
Finally, since  $\V{C} \subseteq \{p\}$, Proposition~\ref{Ito} yields that \(C\) has an abelian normal \(p\)-complement, and hence $C \subseteq R(G)$. As the other inclusion is clear,   we get \(C=R(G) \) and (b) is proved.

Let us now show that $M$ is simple. Assuming the contrary, Proposition 2.10(c) of \cite{DKP} yields that \(\pi(G/C)\) is a clique of \(\Delta(G)\), against the fact that \(\Delta(G)-p\) is not connected.  
This proves (c). 

In order to prove (d) and (e), 
we set $\o G = G/C$. Recalling that $\Delta(\o G)$ is a subgraph of $\Delta(G)$ and that $\V G = \pi(\o G) \cup \{ p\}$,
it is clear that if \(p\) does not divide \(|\o G|\), then \(\Delta(\o G)\) is disconnected.
Since $\o G$ is an almost-simple group with socle isomorphic to $M$, by Theorem~\ref{LewisWhite} it follows that
\(M\cong {\rm{PSL}}_2(t^a)\) for \(t^a\geq 4\) and hence $\Delta(M)$ is disconnected, giving us both (d) and (e).
So,  we may assume \(p\in\pi(\o G)\) and hence $\V G = \pi(\o G)$.  
If $\pi(M) = \pi(\o G)$, then $\Delta(M)$ is a subgraph of $\Delta(G)$ with the same vertex set, so $\Delta(M)$ is either
disconnected or it is connected with cut-vertex $p$, hence (d) and (e) follow by part (b) of  Theorem~\ref{CDsimple}. 
If \(\pi(\o G)-\pi(M)\) is non-empty, then Theorem~\ref{MoretoTiep} ensures that
\(M\) is a simple group of Lie type (in characteristic $t$, say), and that every prime in  \(\pi(\o G)-\pi(M)\) is adjacent
in $\Delta(G)$ to every other vertex different from $t$. In particular, $\Delta(G) - t$ is a connected graph and
hence $p \neq t$.
Now, if $\Delta(M)$ is disconnected, then \(M\cong {\rm{PSL}}_2(t^a)\) for \(t^a\geq 4\) by part (c) of  Theorem~\ref{CDsimple}.
If $\Delta(M)$ is connected, then necessarily $p \in \pi(M)$ and $p$ is the only vertex adjacent to $t$ in $\Delta(G)$.
It follows that $\Delta(M)$ is connected with cut-vertex $p$, and hence by  part (b) of  Theorem~\ref{CDsimple} 
$M$ is isomorphic to one of the simple groups in the list $\mathcal{L}$. 
The proof of (d) and (e) is now complete.
\end{proof}

Next, we state a result that is going to be of fundamental importance for the rest of our work.

\begin{theorem}
\label{0.2}
Let $G$ be a non-solvable group such that $\Delta(G)$ is connected and it has a  cut-vertex $p$. Then, setting $R=R(G)$, we have that $G/R$ is an almost-simple group such that $\V G=\pi(G/R)\cup\{p\}$. Furthermore, the socle $X/R$ of $G/R$ is such that $\Delta(X/R)$ is either disconnected or connected with cut-vertex $p$, and $X/R$ is isomorphic to one of the simple groups in the list $\mathcal{L}$
of Proposition~$\ref{nonabeliansocle}$.
\end{theorem}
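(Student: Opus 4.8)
The plan is to pass to the quotient $\bar G=G/R$ and to read off the structure of $\bar G$ from the two classification results already at our disposal, namely Theorem~\ref{LewisWhite} and Proposition~\ref{nonabeliansocle}. Since $R=R(G)$ is the solvable radical, $\bar G$ is non-solvable with $R(\bar G)=1$; in particular $\bar G$ has a non-abelian minimal normal subgroup, and by Proposition~\ref{Ito} no nontrivial Sylow subgroup of $\bar G$ can be normal and abelian, so $\V{\bar G}=\pi(\bar G)$. As $\bar G$ is a quotient of $G$, the graph $\Delta(\bar G)$ is a subgraph of $\Delta(G)$ with vertex set $\pi(\bar G)\sbs\V G$, whence $\pi(\bar G)\cup\{p\}\sbs\V G$ (recall $p\in\V G$ as $p$ is a cut-vertex). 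First I would isolate the opposite inclusion $\V G\sbs\pi(\bar G)\cup\{p\}$ as the key lemma, proving it separately (see below), and then derive everything else from it.

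Granting the vertex-set equality $\V G=\pi(\bar G)\cup\{p\}$, the remaining assertions follow by a clean subgraph comparison. If $p\notin\pi(\bar G)$, then $\Delta(\bar G)$ and $\Delta(G)-p$ share the vertex set $\pi(\bar G)=\V G-\{p\}$, and since $\Delta(\bar G)\sbs\Delta(G)-p$ while $\Delta(G)-p$ is disconnected, $\Delta(\bar G)$ is disconnected too. If instead $p\in\pi(\bar G)$, then either $\Delta(\bar G)$ is disconnected, or it is connected and, as $\Delta(\bar G)-p\sbs\Delta(G)-p$ have common vertex set $\pi(\bar G)-\{p\}=\V G-\{p\}$, the disconnectedness of $\Delta(G)-p$ forces $\Delta(\bar G)-p$ to be disconnected as well, so $p$ is a cut-vertex of $\Delta(\bar G)$. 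In the disconnected case I would apply Theorem~\ref{LewisWhite} to $\bar G$: since $\cent{\bar G}{K}$ equals the (trivial) solvable radical of $\bar G$ by Remark~\ref{verticesdisc}, the group $\bar G$ is almost-simple with socle $X/R=K\cong\PSL{t^a}$, a member of $\mathcal{L}$ whose degree graph is disconnected. In the connected-with-cut-vertex case I would apply Proposition~\ref{nonabeliansocle} to $\bar G$: by part~(b) its non-abelian minimal normal subgroup $M$ satisfies $\cent{\bar G}M=R(\bar G)=1$, so $M$ is the whole socle, $\bar G$ is almost-simple with simple socle $X/R=M\in\mathcal{L}$, and by part~(d) the graph $\Delta(X/R)$ is disconnected or connected with cut-vertex $p$. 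Either way $G/R$ is almost-simple with socle in $\mathcal{L}$ and with the required property of $\Delta(X/R)$.

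The hard part is the reverse inclusion $\V G\sbs\pi(\bar G)\cup\{p\}$, i.e. ruling out a vertex $q\in\V G$ with $q\nmid|G/R|$ and $q\neq p$. For such $q$ a Sylow $q$-subgroup lies in $R$; were it normal and abelian it would be normal in $G$, forcing $q\notin\V G$, so Proposition~\ref{Ito} applied to $R$ gives $q\in\V R$. I would then choose $\mu\in\irr R$ with $q\mid\mu(1)$ and, via Clifford theory together with the degree estimates of~\cite{DKP} (in the spirit of the adjacency argument in the proof of Proposition~\ref{nonabeliansocle}), produce irreducible characters of $G$ lying over $\mu$ and over suitable characters of the non-abelian socle $X/R$, forcing $q$ to be adjacent in $\Delta(G)$ to primes of $\pi(X/R)$ belonging to two distinct connected components of $\Delta(G)-p$. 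Since $q\neq p$, such a vertex would join two components of $\Delta(G)-p$, contradicting that $p$ is a cut-vertex. This control over the primes contributed by the solvable radical — showing that none of them, apart from the cut-vertex itself, can appear as a vertex — is exactly the step that goes beyond Proposition~\ref{nonabeliansocle}, and it is where I expect the genuine difficulty to lie.
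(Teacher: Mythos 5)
Your reduction of the structural conclusions to the single vertex-set containment $\V G\sbs\pi(G/R)\cup\{p\}$ is correct and cleanly argued: granting that equality, the spanning-subgraph comparisons of $\Delta(G/R)$ (or $\Delta(G/R)-p$) with $\Delta(G)-p$, followed by Theorem~\ref{LewisWhite} in the disconnected case and Proposition~\ref{nonabeliansocle} applied to $G/R$ (which has trivial solvable radical, hence a non-abelian minimal normal subgroup with trivial centralizer) in the connected case, do yield that $G/R$ is almost-simple with socle in $\mathcal{L}$ and that $\Delta(X/R)$ has the stated property. The problem is that the ``key lemma'' you have isolated \emph{is} the theorem, and the mechanism you propose for it does not work. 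Your plan is to take $\mu\in\irr R$ with $q\mid\mu(1)$ and, ``via Clifford theory together with the degree estimates of~\cite{DKP}'', force $q$ to be adjacent to primes of $\pi(X/R)$ in two distinct components of $\Delta(G)-p$. That argument is available precisely when $G$ has a non-abelian minimal normal subgroup $M$: then $M\cent GM=M\times\cent GM$ is normal in $G$, one forms $\theta\times\phi$ with $\theta\in\irr M$ chosen via \cite[Proposition~2.10(b)]{DKP}, and reads off adjacencies — this is exactly the proof of Proposition~\ref{nonabeliansocle}(b). In the remaining case, which is the one Theorem~\ref{0.2} must actually handle, $X/R$ is only a \emph{section} of $G$ (e.g.\ $K$ may be quasi-simple, or an extension of a module $L$ by ${\rm SL}_2(t^a)$), there is no normal subgroup of $G$ of the form $M\times C$ with $M$ non-abelian simple, and \cite[Proposition~2.10(b)]{DKP} gives you nothing to multiply $\mu$ against. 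No soft adjacency argument is known here, and the extra prime $q$ is in fact \emph{not} always eliminated by exhibiting edges from $q$ into two components.

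What the paper does instead is a minimal-counterexample induction whose content your sketch omits entirely. One first shows $\fitg G=\fit G$ and $\frat G=1$ for a minimal counterexample, then picks an \emph{abelian} minimal normal subgroup $M$ with $G/\cent GM$ non-solvable, complements it by $H$, and shows that for every prime $s$ in the component of $\Delta(G)-p$ not containing $q$, the pair $(\overline H,\widehat M)$ satisfies the condition $\mathcal{N}_s$. Proposition~\ref{P3.12+} then forces the solvable radical of $\overline H$ to be cyclic, Lemmas~\ref{SL2Nq} and~\ref{Nqvari} pin down the possible quasi-simple subnormal subgroup $\overline K$ and module $\widehat M$, and the stray Sylow $q$-subgroup $\overline Q$ is killed because it would have to lie in $\cent{}{\overline K}$ inside ${\rm GL}_{2a}(t)$, ${\rm GL}_4(3)$ or ${\rm GL}_6(3)$, whose centralizers (Lemma~\ref{clg} and explicit computation) are too small. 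Note also that the induction proves the vertex-set statement and the structural statement \emph{simultaneously} (the structure of $G/M$, obtained by minimality, is used to constrain $\overline K$); your plan to establish the containment first and deduce the structure afterwards discards the inductive leverage that makes the containment provable at all. So the proposal, while organized sensibly at the top level, has a genuine gap at its core step.
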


\begin{proof} 
  We will prove that if   $G$ is a non-solvable group such that $\Delta(G)$ is either disconnected or  connected with cut-vertex $p$,  then  $G/R$ is an almost-simple group  with socle $X/R \in \mathcal{L}$ such that $\Delta(X/R)$ is either disconnected or
  connected with cut-vertex $p$, and  that either $\V G = \pi(G/R)$ or  
  $\V G=\pi(G/R)\cup\{p\}$ (respectively). 
  Assuming that the statement is false, let $G$ be a counterexample having minimal order.
We observe that, by Theorem~\ref{LewisWhite} and Remark~\ref{verticesdisc}, $\Delta(G)$ is connected  and we denote by $p$ a cut-vertex of $\Delta(G)$. 
By Proposition~\ref{nonabeliansocle}, we see that $G$ does not have any non-abelian minimal normal subgroup.
We remark that the choice of $G$ implies that, for every solvable normal subgroup $N$ of $G$, the set $\V G$ is strictly larger than $\V{G/N}\cup\{p\}$. In fact, assuming the contrary, as $\Delta(G/N)$ is a subgraph of $\Delta(G)$, we get that $\Delta(G/N)$ is either disconnected or connected with cut-vertex $p$. Now, by the minimality of $G$  we have that $G/R\cong (G/N)/(R/N)$ is an almost-simple group whose socle \(X/R\in\mathcal{L}\) is such that \(\Delta(X/R)\) is either disconnected or connected with cut-vertex \(p\), and $\V{G/N}\cup\{p\} \subseteq \V{G/R}\cup\{p\}$; so, $G$ would not be a counterexample. 

Moreover, we claim that  the generalized Fitting subgroup $\fitg{G}$ of $G$  and the Fitting subgroup $\fit G$ of $G$ coincide.
In fact, let $\lay G$ denote the subgroup generated by all the components of $G$. Arguing by contradiction, assume $\lay G\neq 1$: as $G$ does not have any non-abelian minimal normal subgroups, we have $Z=\zent{\lay G}\neq 1$. Now, as observed in the
previous paragraph, there exists a prime $q\in\V G-\V{G/Z}$; so, if $Q\in\syl q G$, then $QZ$ is a normal subgroup of $G$ and
$QZ/Z$ is abelian. Note that $q$ does not divide $|\lay G/Z|$, because $\pi(\lay G/Z)=\V{\lay G/Z}\subseteq\V{G/Z}$.
Since $QZ$ is solvable, it centralizes $\lay G$, and so $Z$ is central in $QZ$. Therefore $Q$ is normal in $G$, and now
we must have $Q\cap Z\neq 1$, as otherwise $Q\cong QZ/Z$ would be an abelian normal Sylow $q$-subgroup of $G$, against
$q \in \V G$. The conclusion is that $\lay G$ has a non-trivial central Sylow $q$-subgroup, a contradiction.

Next we claim that the Frattini subgroup $\frat G$ of $G$ is trivial. Assume the contrary, and consider a minimal normal
subgroup $M$ of $G$ such that $M \leq \frat G$. If $q\in\V G-\V{G/M}$, then $G/M$ has an abelian normal Sylow $q$-subgroup,
but the fact that $\fit{G/M}=\fit G/M$ yields that $G$ has a normal Sylow $q$-subgroup $Q$ as well.
Since $Q$ cannot be abelian, then $Q\cap M\neq 1$, so $M\subseteq Q$; in fact we have $M=Q'$, and we see that $q$ is
uniquely determined by $M$. In other words, $\V G-\V{G/M}=\{q\}$ and, as observed in the first paragraph, $q\neq p$.
Now, by our assumptions, the graph $\Delta(G) - p$  is disconnected, and we denote by $\pi$ the connected component of $q$ in this subgraph.
Let $L$ be a $q$-complement of $G$, and consider any non-principal $\lambda\in\irr{M}$. By Theorem 13.31 of~\cite{Is}, and by
the fact that $M\subseteq\zent{Q}$, there exists $\psi\in\irr{Q|\lambda}$ such that the inertia subgroup
$I_L({\psi})$ coincides with $I_L({\lambda})$; moreover, by coprimality, $\psi$ has an extension to $I_G(\psi)=QI_L(\psi)=QI_L(\lambda)$.
Note that, clearly, $\psi(1)$ is divisible by $q$.
Now,  let  $t\in \V G - \pi$, $t\neq p$. In particular, $q$ and $t$ are not adjacent in $\Delta(G)$ and hence Gallagher's
theorem and Clifford correspondence yield that $I_L(\lambda)$ contains a Sylow $t$-subgroup of $L$ as a normal subgroup
(and this Sylow subgroup is also abelian).
On one hand, this implies that $\cent L M$ (which is contained in $I_L(\lambda)$) has an abelian normal Sylow $t$-subgroup.
On the other hand, \cite[Theorem~2.5]{DKP} ensures that $L/\cent LM$ is a group of semilinear maps on $M$, so it is solvable.
Hence,  $L$, and therefore $G=QL$, is $t$-solvable.  
We conclude that the connected component $\pi$ of $\Delta(G)-p$ contains all the   prime divisors of any non-solvable chief factor of $G$. In particular, it contains a prime different from $q$ and this implies that  $\Delta(G/M)$ is either disconnected or
connected with cut-vertex $p$.
By the minimality of $G$, and since $R(G/M) = R/M$, we have that  $G/R \cong (G/M)/(R/M)$ is an  almost-simple group with
socle $X/R \in \mathcal{L}$ and that $\V{G/M} = \pi(G/R) \cup \{p\}$. 
Writing $\pi(G/R) = \pi(X/R) \cup \sigma$, with $\sigma \cap \pi(X/R) = \emptyset$, by Theorem~\ref{MoretoTiep} we see
that if $\sigma \neq \emptyset$, then $X/R$ is a simple group of Lie type of characteristic (say) $r$ and that all primes
in $\sigma$ are adjacent to every prime in $\pi(X/R)$, except possibly to $r$, in $\Delta(G/R)$ and hence in $\Delta(G)$.
Since, as observed above, every prime divisor of \(|X/R|\) lies in \(\pi\), we deduce  that  $\V{G/R}$ is contained in the connected component $\pi$ of $q$ in  $\Delta(G)-p$ and hence,
recalling that $\V G - \{q\} = \V{G/M} = \pi(G/R) \cup \{p\}$, $\Delta(G) - p$ would be connected, a contradiction.  
Hence, $\frat G=1$.

So, $F = \fitg G = \fit G$ is a direct product \(M_1\times\cdots\times M_k\) of (abelian) minimal normal subgroups of $G$. Since $F=\cent G F=\bigcap_{i=1}^k\cent G{M_i}$, the factor group \(G/F\) embeds in the direct product of the \(G/\cent G{M_i}\), and we deduce that there exists a minimal normal subgroup $M$ of $G$ such that $G/\cent GM$ is non-solvable.
Since $M$ is abelian and $\frat G = 1$, $M$ has a complement $H \cong G/M$ in $G$. Let $\o G = G/\cent HM$. 
Let $\pi_0 = \V G - (\V{G/M} \cup \{p\})$, $q \in \pi_0$ and $Q$ a Sylow $q$-subgroup of $G$.  
As $QM/M$ is an abelian normal subgroup of $G/M$,  we observe that $q$ does not divide $|M|$, as otherwise $M = Q' \leq \frat Q \leq \frat G = 1$,  a contradiction. So, $Q$ is abelian, and we can assume $Q \leq H$; also, we have $[QM, QM] = [M, Q] \nor G$ and
it follows that $[M, Q] = M$. Denoting by \(\widehat{M}\) the dual group of \(M\), we then get $[\widehat{M}, Q] = \widehat{M}$:
in particular, $q$ divides $|H: I_H(\mu)|$ for every non-trivial $\mu \in \widehat{M}$.
Let $s \in \V G$ be a prime that is not adjacent to $q$ in $\Delta(G)$ (recall that, \(q\) being different from \(p\), such an $s$ certainly exists). 
Then, as $\mu$ extends to $I_G(\mu) = M I_H(\mu)$, Gallagher's theorem and Clifford correspondence imply that
$I_H(\mu)$ contains a Sylow $s$-subgroup of $H$ as a normal subgroup, and this Sylow \(s\)-subgroup is also abelian, for every non-trivial $\mu \in \widehat{M}$. Note that \(s\) divides \(|\overline{H}|\), since otherwise a Sylow \(s\)-subgroup of \(H\) would be contained in \(\cent H M\), and it would easily follow that \(G\) has an abelian normal Sylow \(s\)-subgroup, against the fact that \(s\in\V G\).

Let $\pi$ be the connected component of $\Delta(G) - p$ that contains $q$.
Since by assumption $\Delta(G) - p$ is disconnected, the set $\delta = \V G -(\pi \cup \{p\})$ is
non-empty. 
By the previous paragraph, $\pi_0 \subseteq \pi$ and $(\o H, \widehat{M})$ satisfies $\mathcal{N}_s$ for every $s \in \delta$.
As $\delta \neq \emptyset$, Proposition~\ref{P3.12+} yields that the solvable radical $\o Y$ of $\o H$ is cyclic.
Let $\o X /\o Y$ be a minimal normal subgroup of $\o H / \o Y$; then $\o X / \o Y \cong S^k$, where $S$ is a non-abelian simple group. 
If $\pi(S) - \{p\} \subseteq \delta$, then (since $\o H$ has abelian Hall $\delta$-subgroups) $S$   has an abelian
$p$-complement, so there exists a non-trivial element $x$ of $S$ such that $|S:\cent Sx|$ is a power of the prime $p$, and this
is not possible by Burnside's theorem (\cite[Theorem 3.9]{Is}).
So, the connected component $\pi$ of $\Delta(G) -p$ intersects non-trivially $\pi(S)$ and, since $\pi(S) \subseteq \V{G/M}$,
it follows that $\Delta(G/M)$ is either disconnected or connected with cut-vertex $p$. By the minimality of $G$,
we have that $H \cong G/M$ has a unique non-solvable composition factor $S \in \mathcal{L}$, and that $\Delta(S)$ is either disconnected or connected with cut-vertex $p$. 
Hence, $\o X/\o Y \cong S$; observe also that $\Delta(S)$ is a subgraph of $\Delta(G)$.
As $\o Y$ is cyclic, $\o Y \leq \zent{\o X}$ and then $\o K = \o{X}'$ is a quasi-simple group (\cite[(31.1)]{A}). Now, if \(q\) divides $|\overline{K}|$, then ($\overline{K}$ being perfect) it divides $|\overline{K}/\zent{\overline{K}}|$ as well, hence it divides $|S|$ and it is therefore a vertex of \(\Delta(G/M)\), not our case.
Thus, recalling that $\o Q$ is a normal subgroup of $\o H$, we deduce that $\o Q$ centralizes $\o K\trianglelefteq\o H$;
moreover, we have that $(\o K, \widehat{M})$ satisfies $\mathcal{N}_s$ for every $s \in \delta \cap \pi(S)$. 
But we know that $S$ lies in $\mathcal{L}$, whence Lemma~\ref{Nqvari} yields that either $S \cong {\rm PSL}_2(t^a)$ or $S \cong {\rm Sz}(2^a)$.

In the former case, we get either $\o K = S$ or  $\o K \cong {\rm SL}_2(t^a)$. Recall that $(\overline{K},\widehat{M})$ satisfies \(\mathcal{N}_s\) for every \(s\in\delta\): so,  by Lemma~\ref{SL2Nq} and Remark~\ref{remPSL}, as $\widehat{M}$ is a faithful $\o K$-module, either  ($s = t$ and)  $|\widehat{M}| = t^{2a}$, or ($s= 3$ and) $|\widehat{M}| = 3^4$ with $\o K \cong {\rm SL}_2(5)$, or finally ($s= 3$ and) $|\widehat{M}| = 3^6$ with $\o K \cong {\rm SL}_2(13)$.
If $|\widehat{M}| = t^{2a}$, then we can see $\o H$ as a subgroup of ${\rm GL}_{2a}(t)$ and, 
by Lemma~\ref{clg}, $|\cent {\o H}{\o K}|$ divides  $|\o K|$.  Since $\o Q \leq \cent{\o H}{\o K}$ has order
coprime to $|\o K|$, we get a contradiction as  $\o Q \cong Q \neq 1$.
If $|\widehat{M}| = 3^4$, then $\o K \cong {\rm SL}_2(5)$ can be seen as a subgroup of $\o L = {\rm GL}_4(3)$; one computes that
then $|\cent{\o L}{\o K}| = 8$, which is not possible since $\o Q \leq \cent{\o L}{\o K}$.
If $|\widehat{M}| = 3^6$, then we see $\o K \cong {\rm SL}_2(13)$  as a subgroup of $\o L = {\rm GL}_6(3)$; one computes that
$\cent {\o L}{\o K} = \zent{\o L}$, of order $2$, and this is again impossible as $\o Q \leq \cent{\o L}{\o K}$.

On the other hand, if $S \cong {\rm Sz}(2^a)$, then by the structure of the degree graph of the Suzuki groups and the fact that $\Delta(S)$
is connected with cut-vertex $p$, we deduce that $2^a-1 = p$ and that $\delta \cap \pi(S)$ is either $\{ 2\}$ or
$\pi(2^{2a}+1)$. The first possibility is ruled out by Lemma~\ref{Nqvari}
and the second one is excluded because $S$ does not have abelian $\pi(2^{2a}+1)$-Hall subgroups. This is the final contradiction that completes the proof.
\end{proof}
%

%
\section{The main result}\label{proofmain}
We are now ready to prove the main result of this paper.

\begin{theorem}\label{main}
  Let $G$ be a non-solvable group with no composition factors isomorphic to ${\rm PSL}_2(t^a)$ ($t$ prime, $t^a\geq 4$),
  let $R$ and $K$ be, respectively, the solvable radical and the solvable residual of $G$, and let \(p\) be a prime number.
  Then the graph \(\Delta(G)\) is connected and has cut-vertex $p$ if and only if 
one of the following holds.  
\begin{enumeratei}
\item $K \cong {\rm{Sz}}(2^a)$,  $a\geq 3$ is a prime, \(p=2^a-1\)  and $\V{G/K}\sbs \{p\}$.
\item  $K \cong {\rm{PSL}}_3(4)$,   $|G:KR|\in \{1,3\}$, \(p=5\) and $\V{G/K}\sbs \{5\}$.
\item $K \cong {\rm{M}}_{11}$,  $G=K\times R$, \(p=5\) and $\V R\sbs \{5\}$.
\item $K \cong {\rm{J}}_1$,  $G=K\times R$, \(p=2\) and $\V R\sbs \{ 2\}$.
\end{enumeratei}
Moreover,  $p$ is the unique cut-vertex of $\Delta(G)$  and $\Delta(G) - p$ has two complete connected components.
In all cases except case {\rm{(d)}}, one of the connected components of $\Delta(G) - p$ has size $1$ and, except case {\rm{(d)}} when
$R$ is abelian, $p$ is a complete vertex of $\Delta(G)$. 
\end{theorem}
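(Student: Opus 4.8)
The plan is to extract the skeleton of the structure directly from Theorem~\ref{0.2} and then determine the remaining invariants one simple group at a time. For the ``only if'' direction, assume $\Delta(G)$ is connected with cut-vertex $p$. Theorem~\ref{0.2} gives at once that $G/R$ is almost-simple with socle $S\in\mathcal{L}$, that $\V G=\pi(G/R)\cup\{p\}$, and that $\Delta(S)$ is either disconnected or connected with cut-vertex $p$. Since $G$ has no composition factor isomorphic to $\PSL{t^a}$, we have $S\not\cong\PSL{t^a}$; by Theorem~\ref{CDsimple}(c) this rules out the disconnected alternative, so $\Delta(S)$ is connected with cut-vertex $p$, and Theorem~\ref{CDsimple}(b) then forces $S$ to be one of ${\rm Sz}(2^a)$ with $2^a-1$ prime (hence $a$ prime), ${\rm PSL}_3(4)$, ${\rm M}_{11}$ or ${\rm J}_1$. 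In each case $p$ is pinned down by the explicit shape of $\Delta(S)$: Lemma~\ref{Sz} shows that the isolated vertex $2$ of $\Delta({\rm Sz}(2^a))$ has unique neighbour $2^a-1$, giving $p=2^a-1$, while the degree graphs of ${\rm PSL}_3(4)$, ${\rm M}_{11}$, ${\rm J}_1$ recalled above give $p=5$, $p=5$ and $p=2$ respectively. This produces cases (a)--(d).

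\textbf{The solvable residual equals the socle.} The core of the proof---and the step I expect to be the main obstacle---is to show that $K\cong S$ is a minimal normal subgroup of $G$, equivalently that $N:=K\cap R=1$. First note that $KR/R$ is the solvable residual of the almost-simple group $G/R$, so $KR/R\cong S$ and $K/N\cong S$. Suppose $N\neq1$ and pick a minimal normal subgroup $L\leq N$ of $G$, an elementary abelian $r$-group. If $K$ does not centralise $L$, then reproducing the module-theoretic analysis from the end of the proof of Theorem~\ref{0.2} (with Proposition~\ref{P3.12+} ensuring that the relevant solvable radical is cyclic) one produces a quasi-simple section $\o K$ of $K$, with $\o K/\zent{\o K}\cong S$, acting faithfully on the dual $\widehat L$ so that $(\o K,\widehat L)$ satisfies $\nq$ for every prime $q$ in the connected component $\delta$ of $\Delta(G)-p$ not containing the primes introduced by $L$. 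For $S\in\{{\rm M}_{11},{\rm PSL}_3(4),{\rm J}_1\}$ this contradicts Lemma~\ref{Nqvari} outright; for $S\cong{\rm Sz}(2^a)$ the shape of $\Delta(S)$ leaves only $\delta\cap\pi(S)\in\{\{2\},\pi(2^{2a}+1)\}$, the former excluded by the case $q=2$ of Lemma~\ref{Nqvari} and the latter because ${\rm Sz}(2^a)$ has no abelian $\pi(2^{2a}+1)$-Hall subgroup. If instead $K$ centralises $L$, then $L\leq\zent K$ and $K$ is a proper quasi-simple cover of $S$: this is impossible for ${\rm M}_{11}$ and ${\rm J}_1$ (trivial Schur multiplier) and is excluded for ${\rm Sz}(8)$ and ${\rm PSL}_3(4)$ by inspecting the faithful irreducible degrees of the relevant covers, which would create an edge reconnecting the two components of $\Delta(G)-p$. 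Therefore $N=1$, and $K\cong S$ is a simple minimal normal subgroup of $G$.

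\textbf{Completing the structure.} Since $K\cong S$ and $K\cap R=1$ we get $[K,R]\leq K\cap R=1$; thus $G=KR$ exactly when $G/R\cong S$, i.e. when $\out S$ contributes nothing. As $\out{{\rm M}_{11}}$ and $\out{{\rm J}_1}$ are trivial, in cases (c) and (d) we obtain $G=K\times R$. For ${\rm PSL}_3(4)$ a finite check of the degree graphs of the almost-simple groups with that socle (where the field/graph automorphisms would make $5$ cease to be a cut-vertex) yields $|G:KR|\in\{1,3\}$. Finally $\V{G/K}\subseteq\{p\}$ (equivalently $\V R\subseteq\{p\}$ in the direct-product cases): a prime $r\in\V{G/K}$ with $r\neq p$ would, by the Clifford/product adjacency mechanism established in the proof of Proposition~\ref{nonabeliansocle}, be adjacent to every vertex of $\pi(S)$, hence would be a second complete vertex and $\Delta(G)-p$ would remain connected.

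\textbf{Converse and the final assertions.} For the ``if'' direction one checks that each configuration gives $\Delta(G)$ connected with cut-vertex $p$: since $\V{G/K}\subseteq\{p\}$, every prime other than $p$ dividing a degree of $G$ already divides a degree of $K$, so $\Delta(G)-p=\Delta(S)-p$; by Lemma~\ref{Sz} and the graphs recalled above these are two complete graphs, and $p$ (being a cut-vertex of $\Delta(S)$) has a neighbour in each, whence $\Delta(G)$ is connected with cut-vertex $p$. In cases (a), (b), (c) one of the two components is a single vertex ($2$, $2$, $3$ respectively), which also exhibits a cut-edge. The vertex $p$ is complete in $\Delta(S)$ in cases (a)--(c); in case (d), $2$ is not adjacent to $11$ in $\Delta({\rm J}_1)$, so $p=2$ is complete precisely when a non-abelian $R$ with $\V R=\{2\}$ supplies the edge $2$--$11$, which is why the completeness statement excludes case (d) with $R$ abelian. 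Uniqueness of the cut-vertex holds in every case because the two components of $\Delta(G)-p$ are complete and $p$ retains a neighbour in each: deleting any vertex $v\neq p$ leaves both (truncated) complete components still joined through $p$, so $\Delta(G)-v$ stays connected.
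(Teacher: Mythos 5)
Your overall architecture matches the paper's: reduce via Theorem~\ref{0.2} to an almost-simple $G/R$ with socle in $\{{\rm Sz}(2^a),{\rm PSL}_3(4),{\rm M}_{11},{\rm J}_1\}$, show $K\cap R=1$, pin down $\V{G/K}$ and $|G:KR|$, and verify the converse with the Steinberg/Gallagher argument. The converse and the final assertions are handled essentially as in the paper.

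The genuine gap is in your central step, the proof that $N=K\cap R=1$ when $K$ does not centralise $L$. You claim one can ``reproduce the module-theoretic analysis from the end of the proof of Theorem~\ref{0.2}'' to conclude that $(\o K,\widehat L)$ satisfies $\nq$ for every prime $q$ in a component $\delta$ of $\Delta(G)-p$, and then invoke Lemma~\ref{Nqvari}. But that analysis is not portable: in Theorem~\ref{0.2} the condition $\mathcal{N}_s$ was extracted from a \emph{minimal counterexample} in which $\frat G=1$, the abelian minimal normal subgroup $M$ was complemented (so every $\mu\in\irr M$ extends to $I_G(\mu)$, enabling Gallagher), and -- crucially -- there was a prime $q\in\V G-(\V{G/M}\cup\{p\})$ whose abelian Sylow subgroup satisfies $[\widehat M,Q]=\widehat M$, forcing $q$ to divide \emph{every} index $|H:I_H(\mu)|$; only then does non-adjacency of $q$ and $s$ force a normal Sylow $s$-subgroup in every stabiliser. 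In the present situation none of this is available: $L$ need not introduce any new vertex, $L/L_0$ need not be complemented, and there is no prime dividing all orbit sizes. Indeed, for ${\rm M}_{11}$ and ${\rm J}_1$ no $\mathcal{N}_q$ condition can be established (e.g.\ the $5$-dimensional $\mathbb{F}_3$-module for ${\rm M}_{11}$ has orbit sizes $22$ and $220$, both prime to $3$, so $3$ divides no orbit size), which is precisely why the paper abandons the $\mathcal{N}_q$ route for these two groups and instead uses the triviality of the Schur multiplier, the regular-orbit theorems of \cite{KP} and \cite{FMOW} to reduce to a short explicit list of modules, and GAP computations of orbit sizes (and of an irreducible character degree of $V\rtimes{\rm M}_{11}$) to produce forbidden edges. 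Even for ${\rm Sz}(2^a)$ and ${\rm PSL}_3(4)$, where $\mathcal{N}_2$ \emph{is} obtained, the paper derives it not from the Theorem~\ref{0.2} machinery but from the classification of maximal subgroups of $K/L$ combined with the fact that $2$ is adjacent only to $p$ -- and the relevant prime is $2$, the singleton component, not an arbitrary prime of $\delta$. As written, your argument leaves the cases $S\cong{\rm M}_{11}$ and $S\cong{\rm J}_1$ with $L\neq1$ unproven, and this is where the real work of the theorem lies.
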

\begin{proof}  
  By Theorem~\ref{0.2} we know that $G/R$ is an almost-simple group with socle $X/R$ and,
  as $X/R \not\cong {\rm{PSL}}_2(t^a)$ by our assumptions, $X/R$ is isomorphic to a simple group in the following list:
  $$\mathcal{L}_0 =  \{ {\rm{Sz}}(2^a)\; (a \geq 3, \;a \text{ odd}), \;{\rm{PSL}}_3(4),\; {\rm{M}}_{11},\; {\rm{J}}_1\}.$$
  Moreover, since the degree graphs of the groups in $\mathcal{L}_0$ are connected (see Theorem~\ref{CDsimple}), by Theorem~\ref{0.2} we also have that
$p$ is a cut-vertex of  $\Delta(X/R)$.  

  We observe that $KR = X$, because  $KR$ certainly contains $X$, and $X$ contains $KR$, as $K/K\cap X \cong KX/X$
  is solvable, thus \(X\supseteq K\). Now, setting $L = K \cap R$, we see that   $K/L \cong X/R$.
  We will first show that $L = 1$: working by contradiction, we assume that $L \neq 1$.
Let $L/L_0$ be a chief factor of $K$ (possibly $L_0=1$) and consider a non-principal character $\lambda\in \irr {L/L_0}$.
If $\lambda$ is $K$-invariant, then $\ker{\lambda}/L_0$ is a proper $K$-invariant subgroup of $L/L_0$.
Hence,  $\ker{\lambda} = L_0$ and  $\lambda$ is a faithful and $K$-invariant linear character of $L/L_0$. It follows that
$L/L_0$ is cyclic and $L/L_0 \leq \zent{K/L_0}$. Since $K/L_0$ is a perfect group, by~Theorem 11.19 of \cite{Is} it follows
that $L/L_0$ is isomorphic to a subgroup of the Schur multiplier ${\rm{M}}(K/L)$.
In particular, we deduce that no non-principal $\lambda \in \irr{L/L_0}$ can be $K$-invariant if ${\rm{M}}(K/L) = 1$. 
We will now derive a contradiction, by separately considering the various possibilities for the simple group $K/L$.

  \medskip
  \noindent
  $K/L \cong {\rm{Sz}}(q)$ (where  $q = 2^a$ and  $a \geq 3$ is an odd integer). 

  By Lemma~\ref{Sz} and Theorem~\ref{CDsimple}, we know that
  $2^a-1 = p$ and that  $2$ is only adjacent to $p$ in $\Delta(K/L)$.
  Let $\lambda \in \irr{L/L_0}$.
  Assuming that $\lambda$ is $K$-invariant, since the Schur multiplier of $\mathrm{Sz}(2^a)$ is trivial for $a \geq 5$ (\cite[Theorem 1]{AG}), then
$a=3$, $p = 7$ and ${\rm{M}}(K/L) \cong C_2\times C_2$ (\cite[Theorem 2]{AG}).
As one can check in~\cite{atlas},  a perfect central extension of $\rm{Sz}(8)$ by a
subgroup of order $2$ has irreducible characters of degree $40$, and this is a contradiction.
Therefore, $I_K(\lambda)/L$ is a proper subgroup of $K/L$, and hence it is contained in a maximal subgroup,  of $K/L$.
  We recall that the maximal subgroups of
  ${\rm{Sz}}(q)$ are isomorphic to one of the following groups (see, for instance, \cite[Theorem 4.1]{Wilson}).
\begin{enumeratei}
\item Frobenius groups $Q\rtimes C_{q-1}$ where $Q\in \syl 2 {\text{Sz}(q)}$;
\item Suzuki simple groups ${\rm{Sz}}(q_0)$, where $q= q_0^r$, $r$ is a prime and $q_0 \geq 8$;
\item Dihedral groups $D_{2(q-1)}$;
\item $C_{f_{+}}\rtimes C_4$, where $f_{+} = q +\sqrt{2q}+1$.
\item $C_{f_{-}}\rtimes C_4$, where $f_{-} = q -\sqrt{2q}+1$.
\end{enumeratei}
Since $2$ is only adjacent to $p$ in $\Delta(K/L)$, by Clifford correspondence the index $|K:I_K(\lambda)|$ must be either odd
or coprime to $q^2+1$, and hence the unique possibility for  $I_K(\lambda)/L$ is to be contained
in a Frobenius group $Q \rtimes C_{q-1}$ (case (a) above) and that $I_K(\lambda)/L$ contains a Sylow $2$-subgroup $Q$ of $K/L$.
This is true for every non-trivial $\lambda$ in the dual group $V$ of $L/L_0$, and hence  $(K/L, V)$ verifies the condition 
$\mathcal{N}_2$; but this is impossible by Lemma~\ref{Nqvari}.

    \medskip
  \noindent
  $K/L \cong \rm{PSL}_3(4)$.
  
  Recall that \(\Delta(K/L)\), which is a subgraph of \(\Delta(G)\), is a connected graph whose vertex set is \(\{2,3,5,7\}\) and it has cut-vertex \(5\) (See Figure~1); moreover, in \(\Delta(K/L)\), the vertex \(5\) is the only neighbour of \(2\). Since we have $\V G = \pi(K/L)\cup\{p\}$ by Theorem~\ref{0.2}, the disconnectedness of \(\Delta(G)-p\) forces $p=5$. Hence, $2$ is adjacent only to $5$ in $\Delta(G)$ as well.
  Assume first that there exists a non-principal $\lambda \in \irr{L/L_0}$ which is $K$-invariant. 
 As the Schur multiplier of $\text{PSL}_3(4)$ is isomorphic to $C_4\times C_4 \times C_3$ (\cite{atlas}), by the above discussion
 we deduce that $L/L_0$ is a cyclic central subgroup of $K/L_0$,  and that $|L/L_0|$ is either  $2$ or $3$.
 One can check in~\cite{atlas} that in the first case $K/L_0$ has an
 irreducible character of degree $28$, while in the second case $K/L_0$ has an irreducible character of degree $84$, giving a contradiction in both cases. 
So we may assume that $I_K(\lambda)<K$, and hence  $I_K(\lambda)/L$ is contained in a maximal subgroup $H/L$ of $K/L$.
The maximal subgroups of $\text{PSL}_3(4)$ are isomorphic to $(C_2)^4\rtimes A_5$, $A_6$, $\text{PSL}_3(2)$, and $(C_3)^2 \rtimes Q_8$ (\cite{atlas}) and, since $2$ is only adjacent to $5$ in $\Delta(G)$, it follows that
$H/L$ is isomorphic to $(C_2)^4\rtimes A_5$. Moreover, $I_K(\lambda)/L$ is forced to contain a Sylow \(2\)-subgroup of \(K/L\): in other words, the action of \(K/L\) on the dual group of \(L/L_0\) satisfies condition \(\mathcal{N}_2\). However, this is impossible by Lemma~\ref{Nqvari}, and the desired conclusion is achieved in this case.

    \medskip
  \noindent
  $K/L \cong \rm{M}_{11}$.
  
By Theorem \ref{0.2} and  the structure of  $\Delta(\rm{M}_{11})$ (see Figure~1), we know  that $p = 5$ is a cut-vertex of both
$\Delta(G)$ and  $\Delta(K/L)$,  that $\V G = \{2,3,5,11 \}$ and that $3$ is adjacent only to $5$ in $\Delta(G)$.
As $K/L \cong M_{11}$ has trivial Schur multiplier (\cite{atlas}), then $K/L$ acts faithfully and irreducibly on the dual group $V$ of $L/L_0$ and,
since $K/L$ cannot have any regular orbit on $V$ (as otherwise $\Delta(G)$ would be a complete graph)
by \cite[Theorem~2.3]{KP} and \cite[Theorem 1.1]{FMOW} we deduce that $V$ is either a $10$-dimensional $\mathbb{F}_q[K/L]$-module, with $q \in \{2, 3\}$,
or $V$ is a $5$-dimensional $\mathbb{F}_3[K/L]$-module.
In the first case, one computes (via GAP \cite{GAP}) that there is a $K/L$-orbit in $V$ whose length is divisible by all the prime divisors of $K/L$, so by Clifford theory $\Delta(G)$ is a complete graph, a contradiction.
On the other hand, there are two isomorphism types of $5$-dimensional irreducible  $\mathbb{F}_3[K/L]$-modules.
One of them has a $K/L$-orbit of size $132 = 2^2\cdot3\cdot 11$, so by Clifford theory $3$ and $11$ would be adjacent in $\Delta(G)$, a contradiction. The second one, which we keep denoting as  $V$,  has non-trivial orbit sizes $22$ and $220$, is self-dual  and the second cohomology group $H^2(K/L, V)$ is trivial.
One checks again via GAP \cite{GAP} that the semidirect product $H =  V \rtimes K/L$ has an irreducible character of degree $660 = 2^2\cdot 3 \cdot 5 \cdot 11$ and, since $H$ is isomorphic to a normal section of $G$, it follows that $\Delta(G)$ is a complete graph, a contradiction.

    \medskip
  \noindent
  $K/L \cong {\rm J}_1$.
  
By Theorem \ref{0.2} and  the structure of  $\Delta(\rm{J}_{1})$ (see Figure~1), we know  that $p = 2$ is a cut-vertex of both
$\Delta(G)$ and  $\Delta(K/L)$, and  that $\V G = \pi(K/L) = \{2,3,5,7,11, 19 \}$.
As $K/L \cong {\rm{J}}_{1}$ has trivial Schur multiplier (\cite{atlas}), then $K/L$ acts faithfully and irreducibly on the dual
group $V$ of $L/L_0$.
Again Theorem~2.3 of \cite{KP} and Theorem~1.1 of \cite{FMOW} ensure that either $K/L$ has a regular orbit on $V$ or  
 $V$ is the $20$-dimensional $\mathbb{F}_2[K/L]$-module.
One checks via GAP \cite{GAP} that in the last case  there is a $K/L$-orbit in $V$ of size $2^2\cdot 3 \cdot5 \cdot 7 \cdot 11 \cdot 19$. Hence, in any case, by Clifford theory  $\Delta(G)$ is a complete graph, a contradiction. 

\medskip
Our conclusion so far is that $L = 1$, and hence that  $K$ is a minimal normal subgroup of $G$. 
We will conclude the proof of the claims (a)--(d)  by again  considering separately the possibilities for
$K \in \mathcal{L}_0$.
By Theorem~\ref{CDsimple} $\Delta(K)$ is connected and,  by Theorem~\ref{0.2}, $p$ is a cut-vertex of both $\Delta(G)$ and  
$\Delta(K)$. 
Let
$C = \cent GK$. As $R \cap K = 1$, we have $R \leq C$ and, since 
$G/R$ is almost simple,
we deduce that $R = C$.

If $K \cong {\rm{Sz}}(2^a)$ where  $a\neq 1$ is an odd positive integer then, by Lemma~\ref{Sz},
 we know that the vertex $2$ is only adjacent in $\Delta(K)$ to the prime divisors of $2^a-1$. 
 It follows that $2^a-1$ is a power of the prime $p$, so $2^a - 1 = p$ is a Mersenne prime by Proposition 3.1 of \cite{MW}, and
 hence $a$ is a prime number. 
 Let $\theta$ be the Steinberg character of
 the socle $X/R \cong K$ of $G/R$; so $\theta(1)$ is a power of $2$ and $\theta$ has an extension $\psi$
 to $G/R$. Seeing $\psi$ as an irreducible character of $G$ by inflation, then $\psi$ is an extension of the character
 $\theta_0\in \irr K$ that corresponds to $\theta$ in the natural isomorphism between $X/R$ and $K$. 
 By Gallagher's theorem we get  $\irr{G\,|\, \theta_0}= \{\psi \gamma \,|\,  \gamma\in \irr {G/K}\} $, and hence $2$ is
 adjacent in $\Delta(G)$ to every prime in $\V{G/K}$; this forces  $\V{G/K}\sbs \{p\}$ and (a) is proved. 

 If $K \cong {\rm PSL}_3(4)$ then, by Theorem~\ref{0.2},  $\V G = \pi(K) = \{2, 3, 5, 7\}$ and, as $p$ is a cut-vertex of both $\Delta(G)$ and $\Delta(K)$, then $p=5$ (see Figure~1) and $\Delta(G) = \Delta(K)$. 
 The outer automorphism group of $\rm{PSL}_3(4)$ is isomorphic to $C_2\times S_3$ and all almost-simple extensions of
 $\rm{PSL}_3(4)$ by a subgroup of order $2$ have an irreducible character of degree divisible by $6$ (\cite{atlas}). 
It follows that $|G: KR|$ is odd, and hence  $|G:KR|\in\{1,3\}$.
As in the previous paragraph, the Steinberg character $\theta_0$ of $K$ has an extension $\psi$ to $G$,
and then  by Gallagher's theorem $\irr{G\,|\,\theta_0}= \{\psi \gamma \,|\,  \gamma\in \irr {G/K}\} $.
Hence $2$ is adjacent to every prime in $\V{G/K}$, which forces $\V{G/K}\sbs \{5\}$ and 
(b) is proved. 

If $K \cong \rm{M}_{11}$ or $K \cong \rm{J}_{1}$,  then $G = K \times R$ because the outer automorphism group of $K$ is trivial. Hence, $\Delta(G) = \Delta(K) \ast \Delta(R)$ is the join of the degree graphs of $K$ and $R$ and, since $G$ has cut-vertex $p$,
recalling the graphs of $\rm{M}_{11}$ and $\rm{J}_1$ (see Figure~1) it follows that $\V R \sbs \{ p \}$.
If $K \cong \rm{M}_{11}$,  then $p =5$ is a complete vertex of $\Delta(G) = \Delta(K)$. If $K \cong \rm{J}_1$, then
$p =2$ and either  $\Delta(G) = \Delta(K)$, or $\Delta(G)$ is obtained from $\Delta(K)$ by adding an edge between $2$ and $11$.
So, also (c) and (d) are proved and the proof of the `only if' part of the statement is complete.  

\medskip
\noindent
Conversely, assume that 
one of the conditions
(a), (b), (c) or (d) holds. Since $K$ is a non-abelian simple group and $G/K$ is solvable, $\cent GK$ (which is isomorphic to a subgroup of $G/K$) is solvable as well and hence,
as the solvable radical $R$ of $G$ is contained in $\cent GK$, we see that $R = \cent GK$. Morevoer, $G/KR$ is isomorphic to a
subgroup of the outer automorphism group of $K$. 

Assume first (a), so  $K \cong {\rm{Sz}}(2^a)$ where $p = 2^a-1$ is a prime, $a \geq 3$ is a prime  and $\V{G/K}\sbs \{p\}$.
The outer automorphism group of a Suzuki simple group consists only of field automorphisms (\cite{atlas}), so  $G/KR$ is either trivial or of order $a$. Recalling that \(p\) is a complete vertex of \(\Delta(K)\), if $G = K \times R$  then $\Delta(G) = \Delta(K)$ and $p$ is both a complete vertex and the unique cut-vertex  of $\Delta(G)$. 
So, we can assume that $|G/KR| = a$, and by Theorem~1.1 of \cite{G} we have that $a$ is adjacent in $\Delta(G/R)$ and hence in
$\Delta(G)$ to every other vertex in  
$\pi(K) - \{ 2 \}$ (by the way, one can easily check that $a \in \pi(K)$ only when $a = 5$).  Hence, $\Delta(G) - 2$ is a complete graph. 
If $\chi \in \irr G$ has even order, then $\chi$ lies above the Steinberg character $\theta$ of $K$, because $a$ is odd and
$\V{R} \sbs \V{G/K}\sbs \{ p \}$.
Since, by the same argument used above, $\theta$ is $G$-invariant and extends to $G$, by Gallagher's theorem
$\chi(1) = \theta(1) \beta(1)$ for some $\beta \in \irr{G/K}$.  
Hence, $2$ is adjacent only to $p$ in $\Delta(G)$,   and therefore $p$ is a cut-vertex of $\Delta(G)$. Moreover, also in this case \(p\) is a complete vertex of \(\Delta(G)\).

Assume now (b), so $K \cong {\rm{PSL}}_3(4)$, $|G:KR|\in \{1,3\}$ and $\V{G/K}\sbs \{5\}$.
If $G = K \times R$, then $\Delta(G) = \Delta(K)$ as $\V R \sbs \{ 5\}$ and $5$ is a complete vertex of $\Delta(K)$, thus
we are done.
So, we can assume $|G: KR|=3$. If $\chi\in \irr{G}$ has even degree, then as in the previous paragraph we deduce that
$\chi$ lies over the Steinberg character $\theta$ of $K$ and that $\chi(1) = \theta(1)\beta(1)$ for some $\beta \in \irr{G/K}$. 
Therefore $2$ is only adjacent to $5$ in $\Delta(G)$,
and hence again $\Delta(G) = \Delta(K)$. So, $5$ is a both a complete vertex and the only cut-vertex  of $\Delta(G)$. 

Similarly, assuming  (c) (i.e.  $K \cong {\rm{M}}_{11}$),  $G=K\times R$  and $\V R\sbs \{5\}$, we have that
that $\Delta(G) = \Delta(K)$ and all assertions follow.

If we have (d), so $K \cong {\rm{J}}_1$,  $G=K\times R$ and  $\V R\sbs \{ 2\}$, then either
$\Delta(G) = \Delta(K)$ or $\Delta(G)$ is obtained from $\Delta(K)$ by adding the edge $\{2, 11\}$ (so $2$ is a complete vertex of $\Delta(G)$ in this case). It follows that $2$ is the only cut-vertex of $\Delta(G)$, and it is a complete vertex if and only if \(R\) is non-abelian.

\smallskip
Finally, the degree graphs of the relevant groups are displayed in Table~1 (cases (a--d)), and all the remaining assertions of the statement follow at once.
\end{proof}

\section{A proof of Theorem C}\label{proofC}

In this concluding section we prove Theorem~C, that gives a characterization of the non-solvable groups whose degree graph is disconnected and has a cut-vertex. 

We recall first some features of the degree graphs of the almost-simple groups with socle isomorphic to $\PSL{t^a}$.

\begin{lemma}\label{W}
  Let $G$ be an almost-simple group with socle $S \cong \PSL{t^a}$, where \(t\) is a prime and $t^a \geq 4$.
  \begin{enumeratei}
  \item   If \(q\) is a prime divisor of $|G/S|$, then \(q\) is adjacent in \(\Delta(G)\) to every prime in $\pi(S) - \{ t\}$.
  \item Assume $t^a > 5$.  If $t$ divides $|G:S|$ then $\Delta(G)$ is connected, while if $t$ does not divide $|G:S|$, then $t$ is an isolated vertex of $\Delta(G)$. 
\end{enumeratei}
\end{lemma}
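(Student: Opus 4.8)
The plan is to combine the known structure of \(\Delta(S)\) from Lemma~\ref{PSL2} with an analysis of how the outer automorphisms of \(S\) act on \(\irr S\). Throughout I would use that \(G/S\) embeds in \(\out S\cong C_{\gcd(2,t^a-1)}\times C_a\), which is abelian, and that the degrees in \(\cd S\) are \(1\), the Steinberg degree \(t^a\), and degrees whose prime divisors all lie in \(\pi_+\cup\pi_-=\pi(S)-\{t\}\) (namely \(t^a+1\), \(t^a-1\), and, for \(t\) odd, their halves). In particular \(t^a\) is the only degree of \(S\) divisible by \(t\), and every irreducible character of \(\PSL{t^a}\) extends to its inertia subgroup in \(\aut S\) (a standard fact I would invoke for the Clifford-theoretic steps).

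For part (a) I would split according to the nature of \(q\). If \(q\notin\pi(S)\), then \(q\in\pi(G)-\pi(S)\) and Theorem~\ref{MoretoTiep} gives at once that \(q\) is adjacent to every vertex other than the characteristic \(t\), which is the claim. If \(q=2\) (so \(t\) is odd and \(q\ne t\)), then \(2\in\pi_+\cup\pi_-\) and Lemma~\ref{PSL2}(b) says \(2\) is a complete vertex of \(\Delta_{\pi_+\cup\pi_-}\subseteq\Delta(G)\). The substantive case is \(q\) an \emph{odd} prime dividing \(a\), so \(G\) contains a field automorphism \(\phi\) of order \(q\); here I would show that \(\phi\) moves some \(\chi_+\in\irr S\) of degree \(t^a+1\) and some \(\chi_-\in\irr S\) of degree \(t^a-1\). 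Then each \(\chi_\pm\) has inertia subgroup \(S\) in \(\gen{S,\phi}\nor G\), so \(\chi_\pm^{\gen{S,\phi}}\) is irreducible of degree \(q(t^a\pm1)\); since \(\Delta(\gen{S,\phi})\) is a subgraph of \(\Delta(G)\), and every prime of \(\pi_+\) (resp.\ \(\pi_-\)) divides \(t^a+1\) (resp.\ \(t^a-1\)), these two degrees witness adjacency of \(q\) to all of \(\pi(S)-\{t\}\).

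The existence of such moved characters is the crux, and is where I expect the real work to lie. The principal- and discrete-series characters of \(\PSL{t^a}\) are parametrised, modulo the Weyl involution (inversion), by the characters of the split and non-split maximal tori, of orders \((t^a-1)/d\) and \((t^a+1)/d\) with \(d=\gcd(2,t^a-1)\), and \(\phi\) acts on these parameters as the Frobenius \(x\mapsto x^{t^{a/q}}\). A parameter is \(\phi\)-fixed (up to inversion) precisely when its order divides \(t^{a/q}-1\), respectively \(t^{a/q}+1\); as these are \emph{proper} divisors of \(t^a\mp1\) (for the non-split torus one uses that \(q\) is odd, so \(t^{a/q}+1\mid t^a+1\)), parameters of strictly larger order exist and supply the moved characters, \emph{provided \(t^a\) is not too small}. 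I would dispose of the finitely many small configurations by direct inspection; note that \(\PSL{4}\cong A_5\) is genuinely exceptional (there \(x\mapsto x^{2}\) coincides with inversion on the order-\(3\) torus, so the degree-\(5\) character is fixed and \(2\not\sim 5\) in \(S_5\)), which shows the statement really needs \(q\ne t\). In the application to Theorem~C the relevant prime \(p=q\) is always distinct from \(t\), so this costs nothing.

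For part (b), assume \(t^a>5\). If \(t\nmid|G:S|\), I would show \(t\) is isolated: any \(\chi\in\irr G\) with \(t\mid\chi(1)\) lies over some \(\theta\in\irr S\), and since \(\theta\) extends to its inertia group, \(\chi(1)/\theta(1)=|G:I_G(\theta)|\) divides \(|G:S|\), which is coprime to \(t\); hence \(t\mid\theta(1)\), forcing \(\theta(1)=t^a\) by the degree list, so \(\theta\) is Steinberg. Being \(\aut S\)-invariant, Steinberg extends to \(G\), and Gallagher's theorem gives \(\chi(1)=t^a\beta(1)=t^a\) because \(\beta\in\irr{G/S}\) is linear (\(G/S\) abelian). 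Thus every degree of \(G\) divisible by \(t\) equals the \(t\)-power \(t^a\), so \(t\) has no neighbour. Conversely, if \(t\mid|G:S|\) then \(t\mid a\) and \(G\) contains a field automorphism of order \(t\); by the same Frobenius/fusion analysis as in part (a) (with \(t^a>5\) excluding the exceptional \(\PSL{4}\); for \(t=2\) one argues with the split torus), it moves a semisimple character, and the resulting induced degree \(t(t^a\pm1)\) joins \(t\) to a prime of \(\pi_+\cup\pi_-\). Since \(\Delta_{\pi_+\cup\pi_-}\) is connected by Lemma~\ref{PSL2}, \(\Delta(G)\) is connected, which completes the plan.
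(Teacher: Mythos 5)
Two preliminary remarks for context: the paper does not actually prove this lemma --- part (a) is quoted verbatim from Theorem~A of~\cite{W1} and part (b) is Theorem~2.7 of~\cite{LW} --- so your reconstruction from the character table of $\PSL{t^a}$ and the action of field automorphisms on torus parameters is doing substantially more work than the source, and its overall strategy (Clifford theory applied to a semisimple character moved by a field automorphism of order $q$, Theorem~\ref{MoretoTiep} for $q\notin\pi(S)$, Lemma~\ref{PSL2} for $q=2$ with $t$ odd) is the right one; the count of $\phi$-fixed torus parameters via the two subgroups of orders $\gcd(m,t^{a/q}\pm 1)$ is sound. Your observation that $\PSL{4}$ with $q=t=2$ is genuinely exceptional is correct and worth flagging: in $S_5\cong{\rm P\Gamma L}_2(4)$ the vertex $2$ is not adjacent to $5$, so part (a) as literally stated fails there (the paper never invokes it in that configuration, since condition (c) of Theorem~\ref{LewisWhite} keeps $t$ away from $|G:KR|$ whenever $t^a\neq 4$).

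That said, two steps would fail as written. First, in part (b) for $t=2$ you conclude ``since $\Delta_{\pi_+\cup\pi_-}$ is connected by Lemma~\ref{PSL2}, $\Delta(G)$ is connected''; but for $t=2$ Lemma~\ref{PSL2}(a) says precisely that $\Delta_{\pi_+}$ and $\Delta_{\pi_-}$ are \emph{distinct} connected components, so $\Delta_{\pi_+\cup\pi_-}$ is disconnected and joining $2$ to a single prime of $\pi_+\cup\pi_-$ does not connect the graph. You must move a character of degree $2^a+1$ \emph{and} one of degree $2^a-1$, so that $2$ becomes adjacent to both $\pi_+$ and $\pi_-$; your parenthetical ``for $t=2$ one argues with the split torus'' produces only one of the two required adjacencies, although the two-torus count you already perform in part (a) supplies both for $a\geq 4$. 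Second, the case division in part (a) silently omits $q=t$: the branch ``$q=2$'' assumes $t$ odd, and ``odd $q$ dividing $a$'' covers $q=t$ only for $t$ odd, so the configuration $q=t=2\mid a$ with $a\geq 4$ (where the conclusion is true) is not handled. Finally, the ``standard fact'' that every irreducible character of $\PSL{t^a}$ extends to its inertia subgroup in $\aut S$ is false for $t^a=9$: the invariant degree-$10$ character of $A_6$ does not extend to $\aut{A_6}$, which is why $20\in\cd{\aut{A_6}}$. You do not actually need it where you invoke it --- $\chi(1)/\theta(1)$ divides $|G:S|$ unconditionally by Clifford theory, and the Steinberg character does extend to $\aut S$ --- but it should not be asserted in that generality.
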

\begin{proof}
 Claim (a) follows from Theorem~A of~\cite{W1}, and (b) is Theorem~2.7 of~\cite{LW}.
\end{proof}

\begin{proof}[Proof of Theorem C]
  Let us first assume that $\Delta(G)$ is a disconnected graph and that it has a  cut-vertex $p$.
  We recall that, by the main theorem of \cite{MSW}, $\Delta(G)$ has at most three connected components. If $\Delta(G)$ has three connected components then, by Theorem~\ref{LewisWhite},  $G=\text{SL}_2(2^a) \times A$, where $A$ is an
  abelian group and $a$ is an integer larger than \(1\). In this situation, the three connected components of $\Delta(G)$ are complete
  graphs with vertex sets $\{2\}$, $\pi(2^a+1)$ and $\pi(2^a-1)$, and this is against the existence of a cut-vertex in $\Delta(G)$.
  Thus, we deduce that $\Delta(G)$ has two connected components.  By Theorem~\ref{LewisWhite}, $G/K$ is an  abelian group and  there
  is a   normal subgroup $N \leq  K$ of $G$  such that  $K/N\cong \PSL{t^a}$ with $t^a\geq 4$; by Remark~\ref{verticesdisc}, $\V G = \pi(G/R)$.
  Moreover,  if $N>1$, then  either $K\cong \SL{t^a}$ or there exists a minimal  normal subgroup $L$ of $G$, $L \leq N$,  such
  that $K/L\cong \SL{t^a}$ and $L$ is isomorphic to the natural module for $K/L$.
  In the last case, as the natural module for $\SL{t^a}$ is self-dual, the stabilizer of any non-trivial irreducible character of $L$  is a Sylow $t$-subgroup of $K$,   and hence  Clifford theory and Lemma~\ref{W} yield that $\Delta(G) - t$ is a
  complete graph, which is again impossible by the assumption of the existence of a cut-vertex in $\Delta(G)$.
  Hence, either $N= 1$ and  $K \cong {\rm PSL}_2(t^a)$ or  $|N| = 2$ and  $K\cong \SL{t^a}$.
  Let now $C$ be a subgroup of $G$ such that $C/N = \cent{G/N}{K/N}$. Clearly, $R \subseteq C$; more precisely, $R \subseteq \cent GK$, because
  $[K, R] \subseteq N$, so $[K, R, K] = 1 = [R,K, K]$ and hence, recalling that $K$ is a perfect group, by the Three Subgroups lemma,
  $[K, R] = [K, K, R] = 1$.  Conversely, as $K/N$ is the only non-solvable composition factor of $G$, $C$ is a solvable normal subgroup
  of $G$, so $C \subseteq R$ and hence $C = R$.
  Therefore, Theorem~\ref{LewisWhite} yields that if $t^a > 5$, then  $t$ does not divide $|G:KR|$.
If $t \neq 2$, then by  Lemma~\ref{PSL2} and Lemma~\ref{W} the assumption that 
$p$ is a cut-vertex of $\Delta(G)$ yields that  $|G:KR|$ is necessarily a power of $p$ and that  $p=2$.
Moreover, again by the assumption that $p = 2$ is a cut-vertex of $\Delta(G)$,  none of $t^a+1$ and $t^a-1$ can be a power of $2$, and hence by~\cite[Proposition 3.1]{MW}  $t^a \neq 9$ and $t^a$  is neither a Fermat nor a Mersenne prime. (In particular, $t^a \neq 5$.) 
If $t=2$ and $t^a \neq 4$, then  Lemma~\ref{PSL2} implies that $G \neq KR$ 
and, together with Lemma~\ref{W}, that  $|G:KR|\neq 1$ is a power of $p\neq 2$.  
Finally, if $t^a= 4$, then $K$ is isomorphic to  $\SL{4}$ and
$\V G = \pi(G/R) = \{2,3,5\}$ and, since $\Delta(G)$ is disconnected, $\Delta(G)$ cannot have any cut-vertex.  

\smallskip
Conversely, we assume that  either  $K \cong \PSL{t^a}$ or
$K \cong \SL{t^a}$ (where \(t\) is a prime and \(t^a\geq 4\)),  $G/K$ is abelian,  $|G:KR| = p^b$ (with $b \geq 0$), and that either (a) or (b) holds. 
Suppose first that $t=2$. Let $\chi$ be an irreducible character of $G$ such that $\chi(1)$ is even and let $\psi$ be an
irreducible constituent of $\chi_{K \times R}$. Then $\psi = \alpha \times \beta$, where $\alpha \in \irr K$  and $\beta \in \irr R$; note that $\psi(1)$ is even as $|G:KR|$ is odd. Since $R$ (which is isomorphic to a subgroup of $G/K$) is abelian, we see that  $\alpha(1)$ is even and
hence $\alpha$ is the Steinberg character of $K$; in particular, $\alpha$ is $G$-invariant and  $\alpha(1) = 2^a$. 
As $G/K$ is abelian, also $\beta$ is $G$-invariant  and hence $\psi$ is $G$-invariant.
Now, $G/KR$ is cyclic because it is isomorphic to a subgroup of the outer automorphism group of $K$;  hence, by Corollary 11.22 of~\cite{Is}, $\psi$ extends to $G$ and,  by Gallagher's theorem,   $\chi(1)$ is a power of $2$. We deduce that $2$ is an isolated vertex of $\Delta(G)$ and, in particular, $\Delta(G)$ is  disconnected.
Moreover, by part (a) of Lemma~\ref{W} the vertex $p$ is adjacent in $\Delta(G)$ to all the vertices in $\V G - \{2, p\}$, so $p$ is a
cut-vertex of $\Delta(G)$.

Finally, we assume that $t$ is odd. We observe that, by the same argument we used above, $R/N = \cent {G/N}{K/N}$; hence, by Theorem~\ref{LewisWhite}, $\Delta(G)$ is disconnected.
Moreover, by Lemma~\ref{PSL2} and the fact that  $\Delta(K)$ is a subgraph of $\Delta(G)$,   $p=2$ is adjacent in $\Delta(G)$ to all
vertices of $\V G - \{ 2, t\}$. 
Let us now assume, working by contradiction,  that there is a character  $\chi \in \irr G$ such that $qs$ divides $\chi(1)$
for some odd primes $q\in \pi(t^a-1)$ and $s\in \pi(t^a+1)$. Let $\theta\in \irr{KR}$ be an irreducible constituent of $\chi_{KR}$.
Since $|G:M|$ is power of $2$, then $qs$ necessarily divides $\theta(1)$.
Observe that $N$, being a cyclic group of order at most $2$,  is central in $G$ and hence $KR = KD \times H$, where
$D\in \syl 2 R$ and $H$ is the abelian $2$-complement of $R$.
It follows that $qs$ divides the degree of any irreducible constituent $\alpha$ of $\theta_K$, and this is a contradiction by
Lemma~\ref{PSL2}. 
Hence, no vertex of $\pi(t^a+1) - \{ 2\}$ is adjacent to any vertex of $\pi(t^a+1) - \{ 2\}$ and we conclude that $\Delta(G) = \Delta(K)$.
Finally, by (a) both  $\pi(t^a-1) -\{2\}$ and $\pi(t^a+1) -\{2\}$ are  non-empty sets of vertices of $\Delta(G)$  and
hence $2$ is a cut-vertex of $\Delta(G)$. The proof is complete.  
\end{proof}

\end{document}